\documentclass[11pt, reqno]{amsart}
\usepackage{mathrsfs}
\usepackage[active]{srcltx}
\usepackage{mathrsfs,amsmath}
\usepackage{mathtools}
\usepackage{longtable}
\usepackage{todonotes}
\usepackage{amssymb}
\usepackage{cite}
\allowdisplaybreaks

\usepackage{xcolor}
\definecolor{bluecite}{HTML}{0875b7}

\usepackage[unicode=true,
bookmarksopen={true},
pdffitwindow=true,
colorlinks=true,
linkcolor=bluecite,
citecolor=bluecite,
urlcolor=bluecite,
hyperfootnotes=false,
pdfstartview={FitH},
pdfpagemode= UseNone]{hyperref}

\newcommand{\ler}[1]{\left(#1\right)}
\newcommand{\dd}{\mathrm{d}}
\newcommand{\cT}{\mathcal{T}}

\newcommand{\be}{\begin{equation}}
\newcommand{\ee}{\end{equation}}

\newcommand{\potmu}{\cT_{\mu}^{(p)}}

\newcommand{\potphimu}{\cT_{\Phi(\mu)}^{(p)}}

\newcommand{\Rn}{\mathbb{R}^n}

\newcommand{\dwp}{d_{\mathcal{W}_p}}

\newcommand{\supp}{\mathrm{supp}}

\newcommand{\Wp}{\mathcal{W}_p}

\newcommand{\R}{\mathbb{R}}

\newcommand{\wprnn}{\mathcal{W}_p(\mathbb{R}^n,d_N)}
\newcommand{\wornn}{\mathcal{W}_1(\mathbb{R}^n,d_N)}
\usepackage[left=3cm,right=3cm,top=3.0cm,bottom=3.0cm]{geometry}
\usepackage{marginnote}
\usepackage[normalem]{ulem}

\newtheorem{theorem}{Theorem}[section]
\newtheorem{example}[theorem]{Example}
\newtheorem{proposition}[theorem]{Proposition}
\newtheorem{lemma}[theorem]{Lemma}
\newtheorem{corollary}[theorem]{Corollary}

\newtheorem{remark}[theorem]{Remark}
\numberwithin{equation}{section}

\usepackage{bbm}  
\usepackage{dsfont}

\subjclass[2020]{
46B20 
49Q22 
54E40
}
\keywords{Isometries, Wasserstein Space, Normed space}

\title[Rigidity over $\Rn$ with smooth norms]{
 Wasserstein Rigidity over $\Rn$ with smooth norms}

\author[Zolt\'an M. Balogh]{Zolt\'an M. Balogh}
\address{Zolt\'an M. Balogh, Universit\"at Bern\\ Mathematisches Institut (MAI)\\ Sidlerstrasse 12\\ 3012 Bern\\ Schweiz}
\email{zoltan.balogh@unibe.ch}

\author[Eric Str\"oher]{Eric Str\"oher}
\address{Eric Str\"oher, Universit\"at Bern\\ Mathematisches Institut (MAI)\\ Sidlerstrasse 12\\ 3012 Bern\\ Schweiz}
\email{eric.stroeher@unibe.ch}

\author[Tam\'as Titkos]{Tam\'as Titkos}
\address{Tam\'as Titkos, Corvinus University of Budapest, Department of Mathematics \\
Fővám tér 13-15 \\ Budapest 1093 \\ Hungary\\ and HUN-REN Alfr\'ed R\'enyi Institute of Mathematics\\ Re\'altanoda u. 13-15.\\
Budapest 1053\\ Hungary}
\email{tamas.titkos@uni-corvinus.hu}

\author[D\'aniel Virosztek]{D\'aniel Virosztek}
\address{D\'aniel Virosztek, HUN-REN Alfr\'ed R\'enyi Institute of Mathematics\\ Re\'altanoda u. 13-15.\\Budapest H-1053\\ Hungary}
\email{virosztek.daniel@renyi.hu}

\thanks{Z. M. Balogh and E. Str\"oher are supported by the Swiss National Science Foundation, Grant Nr. {200020\_228012}.}

\thanks{T. Titkos is supported by the Hungarian National Research, Development and Innovation Office (NKFIH) under grant agreements no. K134944 and no. Excellence\_151232, and by the Momentum program of the Hungarian Academy of Sciences under grant agreement no. LP2021-15/2021.}

\thanks{D. Virosztek is supported by the Momentum program of the Hungarian Academy of Sciences under grant agreement no. LP2021-15/2021, by the Hungarian National Research, Development and Innovation Office (NKFIH) under grant agreement no. Excellence\_151232, and partially supported by the ERC Synergy Grant No. 810115.}

\begin{document}
	\begin{abstract}
We study $p-$Wasserstein spaces $ \Wp(\Rn, d_N)$  over $\Rn$ equipped with a norm metric $d_N$. We show that, if the norm is smooth enough, then the Wasserstein space is isometrically rigid whenever $p \neq 2$. We also show that, even when $p=2$, we can recover the isometric rigidity of the Wasserstein space  $\mathcal{W}_2(\Rn, d_N)$ when $N$ is an $l_q-$norm and $q>2$. 
 \end{abstract}
	\maketitle 
	\tableofcontents
	

\section{Introduction and main results}
\label{sec:intro}
For a fixed $p\geq1$ and a complete separable metric space $(X, d_X)$, the $p$-Wasserstein space $\Wp(X, d_X)$ is the set of Borel probability measures with finite $p$-moment, equipped with the so-called Wasserstein  distance $d_{\Wp}$. More precisely, we say that a Borel probability measure $\mu$ is in the $p$-Wasserstein space if 
$$
\int_X d^p_X(x, x_0)d\mu(x)<\infty
$$
for some (and hence any) $x_0 \in X$, and the Wasserstein distance is given by 
$$
d_{\mathcal{W}_p}(\mu, \nu)=\inf_{\pi \in \Pi (\mu, \nu)}\left\lbrace \left( \iint_{X \times X} d_X^p(x, y)d\pi(x,y)\right)^{\frac{1}{p}} \right\rbrace, 
$$
where $\Pi(\mu, \nu)$ is the set of couplings between $\mu$ and $\nu$, i.e. the set of probability measures $\pi \in \mathcal{P}( X \times X)$ with $p_{1 \#}\pi=\mu$ and $p_{2 \#}\pi=\nu$, for the projection operators $p_1(x, y)=x$ and $p_2(x, y)=y$; here $p_{\#}$ denotes the push-forward operation, which we define below.

The Wasserstein space $\Wp(X, d_X)$ inherits a number of properties from its base space $(X, d_X)$, see \cite{AG, Figalli, V, Villani} for an overview. For example, if $(X, d_X)$ is complete and separable, then so is $\Wp(X, d_X)$. 

Another property of any Wasserstein space is that it contains an isometric copy of its base space. Indeed, the map $x \to \delta_x$ is an isometric embedding from the space $(X, d_X)$ to $\Wp(X, d_X)$, where $\delta_x$ is the Dirac mass located at $x \in X$. Furthermore, the convex hull of Dirac masses (i.e. the set of measures with finite support) is dense in $\Wp(X,d_X)$ (see e.g. \cite[Theorem 6.18]{Villani}). If $\phi$ is an isometry (i.e. a distance-preserving and bijective map) on $(X, d_X)$, it induces an isometry of $\Wp(X, d_X)$ by the push-forward operation. More precisely $\phi_{\#}: \Wp(X, d_X) \to \Wp(X, d_X)$ defined by $$\phi_{\#}(\mu)(A) = \mu (\phi^{-1}(A))\quad\mbox{for}\quad A\subset X$$ is an isometry of $\Wp (X, d_X)$. An isometry $\Phi$ of the Wasserstein space that can be written as $\Phi=\phi_{\#} $ is called a trivial isometry. If the isometry group of $\Wp (X, d_X)$ contains only trivial isometries, we say that the Wasserstein space is isometrically rigid. 

In \cite{K}, Kloeckner showed that for $p=2$, the space $\mathcal{W}_2(\Rn, d_E)$ (where $d_E$ is the usual Euclidean distance) is not rigid. That is, its isometry group contains non-trivial isometries, called shape preserving isometries when $n\geq 2$.
Another example of a non-rigid space was given in \cite{GTV1}, where Geh\'er, Titkos and Virosztek showed that $\mathcal{W}_1([0,1], |\cdot |)$ admits non-trivial, so-called mass splitting isometries.

In contrast, Bertrand and Kloeckner showed in \cite{BK, BK2} that if $(X, d_X)$ is Hadamard, i.e. a Riemannian manifold with negative sectional curvature, then $\mathcal{W}_2(X, d_X)$ is isometrically rigid. 
Furthermore Santos-Rodriguez proved in \cite{S-R} that the same is true when $(X, d_X)$ is a manifold with strictly positive sectional curvature.  Rigidity in the case of the subriemannian Heisenberg group was shown by Balogh, Titkos and Virosztek  in  \cite{BTV}; in \cite{BKTV}, the same authors together with Kiss showed that the space $\mathcal{W}_p(\R^2, d_{\max})$ is isometrically rigid for any $p \geq 1$, where $d_{\max}$ is the distance induced by the maximum norm on the plane. This shows that there is an abundance of metric spaces where the associated $p$-Wasserstein space is rigid. 

On the other hand, there is also an abundance of non-rigid Wasserstein spaces, as shown in the recent result by Che, Galaz-Garcia, Kerin and Santos-Rodriguez, who proved in \cite{S-R2} that for any Hilbert space $(H, d_H)$ and any proper metric space $(Y, d_Y)$,  $\mathcal{W}_2(H \times Y, d_{H\oplus_{2} Y})$ is not rigid.
In this paper, we study the isometric rigidity of the Wasserstein spaces $\Wp(X, d_X)$ for the class of general normed spaces of $(\Rn, d_N)$.

We recall that in \cite{GTV2}, Geh\'er, Titkos and Virosztek showed that, if $(\Rn, d_H)$ is a Hilbert space with $d_H$ the distance induced by a scalar product, then for any $p\geq1$ with $p\neq 2$, the space $\Wp(\Rn, d_H)$ is isometrically rigid. We generalize their result in the following way: instead of requiring that the norm comes from an inner product, we show that only the smoothness of the norm is enough to ensure the isometric rigidity of the Wasserstein space. Our first theorem goes as follows: 

\begin{theorem}
\label{thm1}
If $N: \Rn \to \R_{+}$ is a strictly convex norm that is $C^{2}$-smooth, then the Wasserstein space $\Wp(\Rn,d_N)$ is isometrically rigid for all $p\in [1, \infty)$, $p \neq 2$.
\end{theorem}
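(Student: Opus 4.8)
The plan is to follow the by-now standard strategy for isometric rigidity of Wasserstein spaces: first reduce --- using that isometries of the base normed space are affine --- to the case where the isometry fixes every Dirac mass, and then show that such an isometry must be the identity, by a density argument through finitely supported measures driven by the potential functions $\potpmu(z) := \dwpp{\delta_z}{\mu} = \int_{\Rn} N(z-w)^p\,\dd\mu(w)$.

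\emph{Step 1 (Dirac masses are preserved).} Let $\Phi$ be an isometry of $\wprnn$. Because $N$ is a norm, every surjective isometry of $(\Rn,d_N)$ is affine by the Mazur--Ulam theorem, i.e. $x\mapsto Ax+b$ with $A$ linear and $N$-preserving. The heart of this step is that $\Phi$ maps the set $\Delta=\{\delta_x:x\in\Rn\}$ onto itself, which I would establish through an intrinsic metric characterisation of Dirac masses that genuinely uses the hypotheses on $N$. Strict convexity and $C^2$-smoothness make $(\Rn,d_N)$ a uniquely geodesic space whose geodesics are the affine segments (prolongable to bi-infinite geodesics); for $p>1$ this forces the cost $N(x-y)^p$ to be strictly convex, hence optimal plans and displacement-interpolation geodesics in $\wprnn$ are unique, and one can then single out $\delta_x$ by a local/infinitesimal feature of this structure (roughly: near $\delta_x$ the space looks like the $n$-dimensional normed space $(\Rn,d_N)$, but is strictly richer near any non-Dirac $\mu$). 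Once $\Phi(\Delta)=\Delta$, the map $x\mapsto\Phi(\delta_x)$ is an isometry of $(\Rn,d_N)$, hence affine; composing $\Phi$ with the inverse of the trivial isometry it induces, we may assume $\Phi(\delta_x)=\delta_x$ for all $x\in\Rn$.

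\emph{Step 2 (fixing all Diracs forces the identity).} Now $\Phi$ preserves all distances to Dirac masses, so $\potpphimu=\potpmu$ as functions on $\Rn$ for every $\mu\in\wprnn$. Since finitely supported measures are dense and $\Phi$ is continuous, it suffices to show $\Phi$ fixes every finitely supported measure, which I would prove by induction on the number of atoms, the base case being Step~1. The decisive case is two atoms, $\mu=\lambda\delta_x+(1-\lambda)\delta_y$: one must show that $\mathcal{T}^{(p)}_{\nu}=\potpmu$ forces $\nu=\mu$. This is exactly where $p\neq2$ enters. For $1\le p<2$, the map $w\mapsto N(z-w)^p$ fails to be twice differentiable precisely at $w=z$, so the atoms of $\mu$ are recovered as the singular set of $\potpmu$ and the weights from the strength of the singularities; the remaining ``no extra mass'' assertion reduces to a strict-definiteness (modulo constants) property of the kernel $(x,y)\mapsto N(x-y)^p$, again a consequence of the smoothness and strict convexity of $N$. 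For $p>2$ one argues similarly using the higher-order (fractional) non-smoothness of $z\mapsto N(z)^p$. The point is that for $p=2$ over a Hilbert space the potential degenerates into a low-degree polynomial recording only the barycenter and finitely many moments --- Kloeckner's mechanism of non-rigidity --- so the argument must, and does, exploit $p\ne2$. The inductive step then peels off one atom at a time by the same potential analysis, and density plus continuity finish the proof.

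\emph{Where the difficulty lies.} I expect the two genuinely hard points to be: (i) the intrinsic metric characterisation of Dirac masses, which must exploit $C^2$-smoothness and strict convexity rather than mere completeness/separability, and be phrased in a way that is manifestly isometry-invariant; and (ii) reconstructing $\mu$ from $\potpmu$ for \emph{all} $p\ne2$ and all strictly convex $C^2$ norms --- understanding precisely which regularity of $z\mapsto N(z)^p$ survives, how it encodes $\mu$, and in particular handling the borderline even-integer exponents, where the naive potential argument loses information. Finally, $p=1$ needs separate care, since $\mathcal{W}_1$ is not uniquely geodesic, so the geodesic-based ingredients of Step~1 must be replaced by a direct or limiting argument.
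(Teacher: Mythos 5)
Your Step 1 is in the right spirit but is not yet an argument: the paper's actual criterion is concrete and elementary --- $\mu$ is a Dirac mass iff for every $\nu\neq\mu$ there is an $\eta$ with $d_{\Wp}(\mu,\nu)+d_{\Wp}(\nu,\eta)=d_{\Wp}(\mu,\eta)$ (proved via a dilation for one direction and saturation of Minkowski's inequality for the other; for $p=1$ strict convexity replaces Minkowski). Your proposed ``near $\delta_x$ the space looks like $(\Rn,d_N)$'' characterisation is not developed and is not obviously isometry-invariant in a usable form, though this part could likely be repaired.

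The genuine gap is in Step 2 for $p>2$. You propose to recover $\mu$ from $\potpmu$ via ``higher-order (fractional) non-smoothness'' of $z\mapsto N(z)^p$, but this cannot work under the stated hypotheses: the norm is only assumed $C^2$, so no Taylor expansion of order $\lceil p\rceil>2$ is available, and for even integers $p$ the function $N^p$ can be perfectly smooth (indeed polynomial for suitable norms), so there is no singularity at the atoms to detect. You flag the even-integer case as a difficulty but do not resolve it, and the paper even exhibits a norm for which $\potpmu=\mathcal{T}^{(p)}_\nu$ does \emph{not} imply $\mu=\nu$, so the ``strict definiteness of the kernel'' you invoke is false in general. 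The paper's actual route for $p>2$ is entirely different: it uses the $(p-2)$-homogeneous Hessian of $N^p$ to find directions $v_1,v_2$ for which $x\mapsto v_2^{\top}(\mathrm{Hess}\,N^p)(x)v_1$ is non-negative, non-constant and vanishes somewhere; this produces a proper linear subspace $L$ such that measures supported on translates of $L$ are mapped to measures supported on the same translates. Rigidity then follows by induction on the dimension, starting from the one-dimensional result of Geh\'er--Titkos--Virosztek, together with a separate ``dimension upgrading'' proposition (itself nontrivial, since the preimage $P_L^{-1}(0)$ of the metric projection need not be a linear subspace). None of this machinery --- subspace invariance, projection commutation, induction on $n$ --- appears in your proposal, and without it the $p>2$ case does not close. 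Your $p<2$ argument, by contrast, matches the paper's second-difference computation $\lim_{h\to0}\bigl(\potpmu(x+h)-2\potpmu(x)+\potpmu(x-h)\bigr)/\bigl(2N^p(h)\bigr)=\mu(\{x\})$ and is essentially correct.
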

Here we say that the norm $N$ is $C^2$-smooth if at any point $x \in \Rn \setminus \lbrace 0 \rbrace$, $N$ is (at least) twice differentiable at $x$. 

Our proof will be done in three steps. We first prove a geometric characterization of Dirac masses, which will in particular imply that for any $x \in \Rn$ and any isometry of the Wasserstein space $\Phi$, there exists $y \in \Rn$ such that $\Phi(\delta_x)=\delta_y$. 
In our second step we show a dimension upgrading result, which says that if an isometry $\Phi$ acts trivially on measures supported on certain subspaces of $\Rn$, then $\Phi$ has to act trivially on measure supported on the whole $\Rn$.
In the third step we show that, using the $C^2$-smoothness of the norm, we can find a proper subspace $L$ such that, if a measure is supported on $L$, then so is its image by an isometry $\Phi$. Finally, we combine these steps and, with an induction argument on the dimension of $\Rn$, we can prove the isometric rigidity of the Wasserstein space $\mathcal{W}_p(\Rn, d_N)$.   

We cannot expect this result to hold in general for $p=2$. Indeed, as mentioned above Kloeckner \cite{K} and Che, Galaz-Garcia, Kerin and Santos-Rodriguez \cite{S-R2} gave examples of Wasserstein spaces over certain special types of normed spaces in $\Rn$ which allowed shape-preserving isometries. 

In our proof of Theorem \ref{thm1}, we only need the condition $p \neq 2$ for certain steps; we can thus adapt the proof when the norm $N$ is an $l_q$-norm to show isometric rigidity in the case $p=2$, where the $l_q$-norms are given by $N_q(x)=\left( \sum_{i=1}^n |x_i|^q \right)^{\frac{1}{q}}$. Specifically, we prove the following theorem:

\begin{theorem}
\label{thm2}
If $q >2$ and $d_q$ is the distance function induced by the $l_q$-norm, then the Wasserstein space $\mathcal{W}_2(\Rn, d_q)$ is isometrically rigid. 
\end{theorem}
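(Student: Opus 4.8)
The plan is to re-run, with $N=N_q$ and $p=2$, the three-step scheme behind Theorem~\ref{thm1}, keeping careful track of which steps are insensitive to the value of $p$ and replacing the one place where $p\neq 2$ was genuinely used. First, for $q>2$ the $l_q$-norm $N_q$ is strictly convex, and since $t\mapsto|t|^q$ is $C^2$ on $\R$ with continuous second derivative $q(q-1)|t|^{q-2}$ (here $q-2>0$), $N_q$ is $C^2$-smooth away from the origin; moreover its restriction to a coordinate subspace is again an $l_q$-norm of the same type in lower dimension. Thus $N_q$ satisfies the hypotheses of Theorem~\ref{thm1}, and two of the three building blocks carry over unchanged: the geometric characterization of Dirac masses (Step~1) is purely metric, hence valid for every $p\in[1,\infty)$, so any isometry $\Phi$ of $\mathcal{W}_2(\Rn,d_q)$ sends Diracs to Diracs, induces an isometry $\psi$ of $(\Rn,d_q)$, and, after composing with $(\psi^{-1})_{\#}$, may be assumed to fix every Dirac; and the subspace-finding step (Step~3) uses only $C^2$-smoothness, so it supplies a proper subspace $L\subsetneq\Rn$ — which for $N_q$ can be taken to be a coordinate subspace, so that $d_q|_L$ is again an $l_q$-norm — such that $\Phi$ maps measures supported on $L$ to measures supported on $L$, the restricted map being an isometry of $\mathcal{W}_2(L,d_q|_L)$ still fixing all Diracs.

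Second, I would pin down exactly where $p\neq 2$ enters the proof of Theorem~\ref{thm1}: it is used to rule out shape-preserving isometries. When $p=2$ and the norm is Euclidean, $\mathcal{W}_2(\Rn,d_E)$ splits isometrically as a metric product of the barycentre map $\mu\mapsto b_\mu:=\int x\,\dd\mu(x)$ and the centred part $\mu\mapsto(x\mapsto x-b_\mu)_{\#}\mu$, and letting a linear isometry of $\Rn$ act on the second factor produces a non-trivial isometry fixing all Diracs; already on $\R$ this is the exotic involution $\mu\mapsto(x\mapsto 2b_\mu-x)_{\#}\mu$. This Pythagorean splitting rests on the identity $N(v+w)^2=N(v)^2+2\langle v,w\rangle+N(w)^2$ and so fails for $N_q$ with $q\neq 2$: one checks on a simple pair of two-point measures with non-collinear support that $\mathcal{W}_2(\Rn,d_q)$ is \emph{not} a metric product of barycentre and centred parts when $q>2$. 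The $p\neq 2$ argument of Theorem~\ref{thm1} must therefore be replaced by a direct computation exploiting this failure of quadraticity: one shows that an isometry that fixes all Diracs and preserves the measures supported on a given line can, on that line, be only the identity or the barycentric reflection, and that the reflection alternative becomes impossible as soon as a second, non-parallel line is present — which is precisely what $q>2$ together with $n\geq 2$ makes available.

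Finally, these pieces would be fed into the same induction on $n$ as in Theorem~\ref{thm1}: Step~3 reduces rigidity of $\mathcal{W}_2(\Rn,d_q)$ to rigidity over lower-dimensional $l_q$-spaces, the dimension-upgrading result (Step~2) then propagates triviality of $\Phi$ from those subspaces to all of $\Rn$, and the base of the induction is treated by the direct $l_q$-computation above. The case $n=1$ is genuinely outside the scope of such an argument — there $d_q$ is simply $|\cdot|$ and $\mathcal{W}_2(\R,|\cdot|)$ is not rigid, the involution $\mu\mapsto(x\mapsto 2b_\mu-x)_{\#}\mu$ being a non-trivial isometry — which is exactly the phenomenon that the hypothesis $p\neq 2$ suppresses in Theorem~\ref{thm1}; accordingly the induction is based at $n=2$. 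I expect the main obstacle to be precisely this replacement step: one has to quantify how far the $l_q$-norm with $q>2$ is from a quadratic form and convert that into the exclusion of \emph{all} exotic isometries on the low-dimensional building blocks, not merely of the obvious shape-preserving candidates.
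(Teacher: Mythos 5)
Your overall architecture matches the paper's: keep the purely metric Dirac characterization, find a coordinate subspace that $\Phi$ must preserve, rule out exotic behaviour on a line by a computation that uses the ambient $l_q$ geometry, and then run the induction on $n$ through the dimension-upgrading proposition (Proposition \ref{prop: Hyper implies hyper plus}), based at $n=2$. However, there are two genuine gaps. First, your claim that the subspace-finding step ``uses only $C^2$-smoothness, so it carries over'' is false: that step (Proposition \ref{prop: non-constant derivative} and the construction of $T$) relies on the $(p-2)$-homogeneity of $\mathrm{Hess}\,N^p$ both to extend information from the unit sphere to all of $\Rn\setminus\{0\}$ and to define $T(0)$, and for $p=2$ the conclusion itself fails for a perfectly smooth norm --- the Euclidean one, where rotations about the barycentre move measures off their supporting line (Remark \ref{rem: generap_p=2}). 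For $l_q$ with $q>2$ the subspace-preservation statement is rescued not by the general machinery but by the explicit formula $\partial_i^2N_q^2(x)=2(2-q)N^{2-2q}|x_i|^{2q-2}+2(q-1)N^{2-q}|x_i|^{q-2}$, which vanishes exactly on $\{x_i=0\}$; and since the resulting $T$ is $0$-homogeneous and hence has no continuous extension to the origin, one must first argue for absolutely continuous measures and then pass to the limit (this is Proposition \ref{prop: restrict_lq}). Your proposal asserts the conclusion without this norm-specific repair.

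Second, and more seriously, your claim that an isometry fixing all Diracs and globally preserving a line ``can, on that line, be only the identity or the barycentric reflection'' is incorrect. By Kloeckner's Lemma 5.2 \cite{K}, such an isometry acts on two-point measures supported on the line by $\mu(x,\sigma,p)\mapsto\mu(x,\sigma,sp+t)$ with $s\in\{-1,1\}$ and $t\in\R$: there is a whole one-parameter family of exotic ``shape-shearing'' isometries of $\mathcal{W}_2(\R,|\cdot|)$ beyond the reflection, all of which fix every Dirac. The main new computational content of the paper's proof of Theorem \ref{thm2} is precisely the exclusion of the $t\neq 0$ family, done by testing against a Dirac mass $\delta_{e_2}$ off the line and exploiting the strict concavity of $s\mapsto s^{2/q}$ for $q>2$ (a separate convexity computation then kills the reflection $s=-1$). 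Your sketch omits this family entirely, so the replacement of the $p\neq 2$ one-dimensional rigidity input --- the one step you correctly identify as the crux --- is not actually supplied.
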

This theorem only considers the case $q>2$; we will show that $\mathcal{W}_2(\Rn, d_q)$ is also isometrically rigid when $1 \leq q < 2$ in our upcoming paper \cite{STV}.

Our paper is structured in the following way: in the next section, we show the metric characterization of Dirac masses. In section \ref{sec:upgrading}, we prove the dimension upgrading proposition. In section \ref{sec:potential}, we will show Theorem \ref{thm1}, first when $p <2$, then for $p>2$; we finish the section by adapting the $p>2$-argument to show Theorem \ref{thm2}.

\section{Metric characterization of Dirac masses}
\label{sec:diracs}

When investigating the isometric rigidity  of Wasserstein spaces, a metric characterization of Dirac masses is very often a key tool. Indeed,  since isometries preserve distances, such a characterization shows that the image under an isometry of a Dirac mass is another Dirac mass.
If that is the case, then for an isometry $\Phi:\Wp(\mathbb{R}^n,d_N)\to\Wp(\mathbb{R}^n,d_N)$, the map $\psi:(\Rn, d_N)\to (\Rn, d_N)$ defined by $\Phi(\delta_x)=\delta_{\psi(x)}$ is an isometry of $\Rn$, and $(\Phi \circ \psi^{-1}_{\#})(\delta_x)=\delta_x$ for all $x \in \Rn$. 
 If we can show that $\Phi\circ\psi_{\#}^{-1}$ is the identity on $\Wp(\mathbb{R}^n,d_N)$, we can conclude that $\Phi=\psi_{\#}$ is a trivial isometry. Thus we will in the following sections sometimes assume that $\Phi(\delta_x)=\delta_x$, since this can be obtained by composition with the trivial isometry $\psi_{\#}^{-1}$.

In the special case where $N$ is a Hilbert norm, a metric characterization for Dirac mass has been found in \cite[Lemma 3.5]{GTV2}. As we will see, the same characterization holds for a wide class of norms.
In what follows, we say that a triple $(\mu,\nu,\eta)$ of measures in $\Wp(\Rn, d_N)$ is $\Wp$-aligned if $\mu, \nu$ and $\eta$ are distinct and
\begin{equation}\label{eq:aligned}\dwp(\mu, \nu)+\dwp(\nu, \eta)=\dwp(\mu, \eta).\end{equation}

In this section, we prove two statements for the characterization of Dirac masses; Proposition \ref{pbigger1} when $p>1$ for an arbitrary norm $N$, and Proposition \ref{prop:pequal1} when $p=1$ for a strictly convex norm $N$. We start by looking at the Wasserstein space $\mathcal{W}_p(\mathbb{R}^n, d_N)$ with $p>1$. Then the following proposition holds:
\begin{proposition}
\label{pbigger1}
Let $p>1$, and consider the $p$-Wasserstein space $\Wp(\mathbb{R}^n,d_N)$, where $N:\Rn \to \R_{+}$ is a norm on $\Rn$. For a measure $\mu \in\wprnn$, the following are equivalent:
\begin{enumerate}
\item $\mu$ is a Dirac mass, i.e. there exists an $x \in \mathbb{R}^n$ such that $\mu = \delta_x$.
\item For all $\nu \in \wprnn$, $\mu \neq \nu$, there exists an $\eta \in\wprnn$ such that the triple $(\mu,\nu,\eta)$ is $\mathcal{W}_p$-aligned. 
\end{enumerate}
\end{proposition}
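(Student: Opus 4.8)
The plan is to prove the two implications separately. The direction $(1)\Rightarrow(2)$ should be the easier one: given $\mu=\delta_x$ and any $\nu\neq\delta_x$, I want to produce $\eta$ making the triple $\Wp$-aligned. The natural candidate is to push $\nu$ "away from $x$ along rays emanating from $x$": for $t>1$ define the dilation $D_t^x(y) = x + t(y-x)$ and set $\eta = (D_t^x)_\#\nu$. Since all the mass of $\delta_x$ sits at $x$, the optimal coupling between $\delta_x$ and any measure $\rho$ is the only coupling, so $\dwp(\delta_x,\rho)^p = \int N(y-x)^p\,d\rho(y)$; this makes $\dwp(\delta_x, \eta) = t\cdot\dwp(\delta_x,\nu)$ by the scaling. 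It then remains to check $\dwp(\nu,\eta) = (t-1)\dwp(\delta_x,\nu)$: the coupling $y\mapsto(y, D_t^x(y))$ has cost $\left(\int N\bigl((t-1)(y-x)\bigr)^p\,d\nu(y)\right)^{1/p} = (t-1)\dwp(\delta_x,\nu)$, which is an upper bound, and the matching lower bound follows from the triangle inequality $\dwp(\nu,\eta)\geq \dwp(\delta_x,\eta)-\dwp(\delta_x,\nu)$. Hence \eqref{eq:aligned} holds with equality, and distinctness is clear for $t$ large. One should double-check $\eta\in\wprnn$ (finite $p$-moment is preserved by $D_t^x$) and that $\nu\neq\delta_x$ guarantees $\eta\neq\nu,\delta_x$.

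The substantial direction is $(2)\Rightarrow(1)$, equivalently the contrapositive: if $\mu$ is \emph{not} a Dirac mass, I must exhibit some $\nu\neq\mu$ for which \emph{no} $\eta$ aligns the triple. The idea is to pick a clever $\nu$ for which being aligned forces a rigidity on geodesics through $\mu$ that $\mu$ cannot satisfy unless it is a Dirac mass. A $\Wp$-aligned triple with $p>1$ means $\mu$, $\nu$, $\eta$ lie on a metric geodesic (since $\Wp(\Rn,d_N)$ is a geodesic space for $p>1$ and midpoints/constant-speed geodesics are essentially unique along displacement interpolation when $p>1$; strict convexity of $t\mapsto t^p$ plays the usual role here). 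Concretely, alignment means $\nu$ lies on a geodesic from $\mu$ to $\eta$, so $\nu$ is a displacement interpolant: there is an optimal plan $\sigma\in\Pi(\mu,\eta)$ with $\nu = \bigl((1-s)p_1 + s p_2\bigr)_\#\sigma$ for some $s\in(0,1)$, and moreover \emph{every} geodesic from $\mu$ through $\nu$ must continue to some such $\eta$. The strategy is: choose $\nu$ to be a tiny perturbation of $\mu$ that moves mass in "incompatible directions" — e.g. if $x_0, x_1\in\supp\mu$ are two distinct points, transport a bit of mass near $x_0$ in one direction and a bit near $x_1$ in a non-collinear direction — so that no single ambient affine structure lets the geodesic be prolonged past $\nu$. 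I expect the cleanest route is to take $\nu = ((\mathrm{id} + v)_\#\mu)$ for a suitable bounded vector field $v$ that is an optimal map (so $\dwp(\mu,\nu)^p = \int N(v)^p\,d\mu$), chosen so that $-v$ is \emph{not} extendable: geodesics in $\Wp$ for $p>1$ over a normed space are themselves transported along straight segments $t\mapsto z + t w(z)$, and prolongability past $\nu$ would require the segments to continue, which fails if $v$ is, say, supported so that two mass chunks would have to pass through the same point or leave the strict-convexity "allowed" set of directions.

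The main obstacle, and where I would spend the most care, is making the non-prolongability argument rigorous for a \emph{general} norm $N$ (Proposition \ref{pbigger1} assumes nothing beyond $N$ being a norm). The subtlety is that for non-strictly-convex $N$, optimal transport plans and geodesics need not be unique, so "the geodesic through $\nu$ cannot be prolonged" must be argued for \emph{all} choices of optimal plans, not just one. I anticipate the argument proceeds by: (i) reducing to the structure of $\Wp$-geodesics over $(\Rn,d_N)$ via McCann-type interpolation, recalling that any geodesic is of the form $t\mapsto (\gamma_t)_\#\Gamma$ where $\Gamma$ is a measure on geodesics of $\Rn$ and geodesics of $(\Rn, d_N)$ between distinct points are line segments (this uses only that $N$ is a norm, hence that straight segments realize $d_N$); (ii) observing that if $\mu$ has at least two points $x_0\neq x_1$ in its support, one can choose $\nu$ supported so that any optimal plan from some $\eta$ onto $\mu$ refining the chosen geodesic would force mass arriving at $x_0$ and mass arriving at $x_1$ along segments whose backward extensions are constrained, and then exploit a local counting/volume or a support-location contradiction. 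If the fully general case turns out to be delicate, a safe fallback that still suffices for the paper's later use is to handle $p>1$ with strict convexity first (where geodesics are unique) and remark that the general norm case follows by the same interpolation picture; but I would first attempt the general statement as stated, since the only property genuinely used is that segments are the geodesics of $(\Rn,d_N)$.
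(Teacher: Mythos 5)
Your direction $(1)\Rightarrow(2)$ is essentially the paper's argument: dilate $\nu$ away from $x$ by $D^x_t(y)=x+t(y-x)$ (the paper fixes $t=2$), use that the coupling with a Dirac mass is unique to get $\dwp(\delta_x,\eta)=t\,\dwp(\delta_x,\nu)$, and pin down $\dwp(\nu,\eta)$ by the cost of the map $D^x_t$ from above and the triangle inequality from below. That part is correct.

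The direction $(2)\Rightarrow(1)$ has a genuine gap. Your plan rests on structural claims about geodesics that fail for a general norm: when $N$ is not strictly convex, geodesics of $(\Rn,d_N)$ between two points are \emph{not} only line segments, and optimal plans and Wasserstein geodesics are far from unique, so ``the geodesic through $\nu$ cannot be prolonged'' would have to be excluded over a large, poorly controlled family of interpolations. You acknowledge this, but the resolution remains a sketch (``I expect'', ``I anticipate''), and the fallback of assuming strict convexity does not prove the proposition as stated, which assumes only that $N$ is a norm. More importantly, the non-prolongability contradiction --- the entire content of this direction --- is never actually derived. The paper avoids all geodesic theory with a much more elementary choice: take $\nu=\delta_y$ for a point $y$ with $d_N(x_1,y)>d_N(x_2,y)>0$, where $x_1,x_2$ are two distinct points of $\supp(\mu)$. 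For any candidate $\eta$, the product coupling $\mu\times\eta$, the pointwise triangle inequality through $y$, and Minkowski's inequality in $L^p(\mu\times\eta)$ give $\dwp(\mu,\eta)\le\dwp(\mu,\nu)+\dwp(\nu,\eta)$; alignment forces equality in Minkowski, which for $p>1$ means $d_N(x,y)=\lambda\, d_N(y,z)$ for $(\mu\times\eta)$-a.e.\ $(x,z)$, hence $d_N(\cdot,y)$ is essentially constant on $\supp(\mu)$, contradicting the choice of $y$. I recommend replacing your prolongability plan with this Minkowski-equality argument, which works for an arbitrary norm.
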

\begin{proof}
We start by showing that $(1) \Rightarrow (2)$. Take $x\in \mathbb{R}^n$, set $\mu=\delta_x$ and let $\nu \in \wprnn$ with $\mu \neq \nu$. Consider the dilation map $D_x:\mathbb{R}^n \rightarrow \mathbb{R}^n$ given by $D_x(y)=x+2(y-x)$. We claim that, with the measure $\eta:= (D_x)_{\#}\nu$, $(\mu, \nu, \eta)$ is $\mathcal{W}_p-$aligned. Indeed, we have
\begin{equation*}
\begin{split}
d_{\mathcal{W}_p}(\mu, \eta)&=\min_{\pi \in \Pi(\mu, \eta)}\left( \iint_{\Rn \times \Rn} d^p_N(\tilde{x},z)d\pi(\tilde{x},z)\right)^{1/p}\\
&=\left(\int_{\mathbb{R}^n} d_N^p(x,z)d\eta(z)\right)^{1/p}=\left(\int_{\mathbb{R}^n} d^p_N(x,D_x(y))d\nu(y)\right)^{1/p} \\
&=\left(\int_{\mathbb{R}^n} N(x-x-2(y-x))^pd\nu(y)\right)^{1/p}=\left(\int_{\mathbb{R}^n} 2^pN(y-x)^pd\nu(y)\right)^{1/p}  \\
&=2\left(\int_{\mathbb{R}^n} d^p_N(x,y)d\nu(y)\right)^{1/p}=2d_{\mathcal{W}_p}(\mu, \nu).
\end{split}
\end{equation*}
So we get that $d_{\mathcal{W}_p}(\mu, \nu)=\frac{1}{2}d_{\mathcal{W}_p}(\mu, \eta)$. 

By the triangle inequality, we have $d_{\mathcal{W}_p}(\mu, \eta) \leq d_{\mathcal{W}_p}(\mu, \nu) + d_{\mathcal{W}_p}(\nu, \eta)$, giving the bound $\frac{1}{2}d_{\mathcal{W}_p}(\mu, \eta) \leq d_{\mathcal{W}_p}(\nu, \eta)$. On the other hand, since $D_x$ is a transport map from $\nu$ to $\eta$, 
\begin{align*}
d_{\mathcal{W}_p}(\nu, \eta) \leq \left( \int_{\mathbb{R}^n} d^p_N(y, D_x(y))d\nu (y)\right)^{1/p}=\left( \int_{\mathbb{R}^n} N(y-x-2(y-x))^pd\nu(y)\right)^{1/p} \\
=\left( \int_{\mathbb{R}^n} N(y-x)^pd\nu(y)\right)^{1/p}=\left( \int_{\mathbb{R}^n} d^p_N(x,y)d\nu(y)\right)^{1/p} = d_{\mathcal{W}_p}(\mu, \nu)=\frac{1}{2}d_{\mathcal{W}_p}(\mu, \eta)
\end{align*}
showing that $\frac{1}{2}d_{\mathcal{W}_p}(\mu, \eta) \geq d_{\mathcal{W}_p}(\nu, \eta)$. This proves that \eqref{eq:aligned} holds, and thus $(1) \Rightarrow (2)$. 

We now show that $(2) \Rightarrow (1)$. Assume for contradiction that the measure $\mu$ is not a Dirac mass, and thus has two distinct points in its support, $x_1$ and $x_2$. For a point $y \in \Rn$ such that $d_N(x_1,y)>d_N(x_2,y)>0$,  we set $\nu=\delta_y$ to be the Dirac mass located at $y$. By the assumption there exists a measure $\eta$ ($=\eta_y$, as the measure depends on the choice of $y$) such that \eqref{eq:aligned} holds. Then 
\begin{align*}
d_{\mathcal{W}_p}(\mu, \eta)
&\leq \left(\iint_{\Rn \times \Rn} d_N^p(x,z)d(\mu \times \eta)(x,z) \right)^{1/p} \\
&\leq \left(\iint_{\Rn \times \Rn} (d_N(x,y) + d_N(y,z))^p d(\mu \times \eta)(x,z) \right)^{1/p} \\
&\leq\left(\iint_{\Rn \times \Rn} d_N^p(x,y) d(\mu \times \eta)(x,z) \right)^{1/p} + \left( \iint_{\Rn \times \Rn} d_N^p(y,z) d(\mu \times \eta)(x,z) \right)^{1/p} \\
&=\left( \int_{\mathbb{R}^n} d_N^p(x,y)d\mu(x) \right)^{1/p} + \left(\int_{\mathbb{R}^n} d_N^p(y,z) d\eta(z) \right)^{1/p}\\
&= d_{\mathcal{W}_p}(\mu, \nu) + d_{\mathcal{W}_p}(\nu, \eta),
\end{align*}
where in the third inequality, we used the Minkowski inequality. 
Since $(\mu, \nu, \eta)$ are $\mathcal{W}_p-$aligned, these inequalities are saturated. Since $p>1$, the Minkowski inequality becomes an equality only when there is a constant $\lambda$ such that  
\begin{equation}
\label{eq:Minkowski}
d_N(x,y)=\lambda d_N(y,z) \text{ for } (\mu \times \eta)\text{-a.e.} (x,z).
\end{equation} 

Then, by the definition of the support of a measure, for all $\varepsilon >0$, the balls $B_N(x_1, \varepsilon)$ and $B_N(x_2, \varepsilon)$ have positive $\mu$-measure and thus we can find $z' \in \supp (\eta)$, $x_1' \in B_N(x_1, \varepsilon)$ and $x_2' \in B_N(x_2,\varepsilon)$ such that $d_N(x_1', y)=\lambda d_N(y,z')$ and $d_N(x_2', y)=\lambda d_N(y,z')$. By our choice of $y$, if $\varepsilon$ is small enough we can guarantee that $d_N(x_1',y)>d_N(x_2',y)>0$.  But then $d_N(x_1',y)=\lambda d_N(y,z')=d_N(x_2',y)$, which is the desired contradiction. Therefore $\mu$ is indeed a Dirac mass.

\end{proof}
As the above proposition offers a metric characterization of Dirac masses, we have the following important corollary.
\begin{corollary}\label{corr:p>1 Dirac fix}
Let $p>1$, and consider the metric space $(\mathbb{R}^n,d_N)$, with $d_N$ a distance induced by a norm $N$. Assume that $\Phi:\Wp(\Rn,d_N)\to\Wp(\Rn,d_N)$ is an isometry. Then  there exists an isometry $\psi:(\mathbb{R}^n,d_N)\to(\mathbb{R}^n,d_N)$ such that $(\Phi\circ\psi_{\#}^{-1})(\delta_x)=\delta_x$ for all $x\in\Rn$.
\end{corollary}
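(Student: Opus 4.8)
The plan is to read the statement off directly from Proposition \ref{pbigger1}. The crucial point is that condition (2) of that proposition — ``for every $\nu\neq\mu$ there is an $\eta$ with $(\mu,\nu,\eta)$ being $\Wp$-aligned'' — is phrased purely in terms of the metric $d_{\Wp}$, hence is preserved by any bijective isometry of $\Wp(\Rn,d_N)$.

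First I would check that $\mu$ satisfies (2) if and only if $\Phi(\mu)$ does. Indeed, $\Phi$ preserves distances and is onto, so given an aligning $\eta$ for a pair $(\mu,\nu)$, the measure $\Phi(\eta)$ aligns $(\Phi(\mu),\Phi(\nu))$, and applying the same reasoning to $\Phi^{-1}$ gives the converse; distinctness of the three measures is also preserved since $\Phi$ is injective. By Proposition \ref{pbigger1} this shows that $\mu$ is a Dirac mass if and only if $\Phi(\mu)$ is, so $\Phi$ restricts to a bijection of the set $\{\delta_x:x\in\Rn\}$ onto itself.

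Next I would define $\psi:\Rn\to\Rn$ by the relation $\Phi(\delta_x)=\delta_{\psi(x)}$, which is well posed by the previous step, with inverse map determined by $\Phi^{-1}(\delta_y)=\delta_{\psi^{-1}(y)}$. Using that $x\mapsto\delta_x$ is an isometric embedding of $(\Rn,d_N)$ into $\Wp(\Rn,d_N)$, i.e. $d_{\Wp}(\delta_x,\delta_y)=d_N(x,y)$ (the only coupling of $\delta_x$ and $\delta_y$ is $\delta_{(x,y)}$), one gets
$d_N(\psi(x),\psi(y))=d_{\Wp}(\delta_{\psi(x)},\delta_{\psi(y)})=d_{\Wp}(\Phi(\delta_x),\Phi(\delta_y))=d_{\Wp}(\delta_x,\delta_y)=d_N(x,y)$,
so $\psi$ preserves $d_N$, and it is bijective because $\Phi$ is bijective on Dirac masses. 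Hence $\psi$ is an isometry of $(\Rn,d_N)$, and $\psi_{\#}$ (and thus $\psi_{\#}^{-1}$) is a trivial isometry of $\Wp(\Rn,d_N)$.

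Finally, since $\psi_{\#}^{-1}(\delta_x)=\delta_{\psi^{-1}(x)}$ by the definition of the push-forward, I would simply compute
$(\Phi\circ\psi_{\#}^{-1})(\delta_x)=\Phi(\delta_{\psi^{-1}(x)})=\delta_{\psi(\psi^{-1}(x))}=\delta_x$ for every $x\in\Rn$, which is the claim. There is no real obstacle here; the only point that deserves a word of care is verifying that $\Phi^{-1}$ also maps Dirac masses to Dirac masses, so that $\psi$ is genuinely a bijection — but this is immediate, as the characterization of Proposition \ref{pbigger1} applies verbatim to the isometry $\Phi^{-1}$.
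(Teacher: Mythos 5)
Your proposal is correct and follows essentially the same route as the paper: transfer the purely metric characterization of Dirac masses from Proposition \ref{pbigger1} through the isometry $\Phi$ (and $\Phi^{-1}$) to see that Dirac masses map to Dirac masses, then define $\psi$ by $\Phi(\delta_x)=\delta_{\psi(x)}$ and use the isometric embedding $x\mapsto\delta_x$ to conclude. Your added care about $\Phi^{-1}$ preserving Dirac masses, ensuring $\psi$ is a genuine bijection, is a point the paper leaves implicit but is handled correctly here.
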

\begin{proof}
For $x \in \Rn$, set $\mu=\delta_x$ the Dirac mass supported on $x$. We first show that $\Phi(\mu)$ is also a Dirac mass. For this, consider a measure $\nu' \in \Wp(\Rn, d_N)$, $\nu'\neq \Phi(\mu)$. Then, since $\mu$ is a Dirac mass, there exists a measure $\eta \in \Wp(\Rn, d_N)$ with $\eta$ different to $\mu$ and $\Phi^{-1}(\nu')$ such that $(\mu, \Phi^{-1}(\nu'), \eta)$ is $\Wp$-aligned. Since distances are preserved under isometry, we have that $(\Phi(\mu), \nu', \Phi(\eta))$ is also $\Wp$-aligned. Since this is true for any $\nu' \neq \Phi(\mu)$, by Proposition \ref{pbigger1}, $\Phi(\mu)$ is a Dirac mass, i.e. there exists $y \in \Rn$ such that $\Phi(\delta_x)=\delta_y$.  

To finish the proof, we define the map $\psi : \Rn \to \Rn$ by the relation $\delta_{\psi(x)}=\Phi(\delta_x)$. Since $\Phi$ is an isometry of $\mathcal{W}_p(\Rn, d_N)$, it is easy to see that $\psi$ is an isometry of $(\Rn, d_N)$, and that $\Phi \circ \psi_{\#}^{-1}(\delta_x)=\delta_x$. 
\end{proof}

When $p=1$, the characterization of Dirac masses from Proposition \ref{pbigger1} does not hold in general, as the following example shows. 
\begin{example} Consider the $l_1$-norm on $\R^2$ given by $N_1(x_1, x_2)=|x_1| + |x_2|$. Then there exists a measure $\mu$ which is not a Dirac mass such that, for any measure $\nu\neq \mu$, there exists a measure $\eta$ such that $(\mu, \nu, \eta)$ is $\mathcal{W}_1$-aligned. 
\end{example}
To see this, we take the measures 
$$\mu=\frac{1}{2}\delta_{(0,0)}+\frac{1}{2}\delta_{(1,0)} \in \mathcal{W}_1(\R^2, d_1) \text{ and } \nu=\delta_y \text{ for }y=(y_1,y_2)\in \R^2 \text {, }y_2 > 0.$$ Set $t_0=d_{\mathcal{W}_1}(\mu, \nu)$, $z=y + t_0 e_2$, and $\eta=\delta_z$. Then
\begin{align*}
d_{\mathcal{W}_1}(\mu, \eta)&=\frac{1}{2}d_1((0,0), (y_1, y_2 + t_0)) +\frac{1}{2} d_1((1,0), (y_1, y_2 + t_0)) \\
&=\frac{1}{2}(|y_1| + |y_2+t_0| + |y_1-1| + |y_2+t_0|) \\
&=\frac{1}{2}(2|t_0| + 2|y_2| + |y_1| + |y_1-1|)=t_0 + d_{\mathcal{W}_1}(\mu, \nu), 
\end{align*}
and thus the triple $(\mu, \nu,\eta)$ is $\mathcal{W}_1$-aligned, even though $\mu$ is not a Dirac mass.  A similar construction shows that for any measure $\nu$ we can find $\eta$ such that $(\mu, \nu, \eta)$ is $\mathcal{W}_1$-aligned.

However, for $p=1$, we can recover the statement of Proposition \ref{pbigger1} if we require the norm $N$ to be strictly convex.

\begin{proposition}
\label{prop:pequal1}
Let $p=1$, and consider the metric space $(\mathbb{R}^n,d_N)$, with $d_N$ induced by a strictly convex norm $N: \Rn \to \R_{+}$. Let $\mu \in \wornn$ be a measure. Then the following are equivalent:
\begin{enumerate}
\item There exists $x \in \mathbb{R}^n$ such that $\mu = \delta_x$ is a Dirac mass.
\item For all $\nu \in \mathcal{W}_1(\mathbb{R}^n, d_N)$, $\mu \neq \nu$, there exists an $\eta \in \mathcal{W}_1(\mathbb{R}^n, d_N)$ such that the triple $(\mu, \nu, \eta)$ is $\mathcal{W}_1$-aligned.  
\end{enumerate}
\end{proposition}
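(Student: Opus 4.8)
The plan is to mirror the proof of Proposition~\ref{pbigger1}, isolating the one place where $p>1$ was essential and replacing it with an argument that uses the strict convexity of $N$.

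For the implication $(1)\Rightarrow(2)$ nothing needs to change. Given $\mu=\delta_x$ and $\nu\neq\mu$, the dilation $D_x(w)=x+2(w-x)$ and the measure $\eta:=(D_x)_{\#}\nu$ do the job exactly as in Proposition~\ref{pbigger1}: the identity $d_{\mathcal{W}_1}(\mu,\eta)=2\,d_{\mathcal{W}_1}(\mu,\nu)$ and the estimate $d_{\mathcal{W}_1}(\nu,\eta)\le d_{\mathcal{W}_1}(\mu,\nu)$ (from using $D_x$ as a transport plan) hold for every $p\ge 1$, and together with the triangle inequality they give \eqref{eq:aligned}; distinctness of $\mu,\nu,\eta$ is automatic since $d_{\mathcal{W}_1}(\mu,\nu)>0$. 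No convexity assumption is used here.

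For $(2)\Rightarrow(1)$ I argue by contraposition. Suppose $\mu$ is \emph{not} a Dirac mass, so $\supp\mu$ contains two distinct points $x_1,x_2$; fix a point $y$ in the \emph{open} segment $(x_1,x_2)$ and set $\nu:=\delta_y$, noting $\mu\neq\nu$. Assume, for a contradiction, that some $\eta$ makes $(\mu,\nu,\eta)$ aligned. Running the same chain of inequalities as in the proof of Proposition~\ref{pbigger1} — comparing $d_{\mathcal{W}_1}(\mu,\eta)$ first with the cost of the product coupling $\mu\times\eta$ and then with the pointwise triangle inequality through $y$ — alignment forces all of them to be equalities. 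In particular the triangle step is tight on average, i.e.
\[
\iint_{\Rn\times\Rn}\bigl(d_N(x,y)+d_N(y,z)-d_N(x,z)\bigr)\,\dd(\mu\times\eta)=0 ,
\]
and since the integrand is nonnegative it vanishes $(\mu\times\eta)$-a.e. Here is the one genuinely new point compared with $p>1$: for $p=1$ the Minkowski inequality that produced a proportionality relation in Proposition~\ref{pbigger1} degenerates into an identity, so all the information must be extracted from this "triangle inequality saturated a.e." condition — and as the $\ell_1$ example above shows, that condition is useless for a general norm. It becomes rigid precisely when $N$ is strictly convex, via the elementary fact (whose short proof I will include) that in a strictly convex normed space $N(a-c)=N(a-b)+N(b-c)$ holds if and only if $b$ lies on the straight segment $[a,c]$. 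Thus $y\in[x,z]$ for $(\mu\times\eta)$-a.e.\ pair $(x,z)$.

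It remains to derive the contradiction. By Fubini there is a set $F$ with $\mu(F)=1$ such that for each $x\in F$ one has $y\in[x,z]$ for $\eta$-a.e.\ $z$; when $x\in F$ and $x\neq y$ this says that $\eta$ is concentrated on the closed ray $\rho(x):=\{\,y+t(y-x):t\ge 0\,\}$, hence $\supp\eta\subseteq\rho(x)$. Since $x_1,x_2\in\supp\mu$, the balls $B_N(x_1,\varepsilon)$ and $B_N(x_2,\varepsilon)$ have positive $\mu$-measure, so for $\varepsilon<\min\{d_N(x_1,y),d_N(x_2,y)\}$ we may choose $x_1'\in F\cap B_N(x_1,\varepsilon)$ and $x_2'\in F\cap B_N(x_2,\varepsilon)$, both different from $y$, giving $\supp\eta\subseteq\rho(x_1')\cap\rho(x_2')$. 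Because $y$ lies strictly between $x_1$ and $x_2$, the vectors $y-x_1$ and $y-x_2$ are negative multiples of one another; picking a hyperplane $H\ni y$ that strictly separates $x_1$ from $x_2$, for $\varepsilon$ small the punctured ray $\rho(x_1')\setminus\{y\}$ lies strictly on the $x_2$-side of $H$ and $\rho(x_2')\setminus\{y\}$ strictly on the $x_1$-side, so $\rho(x_1')\cap\rho(x_2')=\{y\}$. Hence $\eta=\delta_y=\nu$, which contradicts the requirement that the three measures in an aligned triple be distinct. Therefore no such $\eta$ exists, (2) fails for $\nu=\delta_y$, and by contraposition (2) forces $\mu$ to be a Dirac mass. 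The main obstacle is the middle step — replacing the (now vacuous) Minkowski equality case by the strict-convexity characterization of metric segments; once $y\in[x,z]$ a.e.\ is in hand, the closing geometric argument (choosing $y$ inside $[x_1,x_2]$ so that two ray constraints meet only at $y$) is routine.
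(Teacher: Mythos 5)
Your proof is correct and follows essentially the same route as the paper's: the forward implication via the dilation $D_x$, and the converse via the product coupling, the a.e.\ saturation of the triangle inequality, the strict-convexity characterization of metric segments, and the observation that two distinct rays emanating from $y$ toward points near $x_1$ and $x_2$ meet only at $y$, forcing $\eta=\delta_y$. The only (immaterial) differences are that the paper fixes $y$ to be the midpoint of $[x_1,x_2]$ rather than an arbitrary interior point, and your closing contradiction via the distinctness requirement in the definition of a $\mathcal{W}_1$-aligned triple is, if anything, slightly cleaner than the paper's.
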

\begin{proof}

The proof of $(1) \Rightarrow (2)$ comes from the same calculation as in the proof of Proposition \ref{pbigger1}.
We thus only need to prove $(2) \Rightarrow (1)$. We assume for contradiction that $\mu$ is not a Dirac mass, and thus has two distinct points in its support, $x_1$ and $x_2$. We set $\nu=\delta_y$, where $y=\frac{1}{2}(x_1 + x_2)$,  
and consider the measure $\eta$ $(=\eta_y)$ such that \eqref{eq:aligned} holds. 
Then, similarly to the previous proof, 
\begin{align*}
d_{\mathcal{W}_1}(\mu, \eta)  &\leq  \iint_{\Rn \times \Rn} d_N(x,z)d(\mu \times \eta)(x,z) \\
&\leq \iint_{\Rn \times \Rn} (d_N(x,y) + d_N(y,z)) d(\mu \times \eta)(x,z)  \\
&\leq \iint_{\Rn \times \Rn} d_N(x,y) d(\mu \times \eta)(x,z)  + \iint_{\Rn \times \Rn} d_N(y,z) d(\mu \times \eta)(x,z)  \\
&= \int_{\mathbb{R}^n} d_N(x,y)d\mu(x)  +  \int_{\mathbb{R}^n}  d_N(y,z) d\eta(z)  = d_{\mathcal{W}_1}(\mu, \nu) + d_{\mathcal{W}_1}(\nu, \eta)
\end{align*}

In this case, instead of using the Minkowski inequality, we simply used the triangle inequality. Since the triple $(\mu, \nu, \eta)$ is $\mathcal{W}_1-$aligned, the inequalities are saturated, and we get that 
\begin{equation}
d_N(x,z)=d_N(x, y) + d_N(y,z) \text{ for } (\mu \times \eta)\text{-a.e.} (x,z).
\end{equation} 
For $\varepsilon >0$, we can thus find $x_1' \in B_N(x_1, \varepsilon)$, $x_2' \in B_N(x_2, \varepsilon)$ and $z \in \supp (\eta)$ such that 
\begin{equation}
\label{eq:halflines}
d_N(x_i',z)=d_N(x_i',y)+d_N(y,z)
\end{equation}
holds for $i = 1, 2$.
It is known (see eg. \cite[Lemma 7.2.1]{Pap}) that if $(X,d_N)$ is a strictly convex normed vector space, then three points $x,y,z$ satisfy $$N(x-y)+ N(y-z) = N(x-z)$$ if and only if there exists $t$ in $[0,1]$ such that $y=(1-t)x+tz$. 
\\
Using this, we have $d_N(x_1',z)=d_N(x_1',y) + d_N(y,z)$ if and only if $z$ is on the half-line starting at $y$ given by $L_1(t)=y+t(y-x_1')$, $t\geq 0$. 
 Similarly $d_N(x_2',z)=d_N(x_2',y) + d_N(y,z)$ if and only if $z$ is on the half-line $L_2(t)=y+t(y-x_2')$, $t\geq 0$. If $\varepsilon$ is small enough, these half-lines are distinct and the two equalities of \eqref{eq:halflines} hold only when $z=y$. Thus,  $d_N(x,z)=d_N(x,y)+d_N(y,z)$ holds for $\eta$-a.e. $z$ only if $\eta=\delta_y$. But then $d_{\mathcal{W}_1}(\mu, \delta_y)=0$, and $x_1=x_2=y$, thus $\mu=\delta_y$ is a Dirac mass, which gives the desired contradiction.
\end{proof}
As before, we have the following corollary:
\begin{corollary}\label{corr:p=1 Dirac fix}
Let $p=1$, and consider the metric space $(\mathbb{R}^n,d_N)$, with $d_N$ a distance induced by a strictly convex norm $N$. Assume that $\Phi:\mathcal{W}_1(\Rn,d_N)\to\mathcal{W}_1(\Rn,d_N)$ is an isometry. Then there exists an isometry $\psi:(\Rn,d_N)\to(\Rn,d_N)$ such that $(\Phi\circ\psi_{\#}^{-1})(\delta_x)=\delta_x$ for all $x\in\Rn$.
\end{corollary}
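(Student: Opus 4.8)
The plan is to mimic verbatim the proof of Corollary \ref{corr:p>1 Dirac fix}, substituting Proposition \ref{prop:pequal1} for Proposition \ref{pbigger1}, since the only input used there was the metric characterization of Dirac masses, and that characterization is now available for $p=1$ under the strict convexity hypothesis. So first I would fix $x \in \Rn$, set $\mu = \delta_x$, and show that $\Phi(\mu)$ is again a Dirac mass. To this end, take an arbitrary $\nu' \in \mathcal{W}_1(\Rn, d_N)$ with $\nu' \neq \Phi(\mu)$. Then $\Phi^{-1}(\nu') \neq \mu$, so by Proposition \ref{prop:pequal1} (implication $(1) \Rightarrow (2)$) there exists $\eta \in \mathcal{W}_1(\Rn, d_N)$, distinct from $\mu$ and $\Phi^{-1}(\nu')$, such that the triple $(\mu, \Phi^{-1}(\nu'), \eta)$ is $\mathcal{W}_1$-aligned.

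Next I would push this alignment forward through $\Phi$. Since $\Phi$ is an isometry of $\mathcal{W}_1(\Rn, d_N)$, it is in particular bijective and distance-preserving, so it maps distinct measures to distinct measures and preserves the additive relation \eqref{eq:aligned}; hence the triple $(\Phi(\mu), \nu', \Phi(\eta))$ is $\mathcal{W}_1$-aligned. As $\nu' \neq \Phi(\mu)$ was arbitrary, condition $(2)$ of Proposition \ref{prop:pequal1} holds for $\Phi(\mu)$, and therefore $\Phi(\mu)$ is a Dirac mass; that is, there is a (necessarily unique) point $\psi(x) \in \Rn$ with $\Phi(\delta_x) = \delta_{\psi(x)}$.

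This defines a map $\psi : \Rn \to \Rn$. I would then check that $\psi$ is an isometry of $(\Rn, d_N)$: since $x \mapsto \delta_x$ is an isometric embedding of $(\Rn, d_N)$ into $\mathcal{W}_1(\Rn, d_N)$ and $\Phi$ preserves distances, one gets $d_N(\psi(x), \psi(x')) = d_{\mathcal{W}_1}(\delta_{\psi(x)}, \delta_{\psi(x')}) = d_{\mathcal{W}_1}(\Phi(\delta_x), \Phi(\delta_{x'})) = d_{\mathcal{W}_1}(\delta_x, \delta_{x'}) = d_N(x, x')$, so $\psi$ is distance-preserving and in particular injective; applying the same argument to $\Phi^{-1}$ (which is also an isometry, and likewise sends Dirac masses to Dirac masses) produces an inverse for $\psi$, so $\psi$ is bijective and hence an isometry of $(\Rn, d_N)$. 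Consequently $\psi_{\#}$ is a well-defined trivial isometry of $\mathcal{W}_1(\Rn, d_N)$, and $(\Phi \circ \psi_{\#}^{-1})(\delta_x) = \Phi(\delta_{\psi^{-1}(x)}) = \delta_{\psi(\psi^{-1}(x))} = \delta_x$ for every $x \in \Rn$, which is exactly the claim.

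I do not anticipate a genuine obstacle here: the argument is a routine transcription of the $p>1$ case, and the role previously played by Proposition \ref{pbigger1} is now played by Proposition \ref{prop:pequal1}. The only points deserving a word of care are that the definition of $\mathcal{W}_1$-alignment in \eqref{eq:aligned} bakes in the distinctness of the three measures (which an isometry preserves, so no issue arises), and that one should invoke the strict convexity of $N$ only indirectly, through the already-established Proposition \ref{prop:pequal1}, rather than re-deriving anything about geodesics.
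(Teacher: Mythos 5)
Your proposal is correct and is exactly the argument the paper intends: the paper gives no separate proof for this corollary, merely noting "As before," i.e.\ one repeats the proof of Corollary \ref{corr:p>1 Dirac fix} verbatim with Proposition \ref{prop:pequal1} in place of Proposition \ref{pbigger1}. Your transcription, including the checks that $\Phi$ preserves $\mathcal{W}_1$-alignment and that $\psi$ is a surjective isometry via $\Phi^{-1}$, matches that intended argument.
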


\section{Upgrading rigidity}
\label{sec:upgrading}

As mentioned in the introduction, in \cite{GTV1} it is shown that, for $p\neq 2$, the Wasserstein space $\mathcal{W}_p(\R, |\cdot |)$ is rigid. As a consequence, if an isometry $\Phi$ of $\mathcal{W}_p(\Rn, d_N)$ globally preserves measures that are supported on a line, then it acts as a trivial isometry on those measures. 
One of the main points of our proof will be to show that such a line exists; this will be the result of the next section. 

In this section, we show how we can utilize rigidity on a proper linear subspace to prove rigidity on the entire space. In particular, we show that, if an isometry $\Phi$ fixes both measures supported on such a subspace and measures supported on an appropriately chosen "complementary" linear subspace, then $\Phi$ is the identity on the whole Wasserstein space. 

We recall that a linear subspace is a space $L \subset \Rn$ such that, if $v_1, v_2 \in L$ and $\lambda_1, \lambda_2 \in \R$, then $\lambda_1 v_1 + \lambda_2 v_2 \in L$. If $L$ is a linear subspace and $v_0 \in \Rn$ is a vector, then the space $v_0 + L$ is called an affine subspace.

Consider a proper linear subspace $L \subset \Rn$. 
We say that a norm $N$ \textit{projects uniquely onto} the linear subspace $L$ if, for any $x \in \Rn$, there exists a unique $\hat{x} \in L$ such that $d_N(x, \hat{x}) \leq d_N(x, y)$ for all $y \in L$. Then we denote $P_{L}(x)=\hat{x}$ the projection map of $x \in \R^n$ onto $L$.  Since for any $v_0 \in \Rn$, we have $d_N(x, y-v_0 )= d_N(x + v_0 , y)$, it is clear that if $N$ projects uniquely onto a linear subspace $L$, then it also projects uniquely onto the affine subspace $v_0 + L$, and we define $P_{v_0 + L}(x)$ analogously to the linear subspace case.

In this section we will assume that the norm $N$ projects uniquely onto the proper subspace $L$. It is easy to see that, if the norm is strictly convex as in the Theorems \ref{thm1} and \ref{thm2}, this assumption always holds.

Notice that the result presented in this section holds for all $1 \leq p < \infty$, including the special case $p=2$.  

We present a few properties of the projection operators. The following lemma is due to Fletcher and Moors \cite{fletcher_chebyshev_2015}:
\begin{lemma}
\label{lem: Chebychev}
If $L$ is a linear subspace of $(\Rn, d_N)$, then for any $x \in \Rn$, $k \in L$ and $\lambda \in \R$, we have 
$$
P_L(\lambda x + k)=\lambda P_L(x)+k.
$$
\end{lemma}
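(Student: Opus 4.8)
The plan is to prove the affine-equivariance of the metric projection $P_L$ in two stages, first handling the additive part (translation by $k \in L$) and then the scaling part (multiplication by $\lambda$).

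\textbf{Step 1: Translation by an element of $L$.} I would fix $x \in \Rn$ and $k \in L$. Since $L$ is a linear subspace, the map $y \mapsto y + k$ is a bijection of $L$ onto itself. For any $y \in L$ we have $d_N(x+k, y) = N(x + k - y) = N(x - (y-k)) = d_N(x, y - k)$, and as $y$ ranges over $L$, so does $y - k$. Hence the function $y \mapsto d_N(x+k, y)$ on $L$ is exactly the function $y \mapsto d_N(x, y-k)$ on $L$, which attains its unique minimum at $y = P_L(x) + k$ (because $y - k = P_L(x)$ minimizes $d_N(x, \cdot)$ over $L$, by definition and uniqueness of the projection). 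Therefore $P_L(x + k) = P_L(x) + k$.

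\textbf{Step 2: Scaling.} For $\lambda \neq 0$, fix $x \in \Rn$. For $y \in L$ we have $d_N(\lambda x, y) = N(\lambda x - y) = |\lambda| \, N(x - y/\lambda) = |\lambda| \, d_N(x, y/\lambda)$, and $y \mapsto y/\lambda$ is a bijection of $L$. Thus minimizing $d_N(\lambda x, \cdot)$ over $L$ is equivalent to minimizing $d_N(x, \cdot)$ over $L$, so the unique minimizer satisfies $P_L(\lambda x)/\lambda = P_L(x)$, i.e. $P_L(\lambda x) = \lambda P_L(x)$. The case $\lambda = 0$ is trivial since $P_L(0) = 0$ (the origin lies in $L$ and realizes zero distance, which is the unique minimum). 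Combining Step 2 with Step 1 applied to the point $\lambda x$ gives $P_L(\lambda x + k) = P_L(\lambda x) + k = \lambda P_L(x) + k$, as claimed.

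\textbf{Remark on difficulty.} There is essentially no obstacle here: the whole content is that the distance-to-$L$ functional transforms covariantly under the affine symmetries of $L$, and uniqueness of the minimizer (which is part of the hypothesis that $N$ projects uniquely onto $L$) transfers the covariance to the argmin. The only point requiring the tiniest care is invoking uniqueness at each step to identify the transformed minimizer with the minimizer of the transformed functional; since the paper already assumes $N$ projects uniquely onto $L$, this is immediate. One could alternatively cite \cite{fletcher_chebyshev_2015} directly, but the self-contained argument above is short enough to include.
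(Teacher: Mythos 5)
Your proof is correct. Note that the paper does not actually prove this lemma at all: it is stated with the attribution ``The following lemma is due to Fletcher and Moors \cite{fletcher_chebyshev_2015}'' and no argument is given, so there is no in-paper proof to compare against. Your self-contained two-step argument --- reducing translation by $k\in L$ to the identity $d_N(x+k,y)=d_N(x,y-k)$ together with the fact that $y\mapsto y-k$ permutes $L$, and reducing scaling to the homogeneity $N(\lambda v)=|\lambda|N(v)$ together with the fact that $y\mapsto y/\lambda$ permutes $L$ --- is exactly the right elementary route, and you correctly flag that the uniqueness of the minimizer (which the paper assumes throughout this section, since $N$ is taken to project uniquely onto $L$) is what lets you identify the transported minimizer with the minimizer of the transported functional. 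The handling of $\lambda=0$ and the final composition of the two steps are also correct. The only cosmetic remark is that the lemma as displayed does not restate the unique-projection hypothesis, so it is worth keeping your explicit appeal to it visible rather than implicit.
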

In particular, this Lemma implies that the set $S=P_L^{-1}(0)$ of points that project onto $0$ is homogeneous; if $x \in S$, then the entire line supporting the segment between $0$ and $x$ is contained in $S$. Also, for any $k \in L$, the preimage of $k$, $S_k=P_L^{-1}(k)$ is simply the preimage of $0$ translated by the vector $k$, i.e. $S_k=S + k$. 

If we know the projection of a vector onto a subspace, we can say the following about its projection onto the translation of the subspace:

\begin{lemma} 
\label{lem: proj_trans}
If $L$ is a linear subspace and $v_0, v_1  \in \Rn$, $L'=v_0 + L$, we have $$P_{L'-v_1}(x)=P_{L'}(x+v_1)-v_1$$ for any $x \in \Rn$.
\end{lemma}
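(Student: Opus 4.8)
The statement to be proved is the identity $P_{L'-v_1}(x)=P_{L'}(x+v_1)-v_1$ for a linear subspace $L$, vectors $v_0,v_1\in\Rn$, and $L'=v_0+L$. The plan is to unwind everything to the linear subspace $L$ and reduce the claim to the elementary translation-covariance of the projection onto an affine subspace that was already recorded just before Lemma \ref{lem: proj_trans}: namely, for any $w_0\in\Rn$ one has $P_{w_0+L}(x)=P_L(x-w_0)+w_0$, which follows directly from $d_N(x,y)=d_N(x-w_0,y-w_0)$ and the definition of the (unique) metric projection.

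First I would apply this with $w_0=v_0-v_1$, so that $L'-v_1=(v_0-v_1)+L$, obtaining
\[
P_{L'-v_1}(x)=P_L\bigl(x-(v_0-v_1)\bigr)+(v_0-v_1)=P_L\bigl(x+v_1-v_0\bigr)+v_0-v_1.
\]
Next I would apply the same identity with $w_0=v_0$ to the right-hand side of the desired equation:
\[
P_{L'}(x+v_1)-v_1=P_L\bigl((x+v_1)-v_0\bigr)+v_0-v_1=P_L\bigl(x+v_1-v_0\bigr)+v_0-v_1.
\]
The two expressions coincide, which proves the lemma. Note that the uniqueness of the metric projection (i.e.\ that $N$ projects uniquely onto $L$, hence onto every translate of $L$, as observed in this section) is what makes $P_L$ and $P_{L'}$ well-defined single-valued maps, so that the displayed equalities are genuine equalities of points rather than of sets.

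There is essentially no obstacle here: the only thing to be careful about is bookkeeping with the two successive translations, making sure that the translate being used in each invocation of the basic identity is written as (a fixed vector) $+L$ with the same linear part $L$. One could alternatively phrase the whole argument in one line by writing $P_{L'-v_1}(x)=P_L(x-(v_0-v_1))+(v_0-v_1)$ and $P_{L'}(x+v_1)=P_L(x+v_1-v_0)+v_0$ and subtracting $v_1$; I would probably present it in this compact two-equation form. If Lemma \ref{lem: Chebychev} is preferred as the starting point instead of the elementary translation identity, one can also deduce the $w_0$-translation formula from it by taking $\lambda=1$ and $k$ ranging over $L$ together with a shift argument, but invoking the translation identity directly is cleaner.
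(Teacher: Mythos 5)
Your proof is correct and rests on exactly the same idea as the paper's: translation invariance of $d_N$ together with uniqueness of the metric projection. The only difference is presentational --- the paper translates by $v_1$ in a single step to relate $P_{L'}$ and $P_{L'-v_1}$ directly, whereas you factor both sides through the linear subspace $L$ via the identity $P_{w_0+L}(x)=P_L(x-w_0)+w_0$; both routes are equally valid.
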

\begin{proof}
Set $\hat{x}=P_{L'}(x)$. Then, for any $y \in L'$, $d_N(x, y)=d_N(x-v_1, y-v_1)$, and the unique minimum of the right expression over all $y - v_1\in L'-v_1$ is attained at $y=\hat{x}$. Thus we have $P_{L'-v_1}(x-v_1)=\hat{x}-v_1=P_{L'}(x)-v_1$. 
\end{proof}

We can now rewrite Lemma \ref{lem: Chebychev} in the case of an affine subspace.
\begin{lemma}
\label{lem: Affine_Chebychev}
If $L$ is a linear subspace of $(\Rn, d_N)$ and $v_0 \in \Rn$, then for any $x \in \Rn$, $\lambda \in \R$ and $k \in L$, if $P_{L + v_0}(x)=\hat{x}$, we have 
$$
P_{L+v_0}(\hat{x} + \lambda (x-\hat{x}) + k)= \hat{x}+k.
$$
\end{lemma}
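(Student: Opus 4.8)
The plan is to reduce the affine statement to the linear one, Lemma \ref{lem: Chebychev}, by translating the affine subspace $L' := L + v_0$ by the vector $\hat{x} = P_{L'}(x)$. Since $\hat{x} \in L' = L + v_0$, translating by $-\hat{x}$ turns $L'$ into the linear subspace $L$, that is, $L' - \hat{x} = L$. All the analytic content (uniqueness of the projection) is already in place, so the proof is purely a bookkeeping exercise with the translation formula of Lemma \ref{lem: proj_trans}.

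First I would record two consequences of Lemma \ref{lem: proj_trans}, both applied with the translation vector $v_1 = \hat{x}$. Applying it to the vector $x - \hat{x}$ gives
$$
P_L(x - \hat{x}) = P_{L' - \hat{x}}(x - \hat{x}) = P_{L'}\big((x-\hat{x}) + \hat{x}\big) - \hat{x} = P_{L'}(x) - \hat{x} = 0,
$$
so $x - \hat{x}$ projects onto the origin in $L$. Applying it to an arbitrary vector $z \in \Rn$ gives
$$
P_{L'}(z + \hat{x}) = P_{L' - \hat{x}}(z) + \hat{x} = P_L(z) + \hat{x}.
$$

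Next I would write the point in question as $\hat{x} + z$ with $z := \lambda (x - \hat{x}) + k$, and use the second identity above to get $P_{L'}\big(\hat{x} + \lambda(x-\hat{x}) + k\big) = P_L\big(\lambda(x-\hat{x}) + k\big) + \hat{x}$. Since $k \in L$, Lemma \ref{lem: Chebychev} applies to the vector $\lambda(x-\hat{x}) + k$ and yields $P_L\big(\lambda(x-\hat{x}) + k\big) = \lambda\, P_L(x - \hat{x}) + k = \lambda \cdot 0 + k = k$, where in the last step I use the first consequence above. Combining these gives $P_{L+v_0}\big(\hat{x} + \lambda(x-\hat{x}) + k\big) = \hat{x} + k$, which is the assertion.

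I do not anticipate a genuine obstacle here: the only point requiring care is matching the sign conventions of Lemma \ref{lem: proj_trans} and verifying that $L' - \hat{x}$ is exactly $L$ (which uses $\hat{x} \in L'$ and $L' = L + v_0$), together with the standing assumption that $N$ projects uniquely onto $L$ and onto each of its translates, so that all the projection maps appearing above are well defined.
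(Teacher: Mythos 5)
Your proof is correct and follows essentially the same route as the paper: translate by $\hat{x}$ via Lemma \ref{lem: proj_trans} (using $L+v_0-\hat{x}=L$), apply the linear Lemma \ref{lem: Chebychev}, and use $P_L(x-\hat{x})=P_{L+v_0}(x)-\hat{x}=0$. Your write-up is just a slightly more explicit version of the paper's chain of equalities.
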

\begin{proof}
Since $\hat{x} \in L + v_0$, we have that $v_0 +  L-\hat{x}=L$, and using Lemmata \ref{lem: Chebychev} and \ref{lem: proj_trans} we get
\begin{align*}
P_{L + v_0}(\hat{x} + \lambda (x-\hat{x})+k)=P_{L + v_0-\hat{x}}(\lambda (x-\hat{x})+k)  + \hat{x} \\
=\lambda P_{L}(x-\hat{x}) + \hat{x}+k.
=\lambda (P_{L+\hat{x}}(x)-\hat{x}) + \hat{x}+k=\hat{x}+k.
\end{align*}
\end{proof}
Thus, like in the linear case, the preimage of a point $S_k=P_{L'}^{-1}(k)$ is a collection of lines passing through $k$, and is a translation of the preimage of $v_0$, $P_{L'}^{-1}(v_0)$.

We write the set of measures supported on the affine subspace $L'$ as $\mathcal{W}_p(L', d_{N, L'})$.
The next lemma shows that the push-forward of the projection operator onto $L'$ defines a projection operator from $\mathcal{W}_p(\Rn, d_N)$ to $\mathcal{W}_p(L', d_{N, L'})$. 

\begin{lemma}
\label{lem: proj meas minim}
Consider the affine subspace $L' \subset \Rn$ and let $\mu \in \mathcal{W}_p(\R^n, d_N)$. Then $\hat{\mu}=P_{L'\#}\mu$ is the unique measure in $\mathcal{W}_p(L', d_{N,L'})$ such that 
$$ d_{W_p}(\mu, \hat{\mu}) \leq d_{W_p}(\mu, \nu)$$ for all $\nu \in \mathcal{W}_p(L', d_{N,L'})$. 
\end{lemma}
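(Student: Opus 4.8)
The plan is to transfer the pointwise optimality of the projection $P_{L'}$ to the level of measures, using the coupling of $\mu$ with $\hat\mu$ carried by the graph of $P_{L'}$. The starting observation is the pointwise inequality: for every $x\in\Rn$ and every $y\in L'$ one has $d_N(x,P_{L'}(x))\le d_N(x,y)$, and since $P_{L'}(x)$ is the \emph{unique} nearest point of $L'$ to $x$, equality holds if and only if $y=P_{L'}(x)$; raising to the power $p$ preserves both the inequality and the equality case. I would also dispatch here two routine verifications. First, $x\mapsto P_{L'}(x)$ is continuous: its graph is closed, because if $x_k\to x$ and $y_k=P_{L'}(x_k)$, $y_k\to y$, then $y\in L'$ and $d_N(x,y)=\lim_k\inf_{z\in L'}d_N(x_k,z)=\inf_{z\in L'}d_N(x,z)$, forcing $y=P_{L'}(x)$ by uniqueness; and $P_{L'}$ is locally bounded since $d_N(0,P_{L'}(x))\le 2d_N(0,x)+d_N(0,z_0)$ for a fixed $z_0\in L'$, so a closed graph plus local boundedness in finite dimension yields continuity. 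Hence $\pi_0:=(\mathrm{id},P_{L'})_{\#}\mu$ is a well-defined element of $\Pi(\mu,\hat\mu)$, and the same estimate $d_N(P_{L'}(x),P_{L'}(x_0))\le 2d_N(x,x_0)+C$ shows $\hat\mu\in\mathcal{W}_p(L',d_{N,L'})$.

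Next I would prove the minimality. Testing with the coupling $\pi_0$ gives
$$d_{\mathcal{W}_p}^p(\mu,\hat\mu)\le\int_{\Rn}d_N^p\big(x,P_{L'}(x)\big)\,d\mu(x).$$
Conversely, for any $\nu\in\mathcal{W}_p(L',d_{N,L'})$ and any $\pi\in\Pi(\mu,\nu)$, the measure $\pi$ is concentrated on $\Rn\times L'$ because $\nu$ is supported on $L'$, so the pointwise inequality yields
$$\iint d_N^p(x,y)\,d\pi(x,y)\ \ge\ \iint d_N^p\big(x,P_{L'}(x)\big)\,d\pi(x,y)\ =\ \int_{\Rn}d_N^p\big(x,P_{L'}(x)\big)\,d\mu(x).$$
Taking the infimum over $\pi$ gives $d_{\mathcal{W}_p}^p(\mu,\nu)\ge\int_{\Rn}d_N^p(x,P_{L'}(x))\,d\mu(x)\ge d_{\mathcal{W}_p}^p(\mu,\hat\mu)$, which is the asserted minimality; specializing to $\nu=\hat\mu$ shows in addition that these inequalities are equalities, so $d_{\mathcal{W}_p}^p(\mu,\hat\mu)=\int_{\Rn}d_N^p(x,P_{L'}(x))\,d\mu(x)$.

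For uniqueness I would argue as follows. If $\nu\in\mathcal{W}_p(L',d_{N,L'})$ also realizes the minimum, i.e. $d_{\mathcal{W}_p}(\mu,\nu)=d_{\mathcal{W}_p}(\mu,\hat\mu)$, pick an optimal coupling $\pi\in\Pi(\mu,\nu)$ (which exists since $(\Rn,d_N)$ is Polish). Running the chain of inequalities above with this particular $\pi$ and using that its two ends now coincide forces $\iint\big(d_N^p(x,y)-d_N^p(x,P_{L'}(x))\big)\,d\pi(x,y)=0$; the integrand being nonnegative, $d_N(x,y)=d_N(x,P_{L'}(x))$ for $\pi$-a.e.\ $(x,y)$, and by the equality case of the pointwise inequality $y=P_{L'}(x)$ for $\pi$-a.e.\ $(x,y)$. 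Thus $\pi$ is carried by the graph of $P_{L'}$, so $\pi=\pi_0$ and $\nu=p_{2\#}\pi=P_{L'\#}\mu=\hat\mu$. The only point needing care is precisely this equality-case analysis: everything rests on $L'$ being a Chebyshev set, so that matching the minimal distance to $L'$ already pins down the nearest point — which is why the uniqueness of the projection (automatic for the strictly convex norms of Theorems \ref{thm1} and \ref{thm2}) is used essentially and not merely for convenience; the measurability of $P_{L'}$ and the finite-moment check are routine.
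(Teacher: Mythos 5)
Your proof is correct, and it differs from the paper's in one substantive way. The paper first establishes that $\pi_0=(\mathrm{id},P_{L'})_{\#}\mu$ is an \emph{optimal} coupling between $\mu$ and $\hat{\mu}$ by checking that the graph of $P_{L'}$ is $c$-cyclically monotone for $c=d_N^p$ and invoking \cite[Corollary 2.6.8]{Figalli}; only then does it run the comparison with a general $\nu\in\mathcal{W}_p(L',d_{N,L'})$. You avoid the cyclical-monotonicity machinery entirely: you observe that $\int d_N^p(x,P_{L'}(x))\,d\mu(x)$ is simultaneously an upper bound for $d_{\mathcal{W}_p}^p(\mu,\hat{\mu})$ (testing with $\pi_0$) and a lower bound for $d_{\mathcal{W}_p}^p(\mu,\nu)$ for \emph{every} $\nu$ supported on $L'$ (pointwise inequality under any coupling), so that specializing the lower bound to $\nu=\hat{\mu}$ yields both the optimality of $\pi_0$ and the minimality claim in one stroke. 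This is more elementary and self-contained; the paper's route buys nothing extra here, since the universal lower bound it proves in its second half already subsumes the conclusion of the cyclical-monotonicity step. Your uniqueness argument (equality forces $d_N(x,y)=d_N(x,P_{L'}(x))$ $\pi$-a.e., hence $y=P_{L'}(x)$ by the Chebyshev property, hence $\pi=\pi_0$) coincides with the paper's, and your added verifications of the continuity of $P_{L'}$ and the finite $p$-moment of $\hat{\mu}$ are details the paper leaves implicit (continuity is cited elsewhere from \cite{fletcher_chebyshev_2015}).
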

\begin{proof}
We recall that a set $\Gamma \in \Rn \times \Rn$ is called $c$-cyclically monotone for a cost function $c:\Rn \times \Rn \to \R$ if for any set of points $(x_i, y_i)_{i=1}^M \subset  \Gamma$ with $M \geq 1$, we have 
$$\sum_{i=1}^Mc(x_i, y_{i+1})\geq \sum_{i=1}^Mc(x_i, y_i),$$ where we define $x_{M+1}=x_1$. 
We first look at the set $$S=\lbrace (x, P_{L'}(x)):x \in \R^n \rbrace$$ and show that it is $c$-cyclically monotone with respect to the cost $c(x,y)=d_N^p(x,y)$.

To do that, take a set of points $(x_i, P_{L'}(x_i))_{i=1}^M \subset S$. Then, by the property of the projection operator, we have that 
$$d_N^p(x_i, P_{L'}(x_i)) \leq d_N^p(x_{i}, P_{L'}(x_{i+1})) $$ for any $1 \leq i \leq M$ (where we set $x_{M+1}=x_1$). By summing over $i$, we get that 
$$
\sum_{i=1}^Md_N^p(x_i, P_{L'}(x_i))  \leq \sum_{i=1}^Md_N^p(x_{i}, P_{L'}(x_{i+1})),   
$$  
showing that $S$ is $c$-cyclically monotone. Applying \cite[Corollary 2.6.8]{Figalli} shows that $P_{L'}$ is an optimal transport map between $\mu$ and $P_{L'\#}\mu=\hat{\mu}$, and therefore we have that 
$$d_{\mathcal{W}_p}^p(\mu, \hat{\mu})= \int_{\R^n}d_N^p(x, P_{L'}(x))d\mu(x).$$ Now, consider a measure $\nu \in \mathcal{W}_p(L', d_{N, L'})$ and $ \pi$ an optimal coupling between $\mu$ and $\nu$. Since by the definition of the projection operator, we have that  
$$d_N(x, P_{L'}(x)) \leq d_N(x,y)$$ for any $y \in L'$, we get that 
\begin{equation}
\label{eq:projmeasminim}
d_{\mathcal{W}_p}^p(\mu, \nu)=\int_{\Rn \times \Rn} d_N^p(x, y) d\pi(x, y) \geq \int_{\Rn \times \Rn} d_N^p(x, P_{L'}(x)) d\pi(x, y)=d_{\mathcal{W}_p}^p(\mu, \hat{\mu}),
\end{equation} and we have the inequality from the lemma. 

To show the uniqueness of $\hat{\mu}$, we notice that the inequality \eqref{eq:projmeasminim} becomes an equality only if $d_N^p(x, y)=d_N^p(x, P_{L'}(x))$ for $\pi$-almost any pair $(x, y)\in \Rn \times L'$. Since the projection onto $L'$ is unique, this implies that $y=P_{L'}(x)$ for $\pi$-almost any pair $(x, y)\in \Rn \times L'$; thus $\pi=(\text{Id} \times P_{L'})_{\#}\mu$, and $\nu=P_{L'\#}\mu=\hat{\mu}$, showing the uniqueness of $\hat{\mu}$. 
\end{proof}

The next lemma shows that, if a Wasserstein isometry $\Phi$ leaves all measures supported on an affine subspace $H'$ invariant, then the projection operator $P_{H'}$ and the isometry $\Phi$ commute. 

\begin{lemma}
\label{lem: projection commutates}
For $p\geq 1$, $L$ a linear subspace, $v_0 \in \Rn$ and $L'=v_0 + L$, let $$\Phi :\mathcal{W}_p(\R^n, d_N) \rightarrow \mathcal{W}_p(\R^n, d_N)$$ be an isometry such that $\Phi (\nu)=\nu$ for all  $\nu \in \mathcal{W}_p(L', d_{N, L'})$. Then we have the relation 
$$
\Phi(P_{L'\#}\mu )=P_{L'\#}\Phi(\mu)
$$
for all $\mu \in \mathcal{W}_p(\R^n, d_N)$. 
\end{lemma}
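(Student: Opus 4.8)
The plan is to exploit the variational characterization of the projection furnished by Lemma \ref{lem: proj meas minim}: for any $\mu \in \mathcal{W}_p(\R^n, d_N)$, the measure $P_{L'\#}\mu$ is the \emph{unique} element of $\mathcal{W}_p(L', d_{N,L'})$ realizing $\min_{\nu \in \mathcal{W}_p(L', d_{N,L'})} d_{\mathcal{W}_p}(\mu, \nu)$. The entire argument then reduces to observing that $\Phi$ carries the minimization problem attached to $\mu$ onto the one attached to $\Phi(\mu)$.

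First I would note that, since $\Phi$ is an isometry and $\Phi(\nu) = \nu$ for every $\nu \in \mathcal{W}_p(L', d_{N, L'})$, we have for any $\mu \in \mathcal{W}_p(\R^n, d_N)$ and any $\nu \in \mathcal{W}_p(L', d_{N, L'})$
$$
d_{\mathcal{W}_p}(\Phi(\mu), \nu) = d_{\mathcal{W}_p}(\Phi(\mu), \Phi(\nu)) = d_{\mathcal{W}_p}(\mu, \nu).
$$
Hence the functions $\nu \mapsto d_{\mathcal{W}_p}(\Phi(\mu), \nu)$ and $\nu \mapsto d_{\mathcal{W}_p}(\mu, \nu)$ coincide on $\mathcal{W}_p(L', d_{N, L'})$, so they share the same unique minimizer. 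Applying Lemma \ref{lem: proj meas minim} once to $\mu$ and once to $\Phi(\mu)$ (the latter lies in $\mathcal{W}_p(\R^n, d_N)$ since $\Phi$ preserves that space), this minimizer is $P_{L'\#}\mu$ on the one side and $P_{L'\#}\Phi(\mu)$ on the other, so $P_{L'\#}\mu = P_{L'\#}\Phi(\mu)$.

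Finally, $P_{L'\#}\mu$ is supported on $L'$, i.e.\ $P_{L'\#}\mu \in \mathcal{W}_p(L', d_{N, L'})$, so the hypothesis on $\Phi$ gives $\Phi(P_{L'\#}\mu) = P_{L'\#}\mu$. Combining with the previous identity yields
$$
\Phi(P_{L'\#}\mu) = P_{L'\#}\mu = P_{L'\#}\Phi(\mu),
$$
which is the assertion. There is no genuine obstacle: all the analytic content (optimality of $P_{L'}$ via $c$-cyclical monotonicity, and uniqueness of the closest measure) has already been absorbed into Lemma \ref{lem: proj meas minim}. The only point requiring a word of care is that $\Phi$ maps $\mathcal{W}_p(L', d_{N, L'})$ onto itself — indeed it fixes it pointwise — which is exactly what licenses invoking the variational characterization simultaneously for $\mu$ and for $\Phi(\mu)$.
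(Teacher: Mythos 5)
Your proof is correct and follows essentially the same route as the paper: both arguments rest on the uniqueness of the minimizer in Lemma \ref{lem: proj meas minim} together with the fact that $\Phi$ fixes measures supported on $L'$, so that the distance functionals $\nu \mapsto d_{\mathcal{W}_p}(\mu,\nu)$ and $\nu \mapsto d_{\mathcal{W}_p}(\Phi(\mu),\nu)$ agree on $\mathcal{W}_p(L', d_{N,L'})$. The only cosmetic difference is that you first identify $P_{L'\#}\mu = P_{L'\#}\Phi(\mu)$ and then apply the fixing hypothesis, whereas the paper shows directly that $\Phi(P_{L'\#}\mu)$ is the unique minimizer for $\Phi(\mu)$.
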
 
\begin{proof}
Let $\mu \in \mathcal{W}_p(\R^n, d_N)$, and set $\hat{\mu}=P_{L'\#}\mu$. By the assumptions, we have $$\Phi (\hat{\mu})=\hat{\mu} \in \mathcal{W}_p(L', d_{N,L'}).$$ Then $d_{\mathcal{W}_p}(\mu, \hat{\mu})=d_{\mathcal{W}_p}(\Phi (\mu), \Phi(\hat{\mu}))=d_{\mathcal{W}_p}(\Phi(\mu), \hat{\mu})$. Using Lemma \ref{lem: proj meas minim}, we have that for $\nu \in \mathcal{W}_p(L', d_{N,L'})$, 
$$d_{\mathcal{W}_p}(\Phi(\mu), \nu) = d_{\mathcal{W}_p}(\mu, \Phi^{-1}(\nu))=d_{\mathcal{W}_p}(\mu, \nu) \geq d_{\mathcal{W}_p}(\mu, \hat{\mu})= d_{\mathcal{W}_p}(\Phi(\mu),\Phi( \hat{\mu})).$$
 Thus $\Phi(\hat{\mu})$ minimizes the expression $d_{\mathcal{W}_p}(\Phi(\mu), \nu)$ among measures $\nu \in \mathcal{W}_p(L' , d_{N, L'})$. By Lemma \ref{lem: proj meas minim}, it is the unique minimizer, and we have that $\Phi (\hat{\mu})=P_{L'\#}\Phi(\mu)$, proving the lemma. 
\end{proof}

This lemma has a very useful consequence. Indeed, consider a linear subspace $L$ such that $\Phi(\mu)=\mu$ for any measure $\mu$ supported on $L$. Then we have $$P_{L \#}(\Phi(\mu))=\Phi(P_{L\#}\mu)=P_{L\#}\mu.$$ Thus, since $\mu$ and $\Phi(\mu)$ have the same projection onto $L$, we have that $\Phi(\mu)$ is supported on the set $ P_L^{-1}(\{0\})+ \text{supp}(\mu)$. 
In Proposition \ref{prop: Hyper implies hyper plus}, the main result of this section, we want to  use this property to conclude that, if the isometry $\Phi$ acts as the identity on measures supported on $L$ and on $ P_L^{-1}(\{0\})$, then $\Phi$ has to be the identity on the whole Wasserstein space over $\Rn$. This approach works very well if the set $ P_L^{-1}(\{0\})$ is a linear subspace; this happens for example if the norm $N$ is an $l_q$-norm, and we choose $L=\{t e_i | t \in \R\}$, if $e_i$ is a canonical base vector. It is then easy to see that $P_L^{-1}(\{0\})$ is the hyperplane $H_i$ defined by $x_i=0$. Unfortunately, as the following example shows, the preimage of the projection operator is not always a linear subspace. 

\begin{example}
\label{ex:l4 case}
We consider the $l_4$ norm on $\R^3$ given by $N_4(x, y, z)=\sqrt[4]{x^4 + y^4 +z^4}$, and the linear subspace $L=\{(t, t, t)| t \in \R\}$. Then $S:=P_L^{-1}(\{0\})$ is not a linear subspace. 
\end{example}
Indeed, we have that 
$$(x, y, z) \in S \Leftrightarrow x^4 + y^4 + z^4 \leq (x-t)^4 + (y-t)^4 + (z-t)^4 \text{ for all } t \in \R.$$

 Using the binomial formula, we can rewrite this inequality as 
\begin{equation}
\label{eq:in_S}
-4(x^3 + y^3 + z^3)t + 6(x^2 + y^2 + z^2)t^2 -4 (x + y + z)t^3 + 3t^4 \geq 0 \text{ for all } t \in \R.
\end{equation}
If $x^3 + y^3 + z^3 \neq 0$, we can take $t$ small enough to guarantee that this inequality does not hold; thus $x^3 + y^3 + z^3=0$ is a necessary condition to have that $(x, y, z) \in S$. 
On the other hand, if $x^3 + y^3 + z^3=0$, then \eqref{eq:in_S} is equivalent to 
\begin{equation}
\label{eq:in_S2}
3(x^2 + y^2 + z^2) -2 (x + y + z)t + \frac{3}{2}t^2 \geq 0 \text{ for all } t \in \R,
\end{equation} which is a quadratic inequality in $t$. Since the property $2xy \leq x^2 + y^2$ implies that $$(x + y + z)^2 \leq 3 (x^2 + y^2 + z^2),$$ we can estimate the discriminant of this quadratic inequality as 
$$
4(x + y + z)^2 -18(x^2+y^2+z^2) \leq -6(x^2+y^2+z^2) \leq 0,
$$
and \eqref{eq:in_S2} is always satisfied. Therefore $x^3 + y^3 + z^3=0$ is also a sufficient condition for $(x,y,z) \in S$, and $S$ is the set of points in $\R^3$ satisfying the equation $x^3 + y^3 + z^3 = 0$. This surface is obviously not a linear subspace; for example, while both $P_1=(1, -1, 0)$ and $P_2=(0, -1, 1)$ are in $S$, $P_1 + P_2$ is not in $S$. 
\begin{figure}
\centering
\includegraphics[scale=0.2]{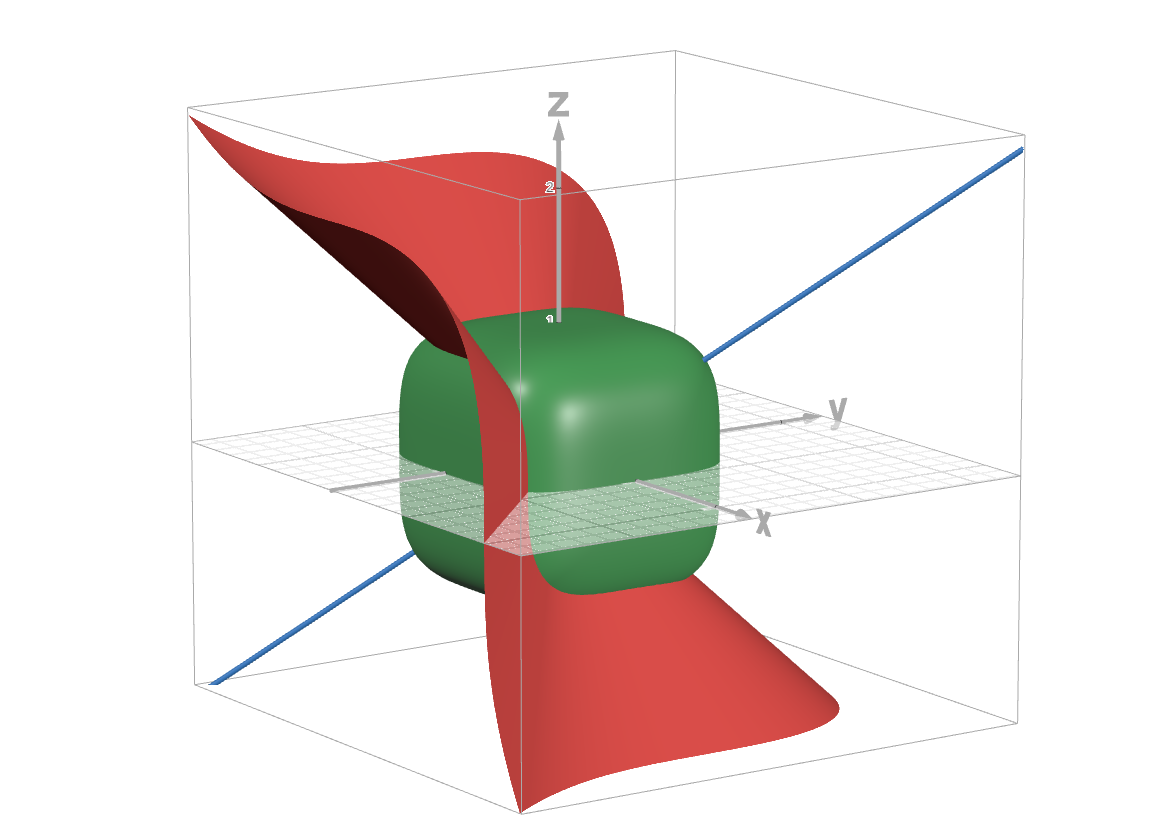}
\caption{In blue the subspace $L$, in green the unit ball of the $l_4$-norm, in red the surface $S=P_L^{-1}(\{0\})$. Created using Desmos.}
\label{fig:surfaceS}
\end{figure}

If $P_L^{-1}(\{0\})$ is not a linear subspace, the idea of Proposition \ref{prop: Hyper implies hyper plus} does not work anymore, since we need the intersection between $P_L^{-1}(\{0\})$ and $L$ to be a single point to show the finite support of $\Phi(\mu)$. Further, the proof of Theorem \ref{thm1} (which will be presented at the end of section \ref{sec:potential}) uses an induction argument on the dimension of $n$. If $P_L^{-1}(\{0\})$ is not a linear subspace, we cannot use the induction assumption to assume that $\Phi$ acts as the identity on $P_L^{-1}(\{0\})$. 

To counter this problem, we show in the next lemma that the geometry of the preimage of the projection operator restricts the possible support points of the image of a measure under an isometry. Indeed, we show that while $P_L^{-1}(\{0\})$ might not be a linear subspace, there exists a linear subspace $H \subset P_L^{-1}(\{0\})$ (which might be trivial, $H=\{0\}$) such that for any measure $\mu \in \mathcal{W}_p(\Rn, d_N)$ and any isometry $\Phi$ of the Wasserstein space fixing pointwise all measures supported on a translate of $L$, the image $\Phi(\mu)$ will be supported on the set $H + \text{supp}(\mu)$.


\begin{lemma}
\label{lem: restrict_image}
Consider a normed space $(\Rn, d_N)$ and a linear subspace $L$ such that $N$ projects uniquely on $L$. Then there exists a linear subspace $H$ such that $P_L(H)= \{0\}$ and that has the following additional property: if  $\Phi$ is an isometry of the Wasserstein space such that for $v_0 \in \Rn$, measures $\nu$ supported on $L + v_0$ are fixed i.e. $\Phi(\nu)=\nu$, then $\text{supp}(\Phi(\mu)) \subset H + \text{supp}(\mu)$ for all $\mu \in \mathcal{W}_p(\Rn, d_N)$.
\end{lemma}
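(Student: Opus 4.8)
The plan is to take $H$ to be the subspace of directions along which the metric projection onto $L$ is translation-invariant, and to exploit that $\Phi$ commutes with the projection onto \emph{every} translate of $L$. Set
$$H := \bigl\{\, h \in \Rn \ :\ P_L(x+h) = P_L(x) \ \text{ for every } x \in \Rn \,\bigr\}.$$
First I would check that $H$ is a linear subspace with $P_L(H)=\{0\}$: additivity is immediate, $-h\in H$ follows by applying the defining identity at $x-h$, and closure under scalars uses the homogeneity $P_L(\lambda x)=\lambda P_L(x)$ from Lemma \ref{lem: Chebychev} via $P_L(x+\lambda h)=\lambda P_L\big(\tfrac1\lambda x+h\big)=\lambda P_L\big(\tfrac1\lambda x\big)=P_L(x)$; putting $x=0$ (and $P_L(0)=0$) gives $H\subseteq P_L^{-1}(\{0\})=:S$. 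Using Lemma \ref{lem: Chebychev} once more (write $y=(y-P_L(y))+P_L(y)$ with $y-P_L(y)\in S$) one sees $H=\{h: h+S\subseteq S\}$, so $H$ is exactly the ``linear part'' of the cone $S$.

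Next I would record two facts. (i) By Lemma \ref{lem: proj_trans} (with $L'=L$, $v_1=-v$) one has $P_{L+v}(y)=P_L(y-v)+v$ for all $y,v$; hence for $z,w\in\Rn$, $P_{L+v}(z)=P_{L+v}(w)$ holds for \emph{all} $v\in\Rn$ \emph{iff} $z-w\in H$ --- the $H$-cosets are precisely the fibres of $x\mapsto\big(P_{L+v}(x)\big)_{v\in\Rn}$. (ii) For each $v$ the hypothesis says $\Phi$ fixes all measures on $L+v$, so Lemma \ref{lem: projection commutates} gives $P_{(L+v)\#}\Phi(\mu)=P_{(L+v)\#}\mu$ for every $\mu$ and every $v\in\Rn$.

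Now fix $\mu$ and $z\in\supp(\Phi(\mu))$. Using continuity of $P_L$ (hence of $P_{L+v}$) together with $\supp(f_\#\rho)=\overline{f(\supp\rho)}$ for continuous $f$, fact (ii) gives $P_{L+v}(z)\in\supp(P_{(L+v)\#}\mu)=\overline{P_{L+v}(\supp\mu)}$ for every $v$; equivalently $z\in\bigcap_{v\in\Rn}P_{L+v}^{-1}\big(\supp(P_{(L+v)\#}\mu)\big)$. One direction of the needed identity is immediate from (i): $H+\supp\mu$ is contained in that intersection. For the other direction, reduce --- by density of finitely supported measures and $\Wp$-continuity of $\Phi$, approximating $\mu$ from inside its support and passing to the upper Kuratowski limit of supports --- to $\mu=\sum_i a_i\delta_{w_i}$ finitely supported; then the above says that the closed sets $V_i:=\{v\in\Rn:P_{L+v}(z)=P_{L+v}(w_i)\}$ cover $\Rn$, and a Baire-category argument produces one index $i_0$ for which $P_{L+v}(z)=P_{L+v}(w_{i_0})$ holds for all $v$, i.e. $z-w_{i_0}\in H$ by (i), so $z\in H+\supp\mu$.

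The crux is precisely this last step: a single translate of $L$ only confines $z$ to a translate of the (in general non-linear) difference set $S-S$, so all translates must be used at once, and the content of the Baire step is that the non-linearity of $S$ becomes harmless because the ``matching index'' can be made locally constant in $v$ and then globally constant. Everything else --- the subspace structure of $H$, the translation identity (i), and the commutation relation (ii) --- is routine from the lemmas already proved.
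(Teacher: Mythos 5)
Your choice of $H$ (the set of $h$ with $P_L(x+h)=P_L(x)$ for all $x$) is a clean one: it is indeed a linear subspace with $P_L(H)=\{0\}$, your fact (i) correctly identifies its cosets with the common fibres of all the maps $P_{L+v}$, and it contains the subspace the paper actually constructs (the span of the displacement vectors $y-x_i$), so it would serve. The reduction to finitely supported $\mu$ and the use of Lemma \ref{lem: projection commutates} for every translate $L+v$ also match the paper's strategy. The problem is that the entire content of the lemma sits in the step you delegate to ``a Baire-category argument'', and that argument does not close. Baire only gives you one index $i_0$ for which the closed set $V_{i_0}=\{v: P_{L+v}(z)=P_{L+v}(w_{i_0})\}$ has nonempty interior; what you need is $V_{i_0}=\Rn$, since $z-w_{i_0}\in H$ requires the identity $P_L(u+(z-w_{i_0}))=P_L(u)$ for \emph{every} $u$, not for $u$ in an open set. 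The set $\{u: P_L(u+h)=P_L(u)\}$ is invariant under translation by $L$ but has no evident scaling invariance (homogeneity of $P_L$ rescales $h$ along with $u$), so ``open'' does not upgrade to ``everything'' for free. The fallback ``locally constant index, hence globally constant'' also fails as stated: the sets $V_i$ may overlap, so the matching index is not a well-defined locally constant function, and local constancy at an arbitrary $v$ would require the points $P_{L+v}(w_1),\dots,P_{L+v}(w_M)$ to be separated for \emph{that} $v$, which the genericity you can impose on $\mu$ only guarantees for the finitely many translates through the support points.

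This is exactly the difficulty the paper's proof is built to overcome. It first proves a local statement only at the distinguished translate $L+x_i$ (its equation \eqref{eq:proj_equiv}: for small $v$, the fibre of $y$ and the fibre of $x_i$ through $L+x_i$ agree), using continuity of the projection plus the separation of the $P_{L+x_i}(x_j)$. It then globalizes not by a connectedness argument in $v$ but by exploiting the cone structure of the fibres via Lemma \ref{lem: Affine_Chebychev}: the local agreement of fibres blows up to the statement that the whole plane spanned by $y-x_i$ and any fibre direction lies in the fibre (equation \eqref{eq:plane_projects}), which is what forces $P_L(\lambda_1(y-x_i)+\lambda_2 v)=0$ for all scalars and hence puts $y-x_i$ into a genuine linear subspace. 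To repair your proof you would need to supply precisely this homogeneity-based local-to-global step; without it the claim that each $z\in\supp(\Phi(\mu))$ matches a single $w_{i_0}$ across all translates is unproven, and the lemma does not follow.
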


\begin{proof}
Before we start the proof, we can notice that for $n=2$ this lemma greatly simplifies. Indeed, in $\R^2$, the only possible linear subspaces are $\{0\}$, $\R^2$ and lines passing through $0$. In the first two cases, we set $H=\R^2$ or $H=\{0\}$, and the Lemma is trivially true. If the linear subspace $L$ is a line, then by Lemma \ref{lem: Chebychev} the set $P_L^{-1}(0)$ is also a line passing through $0$. Setting $H:=P_L^{-1}(0)$, we immediately have that $H$ is a linear subspace and that $P_L(H)=\{0\}$. The condition  $\text{supp}(\Phi(\mu)) \subset H + \text{supp}(\mu)$ is a simple application of Lemma \ref{lem: projection commutates}. Thus we will now assume that $n\geq 3$. 

We also assume in this proof that any isometry $\Phi$ of the Wasserstein space $\mathcal{W}_p(\Rn, d_N)$ that we consider fixes measures supported on a translated subspace $L$, i.e. $\Phi$ fulfills the assumption of the Lemma. 

We consider the set 
\begin{align*}
\mathcal{G}=\{ \sum_{k=1}^M a_k \delta_{x_k} : M \geq 1, \sum_{k=1}^M a_k=1, x_k \in \Rn \text{ for all } 1 \leq k \leq M \\
\text{ and if } k \neq k',\text{ then } P_{L + x_i}(x_k) \neq P_{L+x_i}(x_{k'}) \text{ for all } i \leq M \}.
\end{align*} 
It is easy to see that this set is dense in $\mathcal{W}_p(\Rn, d_N)$. 

Take $\mu=\sum_{k=1}^M a_k \delta_{x_k} \in \mathcal{G}$.
By Lemma \ref{lem: projection commutates}, for any $v\in \Rn$,   
$$
 P_{L+v\#}(\mu)=\Phi (P_{L+v\#}(\mu))=P_{L+v\#}(\Phi (\mu)).
$$ 
We can then write 
\begin{equation}
\label{eq: proj_v_phi_mu}
P_{L+v\#}(\Phi (\mu))=P_{L+v\#}(\mu)=\sum_{k=1}^Ma_k \delta_{P_{L+v}(x_k)}.
\end{equation}
 Thus, for every $v \in \Rn$ and every point $y \in \text{supp}(\Phi(\mu))$, there exists $k \leq M$ such that $P_{L+v}(y)=P_{L+v}(x_k)$.

Before we continue, we give a brief overview of the steps of the proof. As a first step, we will, for every $i \leq M$, look at the space $S_i=P_{L+x_i}^{-1}(x_i)$, i.e. the set of points such that their projection onto the affine subspace $L+x_i$ is exactly $x_i$. Then, for any point $y \in \text{supp}(\Phi(\mu))$ such that $y \in S_i$, we show that locally, $S_i$ looks the same around $x_i$ as it does around $y$. This means that, for any $v\in \Rn$ small enough, $x_i + v \in S_i$ if and only if $y + v \in S_i$. This is exactly the statement of equation \eqref{eq:proj_equiv}. 

Since the set $S_i$ is homogeneous with center $x_i$, we can extend this local behaviour into a global statement. Thus, as our second step, we show equation \eqref{eq:plane_projects}, which tells us that (unless $y=x_i$) the projection set $S_i$ does not only contain lines but also planes spanned by $y-x_i$ and by any $v \in S_i$. This result in particular restricts the possible support points of $\Phi(\mu)$, since support points can only exists if the set $S_i$ (which up to translation is equal to the set $P_L^{-1}(0)$, and thus only depends on the metric and on the linear subspace $L$) contains these planes; in particular, if $S_i$ does not contain any planes, then necessarily $y=x_i$.

 In the third step, we show that any $y \in \text{supp}(\Phi(\mu))$ can only be associated to one support point of $\mu$, i.e. there exists for any $y$ a unique $i \leq M$ such that $P_{L+v}(y)=P_{L+v}(x_i)$, which is independent of $v$. 
 
To finish the proof we construct $H$ as the span of all possible vectors $y-x_i$ such that for some measure $\mu \in \mathcal{G}$ and some isometry $\Phi$, $x_i \in \text{supp}(\mu)$ and $y \in \text{supp}(\Phi(\mu))$, with $y \in S_i$. By the properties found in the first three steps, the linear subspace $H$ will have the required properties of the Lemma. In particular, if $P_L^{-1}(0)$ does not contain any planes, then for all $y \in \text{supp}(\Phi(\mu))$ there exists $x_i \in \text{supp}(\mu)$ such that $y=x_i$, and thus $H=\{0\}$ is trivial.   

We now continue with the proof. Fix $i \leq M$ and consider the affine subspace $L'=L+x_i$. Then we have trivially that $P_{L'}(x_i)=x_i$ and by the assumptions on $\mathcal{G}$ that $P_{L'}(x_j) \neq x_i $ for all $j \leq M, j\neq i$.

Consider a point $y \in \text{supp}(\Phi(\mu))$ such that $P_{L'}(y)=x_i$. By the continuity of the projection operator (Corr 2.20 of \cite{fletcher_chebyshev_2015}) and equation \eqref{eq: proj_v_phi_mu}, we can show that, if $v\in \Rn$ is small enough, then 
\begin{align}
\label{eq:proj_equiv}
P_{L'}(y+v)=P_{L'}(y) \Leftrightarrow  P_{L'}(x_i+v)=P_{L'}(x_i)=x_i.
\end{align}
For $y=x_i$, this result is trivial. If $y \neq x_i$, this result follows from the continuity of the projection operator and from the previous lemmata. 

Indeed, for any $\varepsilon >0$, there exists $\delta_0 >0$ such that, if $N(v)<\delta_0$, then $$N(P_{L'}(y +v)-P_{L'}(y))\leq \varepsilon /4.$$ By lemma \ref{lem: proj_trans}, we have for every $v $ such that $N(v) < \delta :=\min (\delta_0, \varepsilon /4)$ that $$N(P_{L'-v}(y)-P_{L'}(y)) \leq \varepsilon /2.$$ Choosing $\varepsilon$ such that $d_N(P_{L'}(x_i),P_{L'}(x_j))> \varepsilon$ for any $j \neq i$, we have that for $j \neq i$,
\begin{align*}
N[P_{L'-v}(y)-P_{L'-v}(x_j)] \geq N[P_{L'-v}(x_i)-P_{L'-v}(x_j)]-   N[P_{L'-v}(y)-P_{L'-v}(x_i)] \\
 \geq N[P_{L'}(x_i ) -P_{L'}(x_j)]- N[P_{L'}(x_i + v)-P_{L'}(x_i)]-N[P_{L'}(x_j+v)-P_{L'}(x_j)] \\
-   N[P_{L'}(y+v)- P_{L'}(y)]- N[P_{L'}(x_i+v)-P_{L'}(x_i)] \\
 >\varepsilon - \varepsilon/4 - \varepsilon/4 -\varepsilon/4 -\varepsilon/4=0.
\end{align*}
Thus $P_{L'-v}(y) \neq P_{L'-v}(x_j)$ when $j \neq i$.
But equation \eqref{eq: proj_v_phi_mu} tells us that $P_{L'-v}(y)=P_{L'-v}(x_j)$ for some $j\leq M$. Thus we have that, for any $v$ small enough, if $y\in \supp(\Phi(\mu))$ and if $P_{L'}(y)=P_{L'}(x_i)$, then $P_{L'-v}(y)=P_{L'-v}(x_i)$. As a consequence of this, assume that $v$ is small enough and that $P_{L'}(x_i+v)=P_{L'}(x_i)$. Then we have 
\begin{align*}
P_{L'}(y+v)=P_{L'-v}(y) +v = P_{L'-v}(x_i)+v=P_{L'}(x_i)=P_{L'}(y). 
\end{align*} The reverse is also true. Indeed, if $P_{L'}(y+v)=P_{L'}(y)$, then 
\begin{align*}
P_{L'}(x_i+v)=P_{L'-v}(x_i) -v = P_{L'-v}(y)-v=P_{L'}(y)=P_{L'}(x_i),
\end{align*}
showing equation \eqref{eq:proj_equiv}.

With equation \eqref{eq:proj_equiv} we can now show that, if $v \in \Rn \setminus \{0\}$ is such that $P_{L'}(x_i + v)=x_i$, then $P_{L'}(x_i + z)=x_i$ holds for any $z$ in the set generated by $y-x_i$ and $v$ (which is a plane, unless $y=x_i$ or $v$ is colinear to $x_i-y$). 

We first notice that, if $P_{L'}(x_i + v)=x_i$, then by Lemma \ref{lem: Affine_Chebychev} we get $P_{L'}(x_i + \lambda' v)=x_i$ for any $\lambda' \in \R$; thus we can assume w.l.o.g. that $v$ is small enough. Then we have by equation \eqref{eq:proj_equiv} that $P_{L'}(y + v)=x_i$. Again by Lemma \ref{lem: Affine_Chebychev}, $P_{L'}(y +  \lambda v)=x_i$ still holds for any $|\lambda| \leq 1$. A final application of Lemma \ref{lem: Affine_Chebychev} then gives that 
\begin{equation}
\label{eq:alpha_projection}
P_{L'}(x_i + \alpha (y-x_i +  \lambda v))=x_i
\end{equation} for any $\alpha \in \R$. 

Thus, if $z=\lambda_1 (y-x_i) + \lambda_2 v \neq 0$ and $\delta_1 = \min \{\frac{\delta}{2N(z)}, \frac{1}{2|\lambda_1| + 2|\lambda_2|} \}$, set $z'=\delta_1 z$.
Notice that by Lemma \ref{lem: Affine_Chebychev} and equation \eqref{eq:proj_equiv},
$$
P_{L'}(x_i + z)=x_i \Leftrightarrow P_{L'}(x_i +z')=x_i \Leftrightarrow P_{L'}(y + z')=x_i.
$$  
Since 
$$ y + z'= y + \delta_1(\lambda_1 (y-x_i) + \lambda_2 v)=x_i + (1 +  \delta_1 \lambda_1)(y-x_i) +  \delta_1 \lambda_2 v,$$  
we can write $y+z'=x_i +\alpha (y-x_i) + \lambda \alpha v$ (where the definition of $\delta_1$ guarantees that $|\lambda|<1$). Using equation \eqref{eq:alpha_projection}, this shows that $P_{L'}(y + z')=x_i$ and
\begin{equation}
\label{eq:plane_projects}
P_{L'}(x_i + z )=x_i.
\end{equation}

This also implies that if, for $y \in \text{supp}(\Phi(\mu))$, $P_{L+x_i}(y)=x_i$ and $P_{L+x_j}(y)=x_j$, then $i=j$. Indeed, if $P_{L+x_i}(y)=x_i$, since for $x_i^j:=P_{L+x_j}(x_i)$,  $P_{L+x_i}(x_i^j)=x_i$, by the above result we have that 
$$
P_{L+x_i}(x_i + \lambda_1 y + \lambda_2 x_i^j)=x_i. 
$$ 
But then 
$$
P_{L+x_j}(y)= P_{L+x_j -(x_i^j-x_i)}(y-(x_i^j-x_i)) + (x_i^j-x_i)=P_{L+x_i}(x_1  + y -x_i^j)+ (x_i^j-x_i)=x_i^j,
$$
where we used that $x_j-x_i^j \in L$. 
This shows that, for all $j \leq M$, if $P_{L+x_i}(y)=x_i$, then $P_{L+x_j}(y)=P_{L+x_j}(x_i) \neq x_j$ unless $i=j$. 
Thus, if we set $Y_{x_i}=P^{-1}_{L+x_i}(x_i) \cap \text{supp}(\Phi(\mu))$, then for any $i \neq j$, the intersection $Y_{x_i} \cap Y_{x_j}$ will be empty. 
From equation \eqref{eq: proj_v_phi_mu} we also have that $\Phi(\mu)(Y_{x_i})=\mu(x_i)$. Since
$$
1=\sum_{i=1}^M \mu(x_i)=\sum_{i=1}^M \Phi(\mu)(Y_{x_i}),
$$
we have that $\sqcup_{i=1}^M Y_{x_i}= \text{supp}(\Phi(\mu))$. Thus, for all $y \in \text{supp}(\Phi(\mu))$, there exists $i \leq M $ with $P_{L+x_i}(y)=x_i$. 

We are now ready to finish the argument.  
Since $P_{L + x_i}(z)=x_i \Leftrightarrow P_{L}(z-x_i)=0$, for any isometry of the Wasserstein space $\Phi$ and any measure $\mu \in \mathcal{G}$, if $y \in \text{supp}(\Phi(\mu))$, then there exists $x_i \in \text{supp}(\mu)$ such that $y \in Y_{x_i}$ and 
\begin{equation}
\label{eq:onequation}
P_{L}(y-x_i)=0.
\end{equation} 
 Then, for any $v \in \Rn$ such that $P_L(v)=0$, we have by equation \eqref{eq:plane_projects} that for any $\lambda_1, \lambda_2 \in \R$,  
\begin{equation}
\label{eq:twoequation}
P_{L}(\lambda_1 (y-x_1)+ \lambda_2v)=0.
\end{equation} We consider the set 
\begin{align*}
 V=\{v \in \Rn | \exists \Phi \in \text{Isom}(\mathcal{W}_p(\Rn, d_N)), \, \exists \mu \in \mathcal{G},  \\
 \exists y \in \text{supp}(\Phi(\mu)), \exists x_i \in \text{supp}(\mu) \text{ such that } y \in Y_{x_i}, \, v=y-x_i \}. 
 \end{align*}
 In words, $V$ is the set of vectors $v$ such that there exists an isometry $\Phi$ of the Wasserstein space (which fixes measures supported on translated subspaces of $L$) and a measure $\mu \in \mathcal{G}$ such that, for some $y$ in the support of $\Phi(\mu)$, if $P_{L+x_i}(y)=x_i$  for some $x_i \in \text{supp}(\mu)$, then $v=y-x_i$. 
 
 We now consider the set $H = \text{span}(V)$. It is clear that $H$ is a linear subspace. To see that $P_L(H)=0$, we first notice that if $v \in V$, there exists an isometry $\Phi$, $\mu\in \mathcal{G}$, $x_i \in \text{supp}(\mu)$ and $y \in \text{supp}(\Phi(\mu))$ such that $y \in Y_{x_i}$ $v=y-x_i$ and \eqref{eq:onequation} implies that $P_L(v)=0$.  Then, for $v_1, v_2 \in V$, setting  $z=\lambda_1 v_1 + \lambda_2 v_2$, equation \eqref{eq:twoequation} implies that $P_L(z)=0$. Given a basis of $H$ composed of $v_{y_1}, \dots, v_{y_m}$, an induction argument shows that for all $z \in H$ we have $P_{L}(z)=0$. 
 
Using the same method, we have that, for any $h \in H$,
 \begin{equation}
 \label{eq:same_plane}
 P_L(h + x_i)=P_L(h + (x_i -P_L(x_i)) + P_L(x_i))=P_L(h + (x_i-P_L(x_i))) + P_L(x_i)=P_L(x_i)
 \end{equation}
 where we used Lemma \ref{lem: Chebychev} and that $P_L(x_i -P_L(x_i))=P_L(x_i) -P_L(x_i)=0$. 

Thus we have a linear subspace $H$ such that $P_{L}(H)=\{0\}$ and, for any isometry $\Phi$ and $\mu \in \mathcal{G}$, if $y \in \text{supp}(\Phi(\mu))$, then there exists $x_i \in \text{supp}(\mu)$ such that $y-x_i \in H$. A density argument shows the result for all $\mu \in \mathcal{W}_p(\Rn, d_N)$, proving the Lemma.
\end{proof}

To see an application of this Lemma, we can look again at Example \ref{ex:l4 case} and try to explicitly determine $H$. Consider an isometry $\Phi$, a measure $\mu \in \mathcal{W}_p(\Rn, d_4)$, $x_i \in \text{supp}(\mu)$ and a point $y\in \text{supp}(\Phi(\mu))$ such that $P_L(y-x_i)=0$. Then, for any $v$ such that $P_L(v)=0$, if we consider the linear subspace $K=\text{span}((y-x_i), v)$, by equation \eqref{eq:twoequation} we have $P_L(K)=0$, and the preimage $P_L^{-1}(\{0\})$ must contain the linear subspace $K$. If $x_i\neq y$, we can choose $v$ such that $x_i-y$ and $v$ are not collinear, and $K$ is a plane. Since in the case presented in Example \ref{ex:l4 case} the preimage $P_L^{-1}(\{0\})$ does not contain any plane, this means that $y=x_i$; in other words, we have $V=\{0\}=H$, and Lemma \ref{lem: restrict_image} implies that if an isometry $\Phi$ of the Wasserstein space acts as the identity on measures supported on $L+v_0$, then for any measure $\mu,$ $\text{supp}(\Phi(\mu))\subseteq \text{supp}(\mu)$ holds, i.e. the support of $\Phi(\mu)$ is contained in the support of $\mu$ (and it is easy to see that actually $\mu$ and $\Phi(\mu)$ have the same support). 

We are now ready to prove the main proposition of this section, in which we show that rigidity on certain subspaces of $\Rn$ is enough to prove rigidity on the whole space. This Proposition generalizes Proposition 2.1 from \cite{BKTV}, which showed the result in two dimensions for the special case of the maximum norm. The main difference in our proof comes from the much more general nature of the preimage of a projection operator, which in 2 dimensions is simply a line. We recommend that readers who want to understand this proof in details read first the Proposition 2.1 from \cite{BKTV}, which presents the ideas of the proof in a simpler setting.  

\begin{proposition}
\label{prop: Hyper implies hyper plus}
For $p \geq 1$, assume that there exists a linear subspace $L \subset \Rn$ such that $N$ projects uniquely on $L$ and let $\Phi: W_p(\Rn,d_N) \to W_p(\Rn,d_N)$ be an isometry such that $\Phi(\mu)=\mu$ for every measure $\mu$ supported on a translate of L. 
 Assume also that for the linear subspace $H \subset P_{L}^{-1}(0)$ given by Lemma \ref{lem: restrict_image}, $N$ projects uniquely onto $H$ and, for any $\nu \in \mathcal{W}_p(H, d_{N, H})$, we have $\Phi(\nu)=\nu$. Then $\Phi (\mu)=\mu$ for all $\mu \in \mathcal{W}_p(\Rn, d_{N})$.
\end{proposition}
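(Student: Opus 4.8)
The plan is to prove $\Phi(\mu)=\mu$ first for measures in the dense class $\mathcal{G}$ of Lemma~\ref{lem: restrict_image} and then pass to the general case by density. Two consequences of the hypotheses are immediate: since every point $x\in\Rn$ lies on the translate $L+x$ of $L$, the Dirac mass $\delta_x$ is supported on a translate of $L$, so $\Phi(\delta_x)=\delta_x$ for all $x$. Moreover, the hypothesis that $\Phi$ fixes measures on $H$ can be upgraded to \emph{all translates of $H$}: if $\sigma$ is supported on $H+w$, then Lemma~\ref{lem: restrict_image} gives $\supp(\Phi(\sigma))\subseteq H+\supp(\sigma)\subseteq H+w$, while Lemma~\ref{lem: projection commutates} applied with the linear subspace $H$ yields $P_{H\#}\Phi(\sigma)=\Phi(P_{H\#}\sigma)=P_{H\#}\sigma$ (the last equality since $P_{H\#}\sigma$ is supported on $H$); because by Lemma~\ref{lem: Chebychev} the map $P_H$ restricted to the coset $H+w$ is the bijection $w+h\mapsto P_H(w)+h$ and both $\sigma$ and $\Phi(\sigma)$ live on $H+w$, this forces $\Phi(\sigma)=\sigma$.

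Next I would fix $\mu=\sum_{k=1}^M a_k\delta_{x_k}\in\mathcal{G}$. By Lemma~\ref{lem: restrict_image}, $\supp(\Phi(\mu))\subseteq\bigcup_k(x_k+H)$; using the defining property of $\mathcal{G}$ together with equation~\eqref{eq:same_plane} one checks that the cosets $x_k+H$ are pairwise disjoint, and commuting $\Phi$ with the projections $P_{L+x_k}$ as in the proof of Lemma~\ref{lem: restrict_image} shows $\Phi(\mu)(x_k+H)=a_k$. Hence $\Phi(\mu)=\sum_k a_k\mu_k$ with each $\mu_k$ a probability measure supported on $x_k+H$, and it remains to show $\mu_k=\delta_{x_k}$.

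For this final step I would use the rigidity on translates of $H$ established above. Fixing $i$, Lemma~\ref{lem: projection commutates} with the affine subspace $H+x_i$ gives $P_{H+x_i\#}\Phi(\mu)=P_{H+x_i\#}\mu=\sum_k a_k\delta_{P_{H+x_i}(x_k)}$. By Lemma~\ref{lem: Affine_Chebychev}, $P_{H+x_i}$ restricts on the coset $x_k+H$ to the translation $x_k+h\mapsto P_{H+x_i}(x_k)+h$, and to the identity when $k=i$, so the left-hand side equals $a_i\mu_i+\sum_{k\neq i}a_k(T_{c_{ik}})_{\#}\mu_k$ with $c_{ik}=P_{H+x_i}(x_k)-x_k$. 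Comparing this with the purely atomic right-hand side forces each $\mu_k$ to be purely atomic and confines the atoms of $\mu_i$ to a finite set; running the same comparison over a family of translates $H+v$ chosen, via the genericity built into $\mathcal{G}$, so that $x_1,\dots,x_M$ and all candidate atoms project to pairwise distinct points on $H+v$ then leaves $\mu_i=\delta_{x_i}$ as the only possibility. A density argument upgrades $\Phi=\mathrm{id}$ from $\mathcal{G}$ to all of $\mathcal{W}_p(\Rn,d_N)$.

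The main obstacle I expect is exactly this last matching step. Since $\Phi$ need not respect the decomposition $\mu=\sum_k a_k\delta_{x_k}$, the relations above only assert that the masses of the slices $\mu_k$ redistribute among the parallel cosets $x_k+H$ according to a stochastic-type identity, and one must show that no genuine redistribution occurs; this requires a careful choice of which translates $H+v$ to project onto and of the genericity conditions imposed on $\supp(\mu)$. In the two-dimensional situation of \cite{BKTV} the subspace $H$ is a line, so the cosets are lines and this bookkeeping is essentially trivial, whereas here $H$ (and hence each $x_k+H$) may be higher dimensional, which is the source of the extra difficulty.
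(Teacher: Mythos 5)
Your setup is sound and, up to a point, parallels the paper's: the upgrade of the hypothesis to all translates of $H$ (via Lemma \ref{lem: restrict_image} together with the fact that $P_H$ restricted to a coset $H+w$ is the injection $w+h\mapsto P_H(w)+h$ from Lemma \ref{lem: Chebychev}) is correct, and your conclusion that $\Phi(\mu)=\sum_k a_k\mu_k$ with each $\mu_k$ a probability measure confined to finitely many grid points of $x_k+H$ is exactly where the paper's proof also arrives (there the grid points are written $x_{k,k'}$ and $\Phi(\mu)=\sum_{k,k'}b_{k,k'}\delta_{x_{k,k'}}$ with prescribed row and column sums). The genuine gap is the step you yourself flag, and your proposed remedy --- comparing $P_{H+v\#}\Phi(\mu)$ with $P_{H+v\#}\mu$ over a family of translates $H+v$ --- provably cannot close it. On each coset $x_k+H$ the map $P_{H+v}$ acts as the translation $x_k+h\mapsto P_{H+v}(x_k)+h$ by Lemma \ref{lem: Affine_Chebychev}; if the projection onto $H$ is additive (e.g.\ a linear projection, which is precisely the situation for the $l_q$-norms and coordinate subspaces used in Theorem \ref{thm2}), then $P_{H+v}(x_{k,k'})$ depends only on $k'$ and $v$, so $P_{H+v\#}\Phi(\mu)=\sum_{k'}\bigl(\sum_k b_{k,k'}\bigr)\delta_{P_{H+v}(x_{k'})}$ reduces to the column-sum constraints already known, for \emph{every} $v$. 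Already for $M=2$ the one-parameter family $b_{11}=a_1-t$, $b_{12}=b_{21}=t$, $b_{22}=a_2-t$ satisfies all projection constraints onto every translate of $L$ and of $H$; no choice of translates, and no genericity of the weights or of the support, can detect $t$. One must use the metric, not only the fingerprints.

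This is exactly what the paper's proof supplies and what is missing from your argument. The paper perturbs $\mu$ by moving a small mass $a_0$ from $x_1$ to a nearby off-grid point $x_0$, proves that the resulting $\mu'$ is the \emph{unique} measure with the resulting pair of projections onto $L$ and $H$ that lies at distance $a_0^{1/p}h_0$ from $\mu$ and is supported in the admissible grid, and then, assuming some row of $(b_{k,k'})$ has two nonzero entries, exhibits two \emph{distinct} measures $\nu_1'\neq\nu_2'$ with that same fingerprint and the same distance from $\Phi(\mu)$; applying $\Phi^{-1}$ forces $\nu_1'=\nu_2'$, a contradiction. A secondary, fixable issue: the class $\mathcal{G}$ from Lemma \ref{lem: restrict_image} only encodes distinctness of projections onto translates of $L$, so it does not carry the genericity your bookkeeping invokes; the paper works instead with the finer dense class $\mathcal{F}$ requiring pairwise distinct weights and pairwise distinct projections onto both $L$ and $H$, and you would need the same refinement.
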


\begin{proof}

We start with the case $H=0$. We build the subset $\mathcal{F}_0 \subset \mathcal{W}_p(\Rn, d_N)$ as follows: 
\begin{align*}
\mathcal{F}_0=\{ \sum_{k=1}^M a_k \delta_{x_k} : M \geq 1, \sum_{k=1}^M a_k=1, x_k \in \Rn \text{ for all } 1 \leq k \leq M \\
\text{ and if } k \neq k',\text{ then } a_k \neq a_{k'} \text{ and } P_{L}(x_k) \neq P_{L}(x_{k'})  \}.
\end{align*}
It is well known (see e.g. \cite[Theorem 6.18]{Villani}) that measures in $\mathcal{W}_p(\Rn, d_{N})$ can be approximated by a finite combination of Dirac masses. Since it is easy to see that measures with finite support can be approximated by measures of $\mathcal{F}_0$, we have that $\mathcal{F}_0$ is dense in $\mathcal{W}_p(\Rn, d_N)$. 

Consider $\mu\in \mathcal{F}_0$, $\mu=\sum_{k=1}^M a_k \delta_{x_k}$. Since we consider the case $H=\{0\}$, Lemma \ref{lem: restrict_image} says that $\text{supp}(\mu)=\text{supp}(\Phi(\mu))$, and we can write $\Phi(\mu)=\sum_{k=1}^M b_k \delta_{x_k}$. 

We now use Lemma \ref{lem: projection commutates} to show that 
$$\sum_{k=1}^Mb_k \delta_{P_{L}(x_k)}=P_{L\#}(\Phi (\mu))=P_{L\#}(\mu)=\sum_{k=1}^Ma_k \delta_{P_{L}(x_k)}.$$
Since by the condition on $\mathcal{F}_0$, $\delta_{P_{L}(x_k)}  \neq \delta_{P_{L}(x_{k'})}$ when $k \neq k'$, this implies that $b_k=a_k$ for any $1 \leq k \leq M$, and thus $\Phi(\mu)=\mu$. Since $\mathcal{F}_0$ is dense in $\mathcal{W}_P(\Rn, d_N)$, this shows the Proposition when $H=\{0\}$. 

When $H$ is  non-trivial, we want to use the same idea. However, we need to show that even in this case we have $\text{supp}(\mu)=\text{supp}(\Phi(\mu))$; this will be the main part of the rest of the proof. 

We start by building the subset $\mathcal{F} \subset \mathcal{W}_p(\Rn, d_N)$ as follows: 
\begin{align*}
\mathcal{F}=\{ \sum_{k=1}^M a_k \delta_{x_k} : M \geq 1, \sum_{k=1}^M a_k=1, x_k \in \Rn \text{ for all } 1 \leq k \leq M \\
\text{ and if } k \neq k',\text{ then } a_k \neq a_{k'} \text{ and } P_{L}(x_k) \neq P_{L}(x_{k'}), P_{H}(x_k) \neq P_{H}(x_{k'})  \}.
\end{align*}

We show that $\mathcal{F}$ has three properties. First we check that $\mathcal{F}$ is dense in $\mathcal{W}_p(\Rn, d_{N})$. It is well known (see e.g. \cite[Theorem 6.18]{Villani}) that measures in $\mathcal{W}_p(\Rn, d_{N})$ can be approximated by a finite combination of Dirac masses. Since it is clear that any finitely supported measure can be approximated by measures of $\mathcal{F}$, this shows the required density. 

For the second property we check that the maps $P_{L \#}$ and $P_{H \#}$ from $\mathcal{F}$ to the set of measures supported on $L$ and $H$ respectively are injective. 
 We show the injectivity for the map $P_{L \#}$, the second case is similar. Assume that $ \mu_1, \mu_2 \in \mathcal{F}$ with $P_{L\#}\mu_1=P_{L\#}\mu_2$.  We write 
$$
\mu_1= \sum_{k=1}^{M_1}a^1_{k}\delta_{x^1_{k}} \text{ and } \mu_2= \sum_{k=1}^{M_2}a^2_{k}\delta_{x^2_{k}}.
$$
Using that $P_{L\#}\mu_1=P_{L\#}\mu_2$, we get
$$
\sum_{k=1}^{M_1}a^1_{k}\delta_{P_{L}(x^1_{k})}= \sum_{k=1}^{M_2}a^2_{k}\delta_{P_{L}(x^2_{k})}.
$$ 
Due to the conditions of the set $\mathcal{F}$, we have that $M_1=M_2$, $a^1_{k}=a^2_{k}$ and $x^1_{k}=x^2_{k}$ for $1 \leq k \leq M_1$. Therefore $\mu_1=\mu_2$, and $ \mu \rightarrow P_{L\#}\mu$ (and by the same reasoning $ \mu \rightarrow P_{H\#}\mu$) is injective on the set $\mathcal{F}$. 

Finally we want to show that $\Phi (\mathcal{F}) \subset \mathcal{F}$. Take $\mu=\sum_{k=1}^M a_k \delta_{x_k} \in \mathcal{F}$.

By Lemma \ref{lem: projection commutates},   
$$
 P_{L\#}(\mu)=\Phi (P_{L\#}(\mu))=P_{L\#}(\Phi (\mu)) \text{ and } P_{H\#}(\mu)=\Phi (P_{H\#}(\mu))=P_{H\#}(\Phi (\mu)).
$$ 
We can then write 
\begin{equation}
\label{eq: proj_phi_mu}
P_{L\#}(\Phi (\mu))=P_{L\#}(\mu)=\sum_{k=1}^Ma_k \delta_{P_{L}(x_k)}
\end{equation} and similarly
\begin{equation}
\label{eq: proj_phi_mu_also}
P_{H\#}(\Phi (\mu))=P_{H\#}(\mu)=\sum_{k=1}^Ma_k \delta_{P_{H}(x_k)}.
\end{equation}

By the Lemma \ref{lem: restrict_image} and equation \eqref{eq: proj_phi_mu_also}, we have that $\Phi(\mu)$ is supported on the set 
$$
S= \left(\bigcup_{i=1}^M (H + x_i)\right) \bigcap \left( \bigcup_{i=1}^M P_{H}^{-1}(P_H(x_i)) \right). 
$$
Since $H$ is a linear subspace, given a pair $(k, k')$, the intersection $(H+ x_k) \cap P_H^{-1}(P_H(x_{k'}))$ is a unique point, which we denote by $x_{k, k'}.$ By the definition of the set $S$ and equation \eqref{eq:same_plane} we have that, for any pair $(k,k')$, $P_L(x_{k, k'})=P_L(x_k)$ and $P_H(x_{k, k'})=P_H(x_{k'})$. 

Thus $\Phi(\mu)$ is supported on the points of the form $x_{k, k'}$, and we can write
$$
\Phi(\mu)=\sum_{k=1}^M \sum_{k'=1}^M b_{k, k'} \delta_{x_{k,k'}}.
$$ 

Figure \ref{fig:M2} gives an example of a measure $\mu$ and an (a priori) possible image measure\footnote{ We show in this proof that the right measure cannot actually be the image of the left measure under an isometry.}.
\begin{figure}
\centering
\begin{tikzpicture}[scale=0.6]
\draw[help lines, color=gray!30, dashed] (0,0) grid (9,7);
\draw[->,ultra thick] (0,0)--(9,0) node[right]{$L$};
\draw[->,ultra thick] (0,0)--(0,7) node[above]{$H$};

\draw[black, thin] (0,5) -- (9,5);
\draw[black, thin] (0,4) -- (9,4);
\draw[black, thin] (0,2) -- (9,2);

\draw[black, thin] (2,0) -- (2,7);
\draw[black, thin] (4,0) -- (4,7);
\draw[black, thin] (8,0) -- (8,7);

\filldraw[red] (2,2) circle (3pt) node[anchor=north east]{$x_1$};
\filldraw[red] (4,5) circle (3pt) node[anchor=north east]{$x_2$};
\filldraw[red] (8,4) circle (3pt) node[anchor=north east]{$x_3$};
\end{tikzpicture}
\begin{tikzpicture}[scale=0.6]
\draw[help lines, color=gray!30, dashed] (0,0) grid (9,7);
\draw[->,ultra thick] (0,0)--(9,0) node[right]{$L$};
\draw[->,ultra thick] (0,0)--(0,7) node[above]{$H$};

\draw[black, thin] (0,5) -- (9,5);
\draw[black, thin] (0,4) -- (9,4);
\draw[black, thin] (0,2) -- (9,2);

\draw[black, thin] (2,0) -- (2,7);
\draw[black, thin] (4,0) -- (4,7);
\draw[black, thin] (8,0) -- (8,7);

\filldraw[blue] (2,5) circle (2pt) node[anchor=north east]{$x_{1, 2}$};
\filldraw[blue] (4,2) circle (2pt) node[anchor=north east]{$x_{2, 1}$};
\filldraw[blue] (4,5) circle (2pt) node[anchor=north east]{$x_{2, 2}$};
\filldraw[blue] (2,2) circle (2pt) node[anchor=north east]{$x_{1, 1}$};
\filldraw[blue] (8,4) circle (3pt) node[anchor=north east]{$x_{3, 3}$};
\end{tikzpicture}
\caption{An example for a measure $\mu \in \mathcal{F}$ (left) and a possible image measure $\Phi(\mu)$ (right).}
\label{fig:M2}
\end{figure}
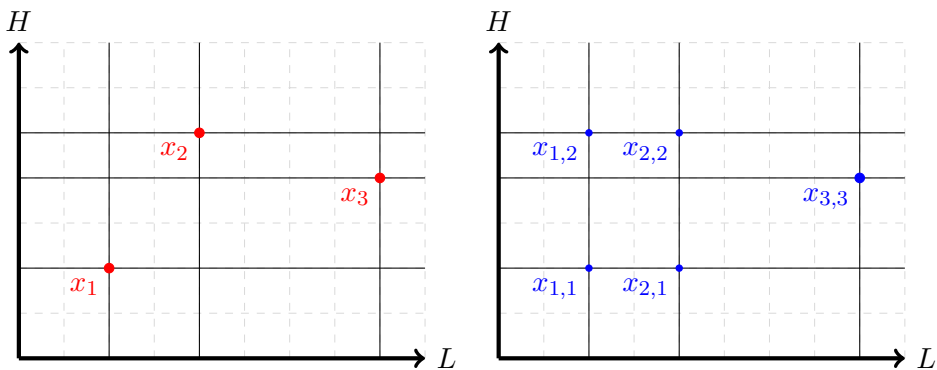
We now give a brief sketch of the rest of the proof. We will assume by contradiction that, as in Figure \ref{fig:M2}, there exist two distinct points $z$ and $z'$ in the support of $\Phi(\mu)$ that project onto $P_L(x_1)$; if there are no such two points, a short argument then shows that $\Phi(\mu)=\mu\in \mathcal{F}$. We then slightly perturb the measure $\mu$ to create $\mu'$, by moving a small weight from $x_1$ to some point $x_0$ close to $x_1$. Similarly, we perturb $\nu=\Phi(\mu)$ twice, by shifting a small weight from either $z$ or $z'$, to a point close to either $z$ or $z'$, creating two new and distinct measures $\nu_1'$ and $\nu_2'$. These new measures are represented in Figure \ref{fig:M1}. 

The constructed measure $\mu'$ has an important property. Indeed, if we call the \textit{fingerprint} $\mathcal{R}$ of a measure $\xi$ its projection onto $L$ and $H$, $\mathcal{R}(\xi)=(P_{L \#}\xi, P_{H \#} \xi)$, then there exists no other measure $\xi$ that $a)$ has the same fingerprint as $\mu'$ and $b)$ lies at the same distance from $\mu$ as $\mu'$. In other words, if 
$$\mathcal{R}(\xi)= \mathcal{\mu'} \text{ and }d_{\mathcal{W}_p}(\mu, \xi)=d_{\mathcal{W}_p}(\mu, \mu'),$$
then we have that $\xi=\mu'$. To get the desired contradiction, we will remark that both $\nu_1'$ and $\nu_2'$ have by their construction the same fingerprint as $\mu'$, and are both at the same distance from $\nu$ as $\mu'$ is from $\mu$, 
$$
d_{\mathcal{W}_p}(\nu, \nu_1')=d_{\mathcal{W}_p}(\nu, \nu_2')=d_{\mathcal{W}_p}(\mu, \mu').
$$ 
Also, by Lemma \ref{lem: projection commutates}, the fingerprint of a measure is preserved under isometries, i.e. $\mathcal{R}(\xi)=\mathcal{R}(\Phi(\xi))$. Thus, the images of $\nu_1'$ and $\nu_2'$ under the inverse isometry $\Phi^{-1}$ have the same fingerprint as $\mu'$, and (since $\Phi^{-1}$ is also an isometry) lie at the same distance from $\Phi^{-1}(\nu)=\mu$ as $\mu'$, or 
\begin{align*}
\mathcal{R}(\Phi^{-1}(\nu_1'))&=\mathcal{R}(\Phi^{-1}(\nu_2'))=\mathcal{R}(\Phi^{-1}(\mu')), \\
 d_{\mathcal{W}_p}(\mu, \Phi^{-1}(\nu_1'))&=d_{\mathcal{W}_p}(\mu, \Phi^{-1}(\nu_2'))=d_{\mathcal{W}_p}(\mu, \mu'). 
\end{align*} 
But then we necessarily have $\Phi^{-1}(\nu_1')=\Phi^{-1}(\nu_2')=\mu'$, and $\nu_1'=\nu_2'$, giving the desired contradiction. 
Having given this brief sketch, we continue with the proof.

Assume for contradiction that $\Phi(\mu) \notin \mathcal{F}$. Then there exists $k_0 \leq M$ and $k_1 \neq k_2 \leq M$ such that both $b_{k_0, k_1}$ and $ b_{k_0, k_2}$ are non-zero.
Indeed, assume that for any $k \leq M$, there exists a unique $k' \leq M$ such that $b_{k, k'} \neq 0$. Since from equation \eqref{eq: proj_phi_mu} we have that 
$$
\sum_{k'=1}^Mb_{k,k'}=a_k,
$$
this shows that $b_{k,k'}= a_k$, and $\Phi(\mu)$ is supported on $M$ points. Again from \eqref{eq: proj_phi_mu} we can then deduce that actually $\Phi(\mu)=\mu \in \mathcal{F}$. 
Thus we have that there exists $k_0 \leq M$ such that, for some $k_1 \neq k_2 \leq M$, we have $b_{k_0, k_1}$ and $b_{k_0, k_2}$ are non-zero. Without loss of generality we can assume that $k_0=1$.  

We set $h$ to be the shortest distance between any two points $x_{k, k'}$, i.e. $$h=\min_{(k,k') \neq (\tilde{k}, \tilde{k'})} d_N(x_{k,k'}, x_{\tilde{k}, \tilde{k'}}).$$
Since the set $\{ x_{k, k'} |k,k' \leq M \}$ is discrete and finite, we have that $h>0$.

\noindent We build the point $x_0 =x_1 + \frac{h_0}{d_N(x_1, P_{L}(x_1))}(x_1-P_{L}(x_1))$ where $h_0\in \R$ is such that $0 < h_0<\frac{h}{2}$. 
We then consider the points $x_{0, k}$ defined such that $P_H(x_{0, k})=P_H(x_0)$ and $P_{L}(x_{0, k})=P_{L}(x_k)$. Explicitly, these points are given by $x_{0, k'}=x_{k'} + P_H(x_0)-P_H(x_{k'})$. In particular $x_{0, 1}=x_0$. Also, we notice that $d_N(x_{0, k'}, x_{1, k'})=h_0$, this follows easily from Lemma \ref{lem: Chebychev}. 

Consider the weight $a_0=\frac{1}{2}\min (b_{1, k_1}, b_{1,k_2})$. By our assumption $b_{1, k_1}, b_{1,k_2}>0$, and we thus have that $a_1 > a_0>0$.  We build the following measures:
\begin{align*}
\mu' &= a_0 \delta_{x_{0}} + (a_1-a_0)\delta_{x_1} + \sum_{k=2}^Ma_k \delta_{x_k}, \\
\nu_1'&=a_0 \delta_{x_{0, k_1}} +(b_{1, k_1}-a_0)\delta_{x_{1, k_1}}+ \sum_{k=1, k\neq k_1}b_{1, k}\delta_{x_{1, k}} + \sum_{k=2}^M\sum_{k'=1}^Mb_{k, k'}\delta_{x_{k,k'}}, \\
\nu_2'&=a_0 \delta_{x_{0, k_2}} + (b_{1, k_2}-a_0)\delta_{x_{1, k_2}} + \sum_{k=1, k\neq k_2}b_{1, k}\delta_{x_{1, k}} + \sum_{k=2}^M\sum_{k'=1}^Mb_{k, k'}\delta_{x_{k,k'}}.
\end{align*}

The measure $\mu'$ (resp. $\nu_1'$ and $\nu_2'$) is obtained by "shifting" a small portion of the weight of $\mu$ (resp. $\Phi(\mu)$) from 
$x_1$ to $x_0$ (resp. from $x_{1, k_{1/2}}$ to $x_{0, k_{1/2}}$). Thus the projections of $\mu'$ onto $L$ and $H$ are 
$$
P_{L\#}(\mu')=a_0 \delta_{P_{L}(x_0)} + (a_1-a_0)\delta_{P_{L}(x_1)} +\sum_{k=2}^Ma_k\delta_{P_{L}(x_k)}
$$
and
$$
P_{H\#}(\mu')=P_{H\#}(\mu)=\sum_{k=1}^M a_k \delta_{P_{H}(x_k)};
$$
the measures $\nu_1', \nu_2'$ have the same respective projections. 
\begin{figure}
\centering
\begin{tikzpicture}[scale=0.6]
\draw[help lines, color=gray!30, dashed] (0,0) grid (9,7);
\draw[->,ultra thick] (0,0)--(9,0) node[right]{$L$};
\draw[->,ultra thick] (0,0)--(0,7) node[above]{$H$};

\draw[black, thin] (0,5) -- (9,5);
\draw[black, thin] (0,4) -- (9,4);
\draw[black, thin] (0,2) -- (9,2);

\draw[black, thin] (2,0) -- (2,7);
\draw[black, thin] (2.5,0) -- (2.5,7);
\draw[black, thin] (4,0) -- (4,7);
\draw[black, thin] (8,0) -- (8,7);

\filldraw[red] (2,2) circle (2pt) node[anchor=north east]{$x_1$};
\filldraw[red] (2.5,2) circle (1pt) node[anchor=north west]{$x_0$};
\filldraw[red] (4,5) circle (3pt) node[anchor=north east]{$x_2$};
\filldraw[red] (8,4) circle (3pt) node[anchor=north east]{$x_3$};

\end{tikzpicture}

\begin{tikzpicture}[scale=0.6]
\draw[help lines, color=gray!30, dashed] (0,0) grid (9,7);
\draw[->,ultra thick] (0,0)--(9,0) node[right]{$L$};
\draw[->,ultra thick] (0,0)--(0,7) node[above]{$H$};

\draw[black, thin] (0,5) -- (9,5);
\draw[black, thin] (0,4) -- (9,4);
\draw[black, thin] (0,2) -- (9,2);

\draw[black, thin] (2.5, 0) -- (2.5, 7);
\draw[black, thin] (2,0) -- (2,7);
\draw[black, thin] (4,0) -- (4,7);
\draw[black, thin] (8,0) -- (8,7);

\filldraw[blue] (2,5) circle (2pt) node[anchor=north east]{$x_{1, 2}$};
\filldraw[blue] (2,2) circle (2pt) node[anchor=north east]{$x_{1, 1}$};
\filldraw[blue] (2.5,2) circle (1pt) node[anchor=south west]{$x_{0, 1}$};
\filldraw[blue] (4,2) circle (2pt) node[anchor=south west]{$x_{2, 1}$};
\filldraw[blue] (4,5) circle (2pt) node[anchor=south west]{$x_{2, 2}$};
\filldraw[blue] (8,4) circle (3pt) node[anchor=north east]{$x_{3, 3}$};
\end{tikzpicture}
\begin{tikzpicture}[scale=0.6]
\draw[help lines, color=gray!30, dashed] (0,0) grid (9,7);
\draw[->,ultra thick] (0,0)--(9,0) node[right]{$L$};
\draw[->,ultra thick] (0,0)--(0,7) node[above]{$H$};

\draw[black, thin] (0,5) -- (9,5);
\draw[black, thin] (0,4) -- (9,4);
\draw[black, thin] (0,2) -- (9,2);

\draw[black, thin] (2.5, 0) -- (2.5, 7);
\draw[black, thin] (2,0) -- (2,7);
\draw[black, thin] (4,0) -- (4,7);
\draw[black, thin] (8,0) -- (8,7);

\filldraw[blue] (2,5) circle (2pt) node[anchor=north east]{$x_{1, 2}$};
\filldraw[blue] (2.5,5) circle (1pt) node[anchor=south west]{$x_{0, 2}$};
\filldraw[blue] (2,2) circle (2pt) node[anchor=north east]{$x_{1, 1}$};
\filldraw[blue] (4,2) circle (2pt) node[anchor=south west]{$x_{2, 1}$};
\filldraw[blue] (4,5) circle (2pt) node[anchor=south west]{$x_{2, 2}$};
\filldraw[blue] (8,4) circle (3pt) node[anchor=north east]{$x_{3, 3}$};
\end{tikzpicture}
\caption{The constructions $\mu'$ (up), $\nu_1'$ (down left) and $\nu_2'$ (down right).}
\label{fig:M1}
\end{figure}
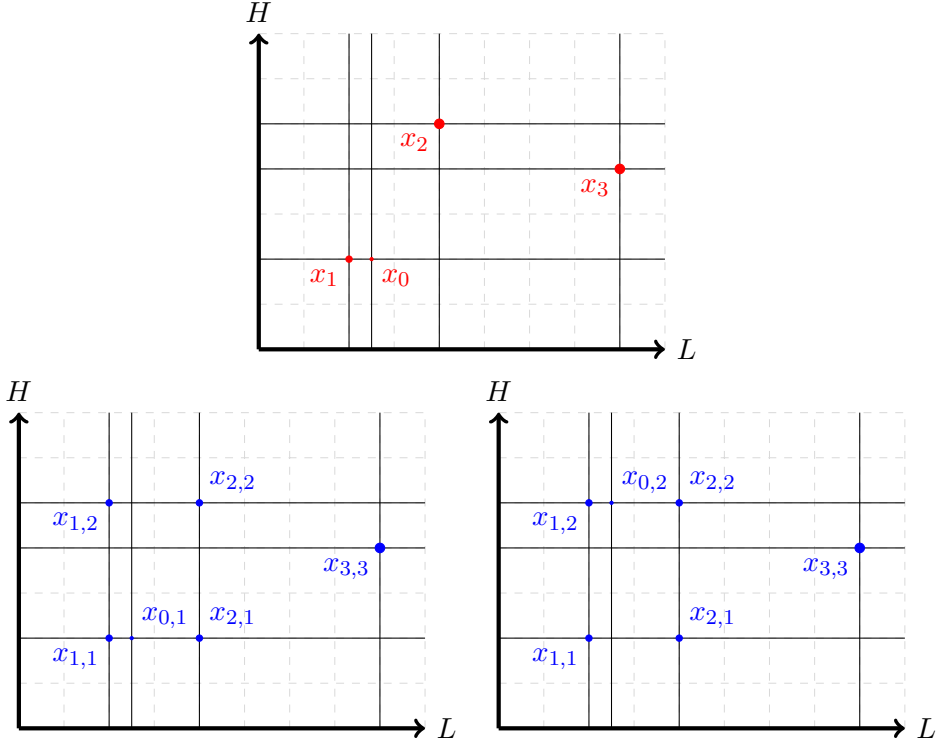

Then we can show that
$$
d_{\mathcal{W}_p}(\mu, \mu')=d_{\mathcal{W}_p}(\Phi(\mu), \nu_1')=d_{\mathcal{W}_p}(\Phi(\mu), \nu_2')=a_0^{\frac{1}{p}}h_0.
$$
For this, consider an optimal transport plan $\pi_0\in \Pi(\mu, \mu')$. Then, as $\mu$ and $\mu'$ are a combination of Dirac masses, we have
\begin{align*}
d^p_{\mathcal{W}_p}(\mu, \mu')\geq \sum_{k=0}^M \pi_0(x_k, x_1)d_N^p(x_k, x_1) \geq  \left( \sum_{k=0}^M \pi_0(x_k, x_1) \right) d_N^p(x_0, x_1)= a_0h_0^p,
\end{align*}
since for any $1 \leq k \leq M$, $d_1^p(x_k, x_1) \geq h >h_0$.
On the other hand, since $\mu'$ is obtained by shifting a weight $a_0$ from $x_1$ to $x_0$, we have
$$
d^p_{\mathcal{W}_p}(\mu, \mu') \leq a_0 d_N^p(x_0, x_1)=a_0 h_0^p.
$$
Thus we have $d_{\mathcal{W}_p}(\mu, \mu')=a_0^{\frac{1}{p}}h_0$. The proof of the other cases is similar. 

We also show that, for any measure $\xi \in \mathcal{W}_p(\Rn, d_N)$, if we have $d_{\mathcal{W}_p}(\mu, \xi)= a^{\frac{1}{p}}h_0$ and $P_{H\#}(\xi)=P_{H\#}(\mu')$, $P_{L\#}(\xi)=P_{L\#}(\mu')$ and $\text{supp}(\xi) \in \bigcup_{i=0}^M (H+x_i)$, then $\xi=\mu'$.  

To see this, consider a measure $ \xi \in \mathcal{W}_p(\Rn, d_N)$ such that we get $P_{H\#}(\xi)=P_{H\#}(\mu')$, $P_{L\#}(\xi)=P_{L\#}(\mu')$ and $\text{supp}(\xi)\in  \bigcup_{i=0}^M (H+x_i)$. By the same argument as above we can write $\xi$ as 
$$
\xi = \sum_{k=0}^{M}\sum_{k'=1}^M \tilde{b}_{ij}\delta_{x_{k,k'}} 
$$  
with 
$$
\sum_{k'=1}^M \tilde{b}_{0k'}=a_0, \quad \sum_{k'=1}^M\tilde{b}_{1k'}=a_1-a_0, \quad \sum_{k'=1}^M b_{kk'}=a_k \text{ for } 2 \leq k \leq M, \quad \sum_{k=0}^M\tilde{b}_{kk'}=a_{k'}.
$$
Similar to above we get that, for $\pi_0$ an optimal transport plan between $\mu$ and $\xi$, since $\pi_0(x_{1}, x_1) \leq \tilde{b}_{11} \leq a_1-a_0$, we have that $\sum_{k=0}^M\sum_{k'=1}^M\pi_0(x_{k,k'}, x_1)-\pi_0(x_1, x_1)\geq a_0$, and thus
\begin{align*}
d^p_{\mathcal{W}_p}(\mu, \xi)\geq \sum_{k=0}^M\sum_{k'=1}^M \pi_0(x_1, x_{k,k'})d_N^p(x_1, x_{k,k'}) \geq  a_0 d_N^p(x_1, x_0)= a_0h_0^p.
\end{align*} 
To have equality, we need that $\pi_0(x_{k,k'}, x_{k''})=0$ unless $k'' = 1$ or $k=k'=k''$. Since $d_N(x_0, x_1)<d_N(x_{k,k'}, x_1)$ unless $(k,k')=(0,1)$, we also need that $\pi_0(x_{k,k'}, x_{1})=0$ for $(k,k')\neq (0,1)$ or $k=k'=1$. Thus the support of $\pi_0$ is exactly $\{ (x_k, x_k) \} \cup \{(x_0, x_1)\}$. From the conditions on the weights we can thus conclude that $\xi=\mu$.

To get the desired contradiction, we notice that, while $\nu_1' \neq \nu_2'$, 
$$d_{\mathcal{W}_p}(\Phi(\mu), \nu_1')=d_{\mathcal{W}_p}(\Phi(\mu), \nu_2')=a_0^{\frac{1}{p}}h_0.$$
 Since $\Phi^{-1}$ is an isometry, we thus have that $d_{\mathcal{W}_p}(\mu,\Phi^{-1}( \nu_1'))=d_{\mathcal{W}_p}(\mu,\Phi^{-1}( \nu_2'))=a_0^{\frac{1}{p}}h_0$, with $\Phi^{-1}( \nu_1') \neq \Phi^{-1}( \nu_2')$. By Lemma \ref{lem: projection commutates}, we have that $P_{H \#}(\Phi^{-1}( \nu_1'))=P_{H \#}( \nu_2')=P_{H \#}(\mu')$ and $P_{L \#}(\Phi^{-1}( \nu_1'))=P_{L \#}( \nu_2')=P_{L\#}(\mu')$. Lemma \ref{lem: restrict_image} shows that 
 $$\text{supp}(\nu_1'), \text{ supp}(\nu_2') \in \bigcup_{i=0}^M (H+x_i).$$ But then, by the previous remark,  $\Phi^{-1}( \nu_1')=\mu'=\Phi^{-1}( \nu_2')$, giving the contradiction. Therefore we have that indeed $\Phi(\mu) \in \mathcal{F}$. 

To finish the proof of the proposition, we combine the three properties of $\mathcal{F}$ to show that, if $\mu \in \Wp(\Rn, d_N)$, then $\Phi(\mu)=\mu$. By Lemma \ref{lem: projection commutates}, we have that, 
$$
P_{H \#}(\Phi(\mu))=\Phi(P_{H \#}(\mu))=P_{H \#}(\mu),
$$
since $\Phi$ is assumed to be invariant on measures supported on $H$. By the injectivity of the projection map from the set $\mathcal{F}$, we have that $\Phi(\mu)=\mu$ whenever $\mu\in \mathcal{F}$. By the density of $\mathcal{F}$ we can extend the same result to any $\mu  \in \Wp(\Rn, d_N)$.
\end{proof}

\section{Isometric rigidity using $C^2$-differentiability}
\label{sec:potential}

In this section we prove Theorem \ref{thm1}, showing that the $C^2$-smoothness and the strict convexity of a norm is a sufficient condition for the rigidity of the Wasserstein space $\mathcal{W}_p(\Rn, d_N)$ when $p \neq 2$. We consider $p \geq 1,$ $p \neq 2$ fixed. We will work on the normed space $(\Rn,d_N)$, where the norm $N: \Rn \to \R_{+}$ is $C^2$-smooth on $\Rn \setminus \{0\}$, and $N$ is strictly convex. 
By the Corollaries \ref{corr:p>1 Dirac fix} and \ref{corr:p=1 Dirac fix}, for an isometry $\Phi:\Wp(\Rn,d_N) \to \Wp(\Rn,d_N)$, we can assume that $\Phi(\delta_x)=\delta_x$ for all $x\in\Rn$. 

Our goal is to prove that $\Phi(\mu)=\mu$ for any measure $\mu \in \mathcal{W}_p(\R^n, d_N)$ using the so-called \emph{potential functions} $\potmu$. For a measure $\mu$, we define $\potmu$ as follows:
\begin{equation}\label{eq:pot}
    \potmu\colon \Rn\to \R_+, \quad x\mapsto \dwp^p(\mu,\delta_x) = \int_{\Rn} N^p(x-y)~\dd\mu(y).
\end{equation}
Observe that $\potmu(x)=\potphimu(x)$ for all $x\in\Rn$. Indeed,
$$\potmu(x)=\dwp^p(\mu,\delta_x)=\dwp^p(\Phi(\mu),\Phi(\delta_x))=\dwp^p(\Phi(\mu),\delta_x)=\potphimu(x).$$
The question is whether $\potmu\equiv \mathcal{T}_{\nu}^{(p)}$ implies $\mu=\nu$? 
The answer in the general normed setting is no. 
To see an instructive example, consider the plane $\R^2$ equipped with the maximum norm $N_{\infty}(x_1,x_2)=\max\{|x_1|,|x_2|\}$. If we now take the measures 
\begin{align*}
\mu=\frac{1}{2}(\delta_{(0,1)} + \delta_{(0, -1)})\text{ and }\nu=\frac{1}{4}(\delta_{(0,1)} + \delta_{(0,-1)} + \delta_{(1, 0)} + \delta_{(-1,0)}),
\end{align*}
then the potential functions are given by 
$$
\mathcal{T}^{(1)}_{\mu}(x_1,x_2)=\frac{1}{2}(\max\{|x_1|, |x_2| + 1\} + \max\{|x_1|+1, |x_2|\}-1)=\mathcal{T}^{(1)}_{\nu}(x_1, x_2). 
$$
Thus, in a general normed setting, we cannot conclude from $\mathcal{T}^{(p)}_{\mu}(x)=\mathcal{T}^{(p)}_{\nu}(x)$ that $\mu=\nu$. However, we will now show that, if the norm $N$ is at least $C^2$-smooth, then $\mathcal{T}^{(p)}_{\mu}(x)=\mathcal{T}^{(p)}_{\nu}(x)$ implies $\mu=\nu$. 

The proof of rigidity is split into two parts. We start by giving a direct proof in the case $p \in [1,2)$. The second part will look at $p \in (2, \infty)$, where we show that measures supported on certain subspace "remain" on these subspaces after an isometry, and we finish with an induction argument using the proposition of the previous section. 

\subsection{First case: $p<2$}

We fix $p \in [1, 2)$, and we want to prove the following equality:
$$\lim_{\substack{h \to 0 \\ h \neq 0}}\frac{\mathcal{T}^{(p)}_{\mu}(x+h)-2\mathcal{T}^{(p)}_{\mu}(x)+\mathcal{T}^{(p)}_{\mu}(x-h)}{2 N^p(h)} = \mu(\{x\}).$$
As a first step, we show that
\begin{equation}
\label{eq: char0}
\lim_{\substack{h \to 0 \\ h \neq 0}}\frac{N^p(x+h)-2N^p(x)+N^p(x-h)}{2 N^p(h)}=\mathds{1}_{\{0\}}(x).
\end{equation}
 If $x=0$, the limit is trivially equal to $1$. For $x \neq 0$, the function $y\to N^p(y)$ is twice differentiable at $x$, and Taylor's expansion gives
$N^p(x \pm h)=N^p(x)+\langle\nabla N^p(x),\pm h\rangle+O(\|h\|^2)$
where $\|\cdot\|$ denotes the Euclidean norm. From here, we see that $N^p(x+h)-2N^p(x)+N^p(x-h)=O(\|h\|^2)$, and thus
$$\lim_{\substack{h \to 0 \\ h \neq 0}}\frac{N^p(x+h)-2N^p(x)+N^p(x-h)}{2 N^p(h)}=\lim_{\substack{h \to 0 \\ h \neq 0}}\frac{O(\|h\|^2)}{2 N^p(h)}=0,$$
because $N$ and $\|\cdot\|$ are equivalent norms (any two norms are bi-Lipschitz equivalent over a finite-dimensional vector space). Now integrating over $\Rn$ against $\mu$ we have
$$\mathcal{T}^{(p)}_{\mu}(x \pm h)=\int_{\Rn}N^p(x \pm h-y)~\dd\mu(y),$$
and thus
$$\lim_{\substack{h \to 0 \\ h \neq 0}}\frac{\mathcal{T}^{(p)}_{\mu}(x+h)-2\mathcal{T}^{(p)}_{\mu}(x)+\mathcal{T}^{(p)}_{\mu}(x-h)}{2 N^p(h)}$$
can be written as
\begin{equation}
\label{eq:tolebesgue}
\lim_{\substack{h \to 0 \\ h \neq 0}}\int_{\Rn}\frac{N^p(x+h-y)-2N^p(x-y)+N^p(x-h-y)}{2 N^p(h)}~\dd\mu(y).
\end{equation}

We want to show now that the function $G: \Rn \times \Rn \to \R$ given by  
\begin{equation*}
G(x,h)=
\begin{cases}
\frac{N^p(x+h)-2N^p(x) + N^p(x-h)}{2N^p(h)}& \text{ if }h \neq 0  \\
\mathds{1}_{\{0\}}(x) & \text{ if }h =0
\end{cases}
\end{equation*}
is bounded. To see this, we first notice that $G(\lambda x, \lambda h)=G(x, h)$ for any $\lambda\in \R$, thus we only need to prove that $G$ is bounded on the set $E \times E$, with $E=\{y \in \Rn \text{ such that }N(y) \leq 1 \}$. When $x=0$, then $G(0,h)=1$ is trivially bounded. When $x \neq 0$, we can use Taylor's expansion to write 
$$
G(x, h)=\frac{O(\|h\|^2)}{2N^p(h)}.
$$
Since on $E \times E$ we have $N(h)\leq 1$, and since $N$ and $\|\cdot \|$ are equivalent norms, $G$ is bounded on $E \times E$, and thus is bounded on $\Rn \times \Rn$.

Since $G$ is bounded, we can use the Lebesgue-dominated convergence theorem to interchange the limit and the integral of \eqref{eq:tolebesgue}, and thus we get $$\lim_{\substack{h \to 0 \\ h \neq 0}}\frac{\mathcal{T}^{(p)}_{\mu}(x+h)-2\mathcal{T}^{(p)}_{\mu}(x)+\mathcal{T}^{(p)}_{\mu}(x-h)}{2 N^p(h)}=\int_{\Rn}\mathds{1}_{\{0\}}(x-y)~\dd\mu(y)=\mu(\{x\}).$$

To prove rigidity, consider a finitely supported measure $\mu$. Then, since $\mathcal{T}^{(p)}_{\mu}=\mathcal{T}^{(p)}_{\Phi(\mu)}$, we have that, for any $x_i \in \supp(\mu)$, 
\begin{align*}
\mu(\{x_i\})=&\lim_{\substack{h \to 0 \\ h \neq 0}}\frac{\mathcal{T}^{(p)}_{\mu}(x+h)-2\mathcal{T}^{(p)}_{\mu}(x)+\mathcal{T}^{(p)}_{\mu}(x-h)}{2 N(h)} \\
=&\lim_{\substack{h \to 0 \\ h \neq 0}}\frac{\mathcal{T}^{(p)}_{\Phi(\mu)}(x+h)-2\mathcal{T}^{(p)}_{\Phi(\mu)}(x)+\mathcal{T}^{(p)}_{\Phi(\mu)}(x-h)}{2 N(h)}=\Phi(\mu)(\{x_i\}),
\end{align*} and thus $\Phi(\mu)=\mu$. Since finitely supported measures are dense in $\mathcal{W}_p(\Rn, d_N)$, we have that $\Phi(\mu)=\mu$ for any $\mu \in \mathcal{W}_p(\Rn, d_N)$, proving Theorem \ref{thm1} for the case $p\in [1, 2)$. 

\subsection{Second case: $p>2$}

While we could, similar to the method in \cite{GTV2}, adapt the previous idea for higher $p$, this approach presents two problems. First, it would require a Taylor expansion of the order of $\lceil p \rceil$, while the norms we consider are only $C^2$-smooth. Second, this method does not work when $p$ is an even integer.

Instead, we show in the next proposition that, if we restrict the isometry to measures supported on a certain subspace, the image will be measures supported on the same subspace. The rigidity of the Wasserstein space then follows by an induction on the dimension $n$, using the result of the previous section.  

\begin{proposition}
\label{prop: Hessian_restrict}
If $p>2$ and $N: \Rn \to \R_{+}$ is a $C^{2}$-smooth norm, then there exists a proper linear subspace $L \subset \Rn$ such that for any $x_0 \in \Rn$, if $\mu$ is a measure supported on $x_0 + L$, then for any isometry $\Phi$ of the Wasserstein space $\mathcal{W}_p(\Rn, d_N)$ fixing Dirac masses, $\Phi(\mu)$ will also be supported on $x_0 +L$.
\end{proposition}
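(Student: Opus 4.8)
The plan is to extract the subspace $L$ from the second-order behaviour of $N^p$, just as the $p<2$ argument exploited the second-order Taylor expansion of $N^p$, but now accounting for the fact that when $p>2$ the Hessian of $N^p$ is genuinely degenerate in certain directions. Concretely, fix a point $e$ on the unit sphere of $N$ and consider the function $y \mapsto N^p(y)$, which is $C^2$ on $\Rn \setminus \{0\}$; I would analyse the second difference quotient $\frac{N^p(e+h)-2N^p(e)+N^p(e-h)}{2N^p(h)}$ as $h\to 0$. Because $N$ is $C^2$ away from $0$, the numerator is $O(\|h\|^2)$, so the ratio vanishes unless $N^p(h)$ itself is of order $\|h\|^2$ — and $N^p(h)\sim \|h\|^p$ with $p>2$ means $N^p(h)=o(\|h\|^2)$ along every direction. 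Thus, in contrast to the $p<2$ case, the limit of this ratio is generically infinite rather than zero; it stays finite (in fact tends to $0$) precisely along those directions $h$ for which the numerator is $o(N^p(h))=o(\|h\|^p)$, i.e. directions in which the second-order part of the Taylor expansion of $N^p$ at $e$ degenerates. I would define $L$ (after translating $e$ to the origin, or working intrinsically) as the kernel of the Hessian quadratic form of $N^p$ at a suitable point, or more robustly as the set of $v$ for which $t\mapsto N^p(e+tv)$ has vanishing second derivative at $t=0$; one must check this is a genuine \emph{linear} subspace and a \emph{proper} one — properness because $N^p$ is strictly convex in the radial direction so the Hessian cannot vanish identically, and linearity because the Hessian of a homogeneous function at a point is a fixed quadratic form whose kernel is a subspace.

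Having fixed $L$, the mechanism to constrain $\Phi$ is the potential-function identity $\potmu(x)=\potphimu(x)$ for all $x$, which holds because $\Phi$ fixes Dirac masses. Let $\mu$ be supported on $x_0+L$ and write $\nu=\Phi(\mu)$. Then for every $x\in\Rn$ and every $v$ in the degenerate directions, the second difference quotient
$$
\lim_{t\to 0}\frac{\potmu(x+tv)-2\potmu(x)+\potmu(x-tv)}{2 N^p(tv)}
=\lim_{t\to 0}\int_{\Rn}\frac{N^p(x-y+tv)-2N^p(x-y)+N^p(x-y-tv)}{2N^p(tv)}\,\dd\mu(y)
$$
can be evaluated by dominated convergence (the integrand is bounded by the homogeneity-plus-Taylor argument used for the function $G$ in the $p<2$ case), and the pointwise limit of the integrand is some explicit expression $\Psi_v(x-y)$ depending on whether $x-y\in L^{\perp\!\!\!\perp}$-type sets — the key being that this limit \emph{detects} when $x-y$ lies in the bad directions. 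The upshot I expect: by choosing $x$ outside $x_0+L$ and $v$ transverse to $L$, the quantity above forces $\nu$ to charge only points $z$ with $z-x_0$ in a set where the degeneracy is compatible with all of $\mu$'s support; combined with the fact (Corollary-type consequence of fixing Diracs) that $\supp\nu$ cannot escape to infinity and that $\potmu$ already pins down $\nu$'s barycentric/projective data, one concludes $\supp(\nu)\subseteq x_0+L$.

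The main obstacle, and the step I would spend the most care on, is the middle one: passing from "the second difference quotient of $\potmu$ along direction $v$ equals the corresponding quotient of $\potnu$" to the geometric conclusion "$\supp(\nu)\subseteq x_0+L$." This requires (i) identifying the pointwise limit $\Psi_v(w)=\lim_t \tfrac{N^p(w+tv)-2N^p(w)+N^p(w-tv)}{2N^p(tv)}$ exactly — it will be $0$ for generic $w$, $+\infty$ (hence must be excluded as a case, i.e. the limit fails to exist finitely) when $v$ is transverse to the Hessian kernel at $w$, and something finite and nonzero only in a thin exceptional set — and (ii) arguing that the equality of these limits for \emph{all} $x$ and all admissible $v$ is rigid enough. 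A clean way is: pick $v\notin L$; then for $x$ with $x-x_0\notin L$ one shows $\Psi_v(x-y)$ is well-behaved for all $y\in\supp\mu$ (since $x-y\notin L$ either, using that $L$ is a subspace and $x_0+L$ an affine subspace), whereas if $\nu$ charged a point $z$ with $z-x_0\notin L$ one could choose $x$ near $z$ making the $\potnu$-side quotient blow up while the $\potmu$-side stays bounded — contradiction. I would need to verify that "transverse to $L$" really is the same as "the Hessian of $N^p$ at $x-y$ has $v$ outside its kernel," which is where the precise definition of $L$ via the Hessian, and the $C^2$-smoothness and strict convexity of $N$, are essential; degeneracies of $N^p$ that are not uniform across base points would break the argument, so a key sub-lemma is that the degenerate-direction set is in fact independent of the base point (equal to $L$ at every $x\neq 0$), which should follow from homogeneity of $N^p$ together with its restriction to any ray being $t\mapsto t^p N^p(\text{unit vector})$.
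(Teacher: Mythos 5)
Your construction of $L$ does not work, and the gap is in the step you yourself flag as needing care. You propose to take $L$ to be the kernel of the quadratic form $v\mapsto v^{\top}(\mathrm{Hess}\,N^p)(e)\,v$ at a single base point $e$, and you assert as a ``key sub-lemma'' that this degenerate-direction set is independent of the base point, deducing this from homogeneity. That is false: homogeneity of $N^p$ only relates $(\mathrm{Hess}\,N^p)(tx)$ to $(\mathrm{Hess}\,N^p)(x)$ along a single ray and says nothing about how the Hessian varies across the unit sphere. For the $l_4$-norm one has $\mathrm{Hess}(N_4^4)(x)=12\,\mathrm{diag}(x_1^2,\dots,x_n^2)$, whose kernel is $\mathrm{span}(e_2,\dots,e_n)$ at $e_1$ but trivial at any point with all coordinates nonzero. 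Worse, for many strictly convex $C^2$ norms (the Euclidean norm being the basic example) $(\mathrm{Hess}\,N^p)(x)$ is positive definite at \emph{every} $x\neq 0$, so your $L$ would be $\{0\}$; the proposition then holds vacuously but is useless for the induction in Theorem \ref{thm1}, which needs a nontrivial $L$. Symmetric second difference quotients of $\potmu$ in direction $v$ can only ever see the quadratic form $v^{\top}H v$, so no choice of ``suitable point'' rescues this. (There is also a technical defect: with $N$ only $C^2$, the numerator of your quotient is $t^2 v^{\top}Hv+o(t^2)$, and when $v^{\top}Hv=0$ you cannot conclude that $o(t^2)/|t|^p$ converges for $2<p$; the limit $\Psi_v$ you rely on need not exist.)

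The paper's proof is structured around a different object. Proposition \ref{prop: non-constant derivative} fixes two (possibly distinct) directions $v_1,v_2$ and studies the scalar function $T(x)=v_2^{\top}(\mathrm{Hess}\,N^p)(x)v_1$ as a function of the \emph{base point} $x$; a compactness/sup argument over curves on the sphere produces $v_1,v_2$ for which $T$ is non-negative, non-constant, and vanishes on a nonempty cone $A$ --- and this works precisely in the positive-definite case where your kernel is trivial, because a bilinear evaluation $v_2^{\top}Hv_1$ with $v_2\neq v_1$ can vanish even when $H$ is definite. The quantity $\mathbb{T}_{\mu}(x)=\int T(x-y)\,d\mu(y)=v_2^{\top}\mathrm{Hess}(\potmu)(x)v_1$ is then the isometry invariant (note it constrains the \emph{displacements} $x-y$, not directions at a fixed point, which is what the statement about $\supp(\Phi(\mu))\subset x_0+L$ actually requires). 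Finally, since the zero set $A_0$ is in general not a linear subspace (compare Example \ref{ex:l4 case}), one must take $L$ to be a \emph{maximal} linear subspace contained in $A_0$ and use maximality, as in Lemma \ref{lem: support_2}, to exclude support points outside $x_0+L$. All three of these ingredients --- the bilinear (non-quadratic) degeneracy, the shift from directions to base points, and the maximality argument --- are absent from your proposal, so the argument as sketched cannot be completed.
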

Since $N$ is a convex function, we have that the matrix $(\text{Hess}N^p)(x)$ is positive semi-definite for any $x \in \Rn \setminus \{0\}$. We start by showing the following proposition. 
\begin{proposition}
\label{prop: non-constant derivative}
For any $N: \Rn \to \R_+$ a $C^2$-smooth norm, there exists $v_1, v_2 \in S^{n-1}$ such that 
$$
\min_{x \in \Rn \setminus \{0\}}v_2^{\top} (\text{Hess} N^p)(x) v_1=0
$$ 
and the function $x \to v_2^{\top} (\text{Hess} N^p)(x)v_1$ is non-negative and non-constant on $\Rn \setminus \{0\}$.
\end{proposition}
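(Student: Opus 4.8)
The plan is to take $v_1=v_2$ equal to an arbitrary unit vector $v\in S^{n-1}$ and to study the pure second derivative $g(x):=v^{\top}(\mathrm{Hess}\,N^p)(x)\,v$ on $\Rn\setminus\{0\}$; all three claimed properties will follow from just two features of $f:=N^p$, namely its convexity and its $p$-homogeneity (strict convexity of $N$ is not needed here). For non-negativity: since $t\mapsto t^p$ is convex and nondecreasing on $[0,\infty)$ and $N$ is convex, $f$ is a convex function on $\Rn$, and it is $C^2$ on $\Rn\setminus\{0\}$ by the hypothesis on $N$; there the Hessian of a convex function is positive semidefinite, so $g\ge 0$ on $\Rn\setminus\{0\}$.

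For the value of the infimum: from $N(\lambda x)=|\lambda|N(x)$ we get $f(\lambda x)=|\lambda|^pf(x)$, and applying the chain rule to the linear map $x\mapsto\lambda x$ (whose second-order contribution vanishes) gives $\lambda^2(\mathrm{Hess}\,f)(\lambda x)=|\lambda|^p(\mathrm{Hess}\,f)(x)$, that is
\[
(\mathrm{Hess}\,f)(\lambda x)=|\lambda|^{p-2}(\mathrm{Hess}\,f)(x)\qquad(\lambda\ne 0,\ x\ne 0).
\]
Hence $g(\lambda x)=|\lambda|^{p-2}g(x)$, and since $p-2>0$ we obtain $g(\varepsilon v)=\varepsilon^{p-2}g(v)\to 0$ as $\varepsilon\downarrow0$; combined with $g\ge0$ this gives $\inf_{x\ne0}g(x)=0$. (Equivalently, since $p>2$ the matrix-valued map $\mathrm{Hess}\,N^p$ extends continuously by the zero matrix to the origin, so $0$ is exactly the value understood as the minimum in the statement.)

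For non-constancy: restricting $f$ to the ray $t\mapsto tv$ with $t>0$ gives $f(tv)=t^pf(v)$, so by the chain rule $g(tv)=\tfrac{d^2}{dt^2}\bigl(t^pf(v)\bigr)=p(p-1)\,t^{p-2}f(v)$. Since $v\ne0$ and $N$ is a norm, $f(v)=N(v)^p>0$, so $g(v)=p(p-1)f(v)>0$, whereas $g(2v)=2^{p-2}p(p-1)f(v)\ne g(v)$ because $p\ne2$. Thus $g$ is non-constant, which finishes the argument.

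\textbf{Main obstacle.} There is essentially none; the content is homogeneity bookkeeping. The only points requiring a little care are that $N^p$ is genuinely $C^2$ on $\Rn\setminus\{0\}$, the computation of the homogeneity exponent $p-2$ of $\mathrm{Hess}\,N^p$ --- this is where the hypothesis $p\ne2$ enters, being precisely what forces non-constancy of $g$ --- and reading ``$\min=0$'' as the infimum over $\Rn\setminus\{0\}$, which equals $0$ by homogeneity (equivalently, the value of the continuous extension of the Hessian at the origin).
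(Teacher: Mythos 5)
Your proof establishes non-negativity and non-constancy correctly, but it misses the actual content of the proposition: the zero must be \emph{attained} at some nonzero point. You read the displayed condition as an infimum and verify it by letting $x\to 0$ along a ray, using the $(p-2)$-homogeneity of the Hessian. However, the proposition is used immediately afterwards to define the set $A=\{x\in\Rn\setminus\{0\} : v_2^{\top}(\mathrm{Hess}\,N^p)(x)v_1=0\}$, and the subsequent construction (a maximal affine subspace $L$ inside $A_0$, hence a nontrivial subspace on which measures are preserved) requires $A$ to be non-empty: the paper explicitly picks $x_1\in A$ to span the line $L_1$. With your choice $v_1=v_2=v$ this can fail. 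For the Euclidean norm and $p>2$ one computes $v^{\top}(\mathrm{Hess}\,|x|^p)v=p|x|^{p-2}+p(p-2)|x|^{p-4}\langle x,v\rangle^2>0$ for every $x\neq 0$, so the infimum $0$ is approached only at the origin and $A=\emptyset$. Nothing in the hypotheses rules out that the diagonal quadratic form $x\mapsto v^{\top}(\mathrm{Hess}\,N^p)(x)v$ is strictly positive on $\Rn\setminus\{0\}$ for every $v$; that is precisely the hard case, and your argument proves a strictly weaker statement that does not serve the rest of the paper.

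The paper's proof is devoted entirely to this difficulty. If some $v_1^{\top}(\mathrm{Hess}\,N^p)(x_0)v_1$ vanishes at a point $x_0$ of the unit sphere, one concludes with $v_2=v_1$ essentially as you do. Otherwise one must allow $v_2\neq v_1$: the paper moves $v_2$ along curves $\gamma_i$ joining $-v_1$ to $v_1$, uses the sign change of $\gamma_i(t)^{\top}(\mathrm{Hess}\,N^p)(x)v_1$ together with a supremum over the zero set to find a parameter $t_i$ at which the function of $x$ is non-negative on the sphere and vanishes at some $x_i$, and then excludes the degenerate possibility that it is identically zero for every $i$ by an analysis of the mixed second partial derivatives that ends in a contradiction with positive-definiteness. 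Your claim that the content is ``homogeneity bookkeeping'' is therefore not accurate; the proposal would need to be supplemented by essentially the whole of that argument.
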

Here $S^{n-1}$ is the usual unit sphere with respect to the Euclidean metric, and we consider the unit sphere of the norm $S_N^{n-1}=\{x \in \Rn | N(x)=1 \}$.  

\begin{proof}

If there exists $v_1 \in S^{n-1}$ and $x_0 \in S_N^{n-1}$ such that $v_1^{\top} (\text{Hess}N^p)(x_0) v_1=0$, this proves our claim. Indeed, using that $N^p(tv)=t^pN^p(v)$ when $t>0$, we can easily calculate  that 
$v_1^{\top} (\text{Hess}N^p)(v_1) v_1=p(p-1)>0$ and, due to positive semi-definiteness, that $$v_1^{\top} (\text{Hess}N^p)(x_0) v_1 \geq 0$$ for all $x \in S^{n-1}_N$. Thus we assume that   
\begin{equation}
\label{assumption:1}
v_1^{\top} (\text{Hess}N^p)(x) v_1>0
\end{equation} for all $x \in S_N^{n-1}$ and all $v_1 \in S^{n-1}$. 

Then we have in particular that $(-v_1)^{\top} (\text{Hess}N^p)(x) v_1<0$. If $v^1, \dots, v^n$ is a linearly independent basis of $\Rn$ with $v^1=v_1$, consider for $2 \leq i \leq n$ the curves $\gamma_i: [0,1] \to S^{n-1}$ given by $\gamma_i(t)=\tilde{n}((-1+2t)v^1 + (1-|2t-1|)v^i)$, where $\tilde{n}$ is the normalization operator $\tilde{n}(x)=\frac{x}{\|x\|_E}$, and $\| \cdot \|_E$ is the usual Euclidean norm.

We define the map $H_i: [0, 1] \times S_N^{n-1} \to \R$ (which depends implicitly on the choice of $v_1$) as
$$
H_i(t, x)=\gamma_i(t)^{\top}(\text{Hess}N^p)(x) v_1,
$$
define for any $x \in S_N^{n-1}$ the set 
$A^i_x=\{ t \in [0, 1] | H_i(t,x)=0 \}$
and we take $t^i_x = \sup(A^i_x)$. Then $H_i(t, x)>0$ for any $t \in [0,1]$ with $t>t^i_x$. We also set $t_i=\sup\{t^i_x | x \in S_N^{n-1}\}$. Since $H_i$ is continuous on a compact set, the preimage $H^{-1}(\{0\})$ is closed and compact, and thus $t_i \in (0,1)$, and there exists $x_i \in S_N^{n-1}$ such that $t_i=t_{x_i}$. Then $H_i(t_i, x_i)=0$, and for any $x \in S_N^{n-1}$, $H_i(t_i, x)\geq 0$. 

If for some $v_1 \in S^{n-1}$ and for some $2 \leq i \leq n$ the function $H_i(t_i, \cdot)$ is not constant on $S_N^{n-1}$, choosing $v_2=\gamma_i(t_i)$ proves the lemma. Thus we assume by contradiction that this never holds, i.e. for any $v_1 \in S^{n-1}$ and any $2 \leq i \leq n$, $H_i(t_i, x)=0$ for any $x\in S_N^{n-1}$.

Since by assumption \eqref{assumption:1} we have $\text{Hess}N^p(x) v_1 \neq 0$, the equality $H_i(t_i, x)=0$ implies that, for any $x \in S_N^{n-1}$, $\text{Hess}N^p(x) v_1$ is a vector in the hyperplane orthogonal to $\gamma_i(t_i)$. As by construction the collection $\{ \gamma_i(t_i) \}_{i=2}^n$ is a set of $n-1$ linearly independent vectors, if $\text{Hess}N^p(x) v_1$ is orthogonal to every vector in the collection, there exists $w_{v_1} \in S^{n-1}$ (which is orthogonal to all $\gamma_i(t_i)$) such that $\text{Hess}N^p(x) v_1=\lambda_{v_1}(x)w_{v_1}$ for some function $\lambda_{v_1}: S_N^{n-1} \to \R$. 

Choosing $v_1=e_i$ a canonical vector, we thus have that $\partial_{ij}N^p(x)=\lambda_{e_i}(x) (w_{e_i})_j$; since assumption \eqref{assumption:1} guarantees that $\partial_i^2N^p(x)>0$ for all $x\in S_N^{n-1}$, $(w_{e_i})_i\neq 0$ and we get 
\begin{equation}
\partial_{ij}N^p(x)= \frac{(w_{e_i})_j}{(w_{e_i})_i}\partial_{i}^2N^p(x).
\end{equation}
Since $N^p$ is $p-$homogeneous, this still holds true for any $x \in \Rn \setminus \{0\}$. 
If $\partial_{ij}N^p(x)=0$ for all $i,j \leq n$, $i \neq j$, then using partial integration we show that we can write $N^p(x)=\sum_{i=1}^n a_i(x_i)$ for some functions $a_i: \R \to \R$. As $\partial_{ij}N^p$ is $(p-2)$-homogeneous, we see that (for example) $(\partial_1^2 a_1)(0)=0$ and $\partial_1^2 N^p(e_2)=0$ contradicting assumption \eqref{assumption:1}.

 Thus we take $i \neq  j$ such that $\partial_{ij}N^p(x)\neq 0$. Then 
\begin{equation}
\label{assumption2}
\frac{(w_{e_i})_j}{(w_{e_i})_i}\partial_{i}^2N^p(x)= \partial_{ij}N^p(x)= \frac{(w_{e_j})_i}{(w_{e_j})_j}\partial_{j}^2N^p(x).
\end{equation}
We set $c=\frac{(w_{e_i})_j}{(w_{e_i})_i}$ and $d=\frac{(w_{e_j})_i}{(w_{e_j})_j}$. Using $v_1=\tilde{n}(\frac{1}{\sqrt{c}}e_i -\frac{1}{\sqrt{d}}e_j)$ (where $\tilde{n}$ is the normalization operator), assumption \eqref{assumption:1} gives the condition that $cd>1$. 

We first assume that $c,d>0$ (and thus $\partial_{ij}N^p(x)>0$) and consider the vector $v_1(\vartheta)=\cos (\vartheta)e_i -  \sin(\vartheta) e_j$ for $\vartheta \in [0, 2 \pi)$. Then \eqref{assumption:1} implies that 
$$
\cos^2(\vartheta) \partial_{i}^2N^p(x)-2 \sin(\vartheta)\cos(\vartheta) \partial_{ij}N^p(x)+\sin^2(\vartheta) \partial_{j}^2N^p(x)>0.
$$  
Using \eqref{assumption2}, we rewrite this as 
$$
\left( \frac{1}{c}\cos^2(\vartheta) -2 \sin(\vartheta)\cos(\vartheta) +\frac{1}{d}\sin^2(\vartheta) \right) \partial_{ij}N^p(x)>0 \\
$$
which implies $$
d\cos^2(\vartheta) -2 cd\sin(\vartheta)\cos(\vartheta) +c\sin^2(\vartheta)>0.
$$
This is in turn equivalent to 
$$
\left(\sqrt{d}\cos(\vartheta) - \sqrt{c}\sin(\vartheta) \right)^2 - 2\left(cd - \sqrt{cd} \right) \sin(\vartheta)\cos(\vartheta) >0.
$$
By choosing $\vartheta \in (0, \frac{\pi}{2})$ such that $\tan (\vartheta)=\sqrt{\frac{d}{c}}$, the square of the previous equation vanishes, and since $\sin(\vartheta), \cos(\vartheta)>0$, we get the condition 
$$
cd - \sqrt{cd}<0,
$$
contradicting our assumption that $cd>1$. 

If $c,d<0$, we instead define  $v_1(\vartheta)=\cos (\vartheta)e_i + \sin(\vartheta) e_j$. By the same argument we then get 
$$
d\cos^2(\vartheta) +2 cd\sin(\vartheta)\cos(\vartheta) +c\sin^2(\vartheta)<0
$$ or 
$$
|d|\cos^2(\vartheta) -2 cd\sin(\vartheta)\cos(\vartheta) +|c|\sin^2(\vartheta)>0,
$$
which gives the same contradiction as in the previous case. 

\end{proof}
For $v_1,v_2$ given by Proposition \ref{prop: non-constant derivative}, we consider the set 
$$A= \{x \in \Rn \setminus \{0\} | v_2^{\top} (\text{Hess}N^p)(x)v_1=0 \},$$ and $A_0=A \cup \{0\}$. We also define the function $T: \Rn \setminus \{0\} \to \R$ as 
$$
T(x)=v_2^{\top}(\text{Hess}N^p)(x)v_1.
$$
Then, for $x\neq 0$, we have $T(x)=0$ if $x\in A$, and $T(x)>0$ if $x \notin A$. Since $\text{Hess}N^p$ is $(p-2)$-homogeneous, with $p>2$, we have that $T(0)=0$. For a general measure $\mu$, we define $\mathbb{T}_{\mu}: \Rn \to \R$ as 
$$
\mathbb{T}_{\mu}(x)=\int_{\Rn}T(x-y) d\mu(y).
$$ Using the Lebesgue convergence theorem, we can show that 
$$
\mathbb{T}_{\mu}(x)=v_2^{\top}\text{Hess}\left(\int_{\Rn}N^p(x-y)d\mu(y)\right) v_1=v_2^{\top}\text{Hess} \left( \mathcal{T}^{(p)}_{\mu}(x) \right) v_1,
$$ and thus $\mathbb{T}_{\mu}(x)=\mathbb{T}_{\Phi(\mu)}(x)$. 

We now have the following Lemma. 
\begin{lemma}
\label{lem: support}
Let $\mu \in \mathcal{W}_p(\Rn, d_N)$ be a measure. If there exists a point $x_0 \in \Rn$ such that $\mathbb{T}_{\mu}(x_0)=0$, then $\mu$ is supported on the set of points $\{x_0 + A_0\}.$ Furthermore, if $\mu$ is a measure supported on an affine subset $x_0 + L \subset x_0 + A_0$ for a proper linear subset $L \subset \Rn$, then $\mathbb{T}_{\mu}(x)=0$ for all $x \in x_0 + L$.
\end{lemma}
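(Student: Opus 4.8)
The plan is to derive both assertions directly from the non-negativity of $T$ established before the lemma, together with two elementary facts about its zero set $A_0$. First I would record the relevant properties of $T$: by construction $T(x)>0$ for $x\notin A_0$, $T(x)=0$ for $x\in A$, and $T(0)=0$ because $\text{Hess}\,N^p$ is $(p-2)$-homogeneous with $p>2$, so that $T\ge 0$ on all of $\Rn$; moreover, since $\text{Hess}\,N^p$ is continuous on $\Rn\setminus\{0\}$ and $(p-2)$-homogeneous and the Euclidean unit sphere is compact, one gets $|T(x)|\le C\,\|x\|^{p-2}\to 0$ as $x\to 0$, so $T$ is in fact continuous on all of $\Rn$ and $A_0=T^{-1}(\{0\})$ is closed; finally, $N$ being a norm, $N^p$ is even, hence $\text{Hess}\,N^p(-x)=\text{Hess}\,N^p(x)$, so $T(-x)=T(x)$ and $A_0=-A_0$.

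For the first statement I would argue that the hypothesis $\mathbb{T}_{\mu}(x_0)=\int_{\Rn}T(x_0-y)\,\dd\mu(y)=0$, combined with $T\ge 0$, forces the integrand to vanish $\mu$-almost everywhere, i.e.\ $x_0-y\in A_0$ for $\mu$-a.e.\ $y$. Using $A_0=-A_0$ this reads $y\in x_0+A_0$ for $\mu$-a.e.\ $y$, and since $x_0+A_0$ is closed we conclude $\supp(\mu)\subset x_0+A_0$.

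For the second statement, suppose $\supp(\mu)\subset x_0+L\subset x_0+A_0$ with $L$ a proper linear subspace. Then $L\subset A_0$, and as $T(0)=0$ this gives $T\equiv 0$ on $L$. Now fix any $x\in x_0+L$; for $\mu$-a.e.\ $y$ we have $y\in x_0+L$, hence $x-y\in L$ and $T(x-y)=0$, so $\mathbb{T}_{\mu}(x)=\int_{\Rn}T(x-y)\,\dd\mu(y)=0$, as claimed.

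I do not expect a serious obstacle here: once Proposition~\ref{prop: non-constant derivative} has furnished a non-negative, non-constant Hessian component $T$, this lemma is essentially bookkeeping. The only points requiring a little care are the continuity of $T$ at the origin --- which is precisely what lets one upgrade an almost-everywhere statement to a statement about $\supp(\mu)$ --- and the central symmetry of $A_0$; both follow quickly from homogeneity and the evenness of $N^p$. The genuinely substantial work of the section lies in Proposition~\ref{prop: non-constant derivative} and in the subsequent deduction of Proposition~\ref{prop: Hessian_restrict} from this lemma, not in the lemma itself.
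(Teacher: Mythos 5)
Your proof is correct and follows essentially the same route as the paper: both assertions rest on the non-negativity of $T$, the closedness of $A_0$, and the vanishing of $T$ on $A_0$, with the second part argued identically. The only difference is cosmetic: for the first assertion the paper argues by contradiction, bounding $\mathbb{T}_{\mu}(x_0)$ from below by the minimum of $T(x_0-\cdot)$ over a small closed ball around a hypothetical support point outside $x_0+A_0$, whereas you invoke the standard fact that a non-negative integrand with zero integral vanishes $\mu$-almost everywhere and then pass to the support via closedness; you also make explicit the symmetry $A_0=-A_0$ (from the evenness of $N^p$) that the paper uses only tacitly when identifying $x_0-A_0$ with $x_0+A_0$.
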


\begin{proof}
We start by showing the first part of the statement. Assume that $\mathbb{T}_{\mu}(x_0)=0$, but $\mu$ is not supported on $x_0 + A_0$, i.e. there exists $x_0' \in \supp(\mu)$ such that $x_0' \notin x_0 + A_0$. By the definition of the support of a measure, for any ball $B_N(\varepsilon, x_0')$ around $x_0'$ with radius $\varepsilon>0$, we have that $\mu(B_N(\varepsilon, x_0'))>0$. 
 
We choose $\varepsilon>0$ such that $\{ x_0 + A_0 \} \cap B_N(\varepsilon, x_0')= \emptyset$ (since $N^p$ is $C^2$-continuous, the set $A_0$ is closed, and such an $\varepsilon$ exists). 
Then, for any $y \in \overline{B_N}(\varepsilon/2, x_0')=:B_1$, we have $y \notin x_0 + A_0$ and 
\begin{equation}
\label{eq:Tpi_limit}
T(x_0-y)>0.
\end{equation}
Since $B_1$ is compact, \eqref{eq:Tpi_limit} attains its minimum at a point $y_0 \in B_1$, at which we still have $T(x_0- y_0)>0$. Thus, since $T(x)$ is a non-negative function, we have that
$$
\mathbb{T}_{\mu}(x_0) \geq \int_{B_1}\ T(x_0 -y)d\mu(y) \geq T(x_0-y_0)\mu(B_1)>0.
$$
Therefore, if there exists $x_0 \in \Rn $ with  $\mathbb{T}_{\mu}(x_0)=0$, then $\mu$ is supported on the set $\{x_0 + A_0 \}$. 

For the second part of the lemma, consider a measure $\mu$ supported on $x_0 + L$. Then, if $x, y \in x_0 + L$, since by linearity $x-y \in A_0$, we have that $T(x-y)=0$ and 
$$
\mathbb{T}_{\mu}(x)=\int_{x_0 + L}T(x-y)d\mu(y)=0
$$
for any $x \in x_0 +  L$.
\end{proof}

\begin{lemma}
\label{lem: support_2}
Consider for any $x_0 \in \Rn$ a proper maximal affine subset $x_0 + L \subset \{x_0 +A_0 \}$, in the sense that, for every $y \notin L$, we have that $\text{span}(\{L, y\}) \nsubseteq A_0$. If $\mu\in \mathcal{W}_p(\Rn, d_N)$ is a measure supported on $x_0 + L$ and $\Phi: \mathcal{W}_p(\Rn, d_N) \to \mathcal{W}_p(\Rn, d_N)$ is an isometry fixing Dirac masses, then $\Phi(\mu)$ is also supported on $x_0 + L$. 
\end{lemma}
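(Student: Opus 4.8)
The plan is to reduce the statement to Lemma~\ref{lem: support} together with the scaling invariance of $A_0$. Since $\mu$ is supported on the affine subspace $x_0+L\subseteq x_0+A_0$ and $L$ is a proper linear subspace, the second part of Lemma~\ref{lem: support} gives $\mathbb{T}_{\mu}(x)=0$ for every $x\in x_0+L$. As $\Phi$ fixes Dirac masses we have $\mathcal{T}^{(p)}_{\mu}\equiv\mathcal{T}^{(p)}_{\Phi(\mu)}$, hence $\mathbb{T}_{\Phi(\mu)}(x)=\mathbb{T}_{\mu}(x)=0$ for every $x\in x_0+L$. Applying the first part of Lemma~\ref{lem: support}, with each such $x$ in the role of the base point, I obtain that $\Phi(\mu)$ is supported on $x+A_0$ for every $x\in x_0+L$, and therefore on the intersection
$$\bigcap_{x\in x_0+L}(x+A_0).$$

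It remains to prove the geometric identity $\bigcap_{x\in x_0+L}(x+A_0)=x_0+L$. The inclusion ``$\supseteq$'' is immediate, since for $x,x'\in x_0+L$ one has $x'-x\in L=-L\subseteq A_0$. For ``$\subseteq$'', let $z$ belong to the intersection and put $w=z-x_0$; then $w-\ell\in A_0$ for all $\ell\in L$, i.e.\ the whole affine subspace $w+L$ lies in $A_0$. Here I would use that $A_0$ is a cone: because $\mathrm{Hess}\,N^p$ is $(p-2)$-homogeneous and $N^p$ is even, $T(\lambda x)=|\lambda|^{p-2}T(x)$, so $A_0$ is stable under multiplication by every real scalar. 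Consequently $tw+L=t(w+L)\subseteq tA_0=A_0$ for every $t\neq0$, while $L\subseteq A_0$ for $t=0$, so $\mathrm{span}(\{L,w\})=\bigcup_{t\in\R}(tw+L)\subseteq A_0$. By the maximality of $x_0+L$ this is impossible unless $w\in L$, whence $z\in x_0+L$. Combining, $\supp(\Phi(\mu))\subseteq x_0+L$, as claimed.

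The only delicate points are bookkeeping ones: applying Lemma~\ref{lem: support} simultaneously at every point of $x_0+L$ rather than at a single base point, and checking that $A_0$ is stable under \emph{negative} scalings (which uses $N(-x)=N(x)$) so that the union $\bigcup_{t\in\R}(tw+L)$ really fills out the linear span $\mathrm{span}(\{L,w\})$ and the maximality hypothesis can be invoked. Everything else is a direct translation of the two halves of Lemma~\ref{lem: support}.
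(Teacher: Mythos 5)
Your proof is correct and follows essentially the same route as the paper's: apply the second part of Lemma \ref{lem: support} to get $\mathbb{T}_{\mu}\equiv 0$ on $x_0+L$, transfer this to $\Phi(\mu)$ via the potential functions, apply the first part at every point of $x_0+L$, and then use the homogeneity of $A_0$ together with the maximality of $L$ to force $\supp(\Phi(\mu))\subseteq x_0+L$. Your packaging of the last step as the set identity $\bigcap_{x\in x_0+L}(x+A_0)=x_0+L$, and your explicit check that $A_0$ is stable under negative scalings (via $N(-x)=N(x)$), are slightly cleaner than the paper's contradiction argument but mathematically identical to it.
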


\begin{proof}
Consider $\mu$ a measure supported on $x_0 + L$. Then, by the previous Lemma, $\mathbb{T}_{\mu}(x)=0$ for all $x \in x_0 +L$. By the same Lemma, since we have $\mathbb{T}_{\mu}(x)=\mathbb{T}_{\Phi(\mu)}(x)$, we get that $\Phi(\mu)$ is supported on the set $\{x + A_0 \}$ for any $x \in x_0+ L$. 

Assume for contradiction that there exists $x_1 \in \supp(\Phi(\mu))$ such that $x_1 \notin x_0 + L$. Then, for any $x \in x_0 + L$, we still have that $x_1-x \in A_0$. Consider $y \in \text{span}(L, x_1-x_0)$, $y \notin L$ and write $ y=\lambda_1 (x_1-x_0) - \lambda_2 x_2$ for $\lambda_0, \lambda_1 \in \R$, $\lambda_0\neq 0$ and $x_2 \in L$.  We can rewrite this as $y = \lambda_0 ((x_1-x_0) -(\frac{\lambda_1}{\lambda_0} x_1))$, and thus $y \in A_0$. But then $x_0 + \text{span}(\{L, x_1-x_0\}) \subset \{x_0 +A_0\}$, contradicting the maximality of the subset $L$.   

\end{proof}

Notice that we can always find a proper maximal subset. Indeed, by Lemma \ref{prop: non-constant derivative}, $A_0$ is a proper subset of $\Rn$, and thus a maximal set is also proper. Furthermore, since $A$ is not empty, we have $L_1= x_0 + tx_1\subset x_0 +A_0$ for any $x_1 \in A$. Then, if $L_1$ is not maximal, there exists $y \notin L_1$  such that $\text{span}(\{L_1, y\}) \subset x_0 +A_0$, and we consider the set $L=\text{span}(\{L_1, y \})$. If $L$ is not maximal, we proceed the same way, until we find a set $L' $ that is maximal. 

Therefore this lemma also proves Proposition \ref{prop: Hessian_restrict}. 

\begin{proof}[Proof of Theorem \ref{thm1} if $p>2$] Using Proposition \ref{prop: Hessian_restrict}, we now have  that, for any norm $N$, if $p> 2$, there exists a proper linear subspace $L$  such that for every $x_0 \in \Rn$, if a measure $\nu$ is supported on $L_{x_0}:=x_0 + L$, then $\Phi(\nu)$ is also supported on $L_{x_0}$. Note that for $p=2$, this is in general not true, as the example in Figure \ref{fig:M3} and Remark \ref{rem: generap_p=2} will show. 

 We first look at the case $n=2$. Since $L$ is proper, $L$ is a line and $(L_{x_0}, d_{N, L_{x_0}})$ is isometric to $(\R, |\cdot|)$, thus we can use \cite{GTV1} to assume that for any measure $\nu$ supported on $L_{x_0}$, $\Phi(\nu)=\nu$. By Lemma \ref{lem: Chebychev}, the set $P_{L}^{-1}(0)$ is again a line, and $P_{L}^{-1}(0)=H$, the linear subspace given by Lemma \ref{lem: restrict_image}. By Lemma \ref{lem: projection commutates}, we know that $P_{L\#}(\mu)=P_{L\#}(\Phi(\mu))$, which implies in particular that for every measure $\nu$ supported on $H$, we have that $P_{L\#}\nu=\delta_{0}$, which implies that $\Phi(\nu)$ is also supported on $H$, and we can again assume that $\Phi(\nu)=\nu$ on when $\nu$ is supported on $H$. Since the norm $N$ is strictly convex, it projects uniquely onto $L$ and $H$, and we can apply Proposition \ref{prop: Hyper implies hyper plus}. Then we have rigidity of the Wasserstein space $\mathcal{W}_p(\R^2, d_N)$ for $p>2$.

We now make the induction assumption on $n$, that for any $n \leq n_0$, the space $\mathcal{W}_p(\Rn, d_N)$ is rigid for $p \neq 2$ when $N$ is a $C^2$ strictly convex norm, and we consider the Wasserstein space $\mathcal{W}_p(\R^{n_0+1}, d_N)$. 

For $L$ given by Proposition \ref{prop: Hessian_restrict}, since $L$ is a proper linear subspace of $\R^{n_0+1}$, $(L_{x_0}, d_{N, L_{x_0}})$ is isometric to $(\Rn, d_{N, L_{x_0}})$ for some $n < n_0+1$. Since for any measure $\nu$ supported on $L_{x_0}$ we have that $\Phi(\nu)$ is also supported on $L_{x_0}$, we can use the induction assumption to assume that $\Phi(\nu)=\nu$ for any measure $\nu$ supported on $L_{x_0}$. 

We take the linear subspace $H \subset P_{L}^{-1}(0)$ given by Lemma \ref{lem: restrict_image}. Then, if $\nu$ is a measure supported on $H$, again by Lemma \ref{lem: restrict_image} we have that $\text{supp}(\Phi(\nu))\subset H + \text{supp}(\nu)=H$, and as above we can use the induction argument to assume that $\Phi(\nu)=\nu$ whenever $\nu$ is supported on $H$. Then, by Proposition \ref{prop: Hyper implies hyper plus} we have rigidity of the space $\mathcal{W}_p(\R^{n_0+1}, d_N)$.
\end{proof}

\subsection{Special case: $p=2$, $N=l_q$, $q>2$}

Note that in the proof of Theorem \ref{thm1}, we only use in a few places the fact that $p\neq 2$. In particular, we used in Proposition \ref{prop: non-constant derivative} that $\partial_{ij}N^p$ is $(p-2)-$homogeneous to show that $\partial_i^2N^p(e_2)=0$ if the mixed derivatives all vanish. In the definition of $T$, the $(p-2)-$homogeneity again allows us to define $T(0)$. Finally, in the proof of Theorem \ref{thm1}, we use that in \cite{GTV1}, the authors showed rigidity of the Wasserstein space over the real line when $p \neq 2$. 

\begin{remark}
\label{rem: generap_p=2}
In general, we know that these limitations are strict, since we have for $p=2$ examples of non-rigid Wasserstein spaces $\mathcal{W}_2(\Rn, d_N)$, for instance if the norm $N$ is the standard Euclidean norm.
\end{remark}
For example, in the case of $\R^2$, we can take the shape-preserving isometry which rotates measures around their center of mass with an angle of $\frac{\pi}{4}$, according to the construction given in Proposition 6.1 of Kloeckner \cite{K}. Then, even if this isometry preserves Dirac masses, any measure supported on a line will be sent to a measure supported on a different line, namely on the line rotated by $\pi/4$, as seen in Figure \ref{fig:M3}. Thus we cannot even hope to find a replacement of Proposition \ref{prop: Hessian_restrict} for general $C^2$-norms. 

Nevertheless, in the case of the $l_q$ norms with $q>2$, we can adapt the argument of the proof of Theorem \ref{thm1} to show the rigidity of the Wasserstein space $\mathcal{W}_2(\Rn, d_q)$. Indeed, for the $l_q$ norm given by  
$$
N_q(x)=\left(\sum_{i=1}^n |x_i|^q \right)^{\frac{1}{q}},
$$ since $q>2$, we have that $N_q$ is $C^2$-smooth and the second derivatives are given by 
$$
\partial_i^2N^2_q(x)=2(2-q)(N(x))^{2-2q} |x_i|^{2q-2} + 2 (q-1)(N(x))^{2-q}|x_i|^{q-2}.
$$

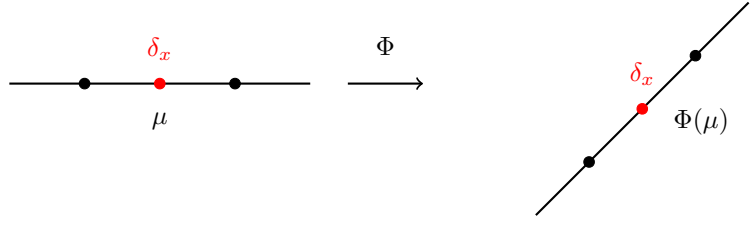
\begin{figure}
\centering

\begin{tikzpicture}[scale=1, every node/.style={font=\small}]

  \draw[thick] (0,0) -- (4,0);
  \filldraw[black] (1,0) circle (2pt) node[below=5pt] {};
  \filldraw[red] (2,0) circle (2pt) node[above=5pt] {$\delta_x$};
  \filldraw[black] (3,0) circle (2pt) node[below=5pt] {};
  \node at (2,-0.5) {$\mu$};

  \draw[->, thick] (4.5,0) -- (5.5,0);
  \node at (5, 0.5) {$\Phi$};
  
  \begin{scope}[shift={(7,-1.75)}, rotate=45]
    \draw[thick] (0,0) -- (4,0); 
    \filldraw[black] (1,0) circle (2pt) node[right=5pt] {};
    \filldraw[red] (2,0) circle (2pt) node[above=5pt] {$\delta_x$};
    \filldraw[black] (3,0) circle (2pt) node[right=5pt] {};
  \end{scope}
  \node at (9.2, -0.5) {$\Phi(\mu)$};

\end{tikzpicture}
\caption{In the Eucliden plane, there exists for $p=2$ shape-preserving isometries of $\mathcal{W}_2(\R^2, d_E)$ that send a measure supported on a line to a measure supported on a different line.}
\label{fig:M3}
\end{figure} 
In particular, we have $\partial_i^2N^2_q(x) \geq 0$ for $x \in \Rn \setminus \{0\}$, with equality if and only if $x_i=0$. Thus, instead of using Proposition \ref{prop: non-constant derivative} to define $T$, we can simply choose $i \leq n$, and set $T(x)=\partial_i^2N^p(x)$, which has the property that $T(x)=0$ if $x_i=0$, $x \neq 0$ and $ T(x)>0$ for $x_i \neq 0$. While $T$ is not necessarily defined at $x=0$, the proofs of Lemmata \ref{lem: support} and \ref{lem: support_2} still hold for measures that are absolutely continuous with respect to the Lebesgue measure of a subspace $L$ of dimension $k \geq 1$. Indeed, we then have that $\int_{\Rn}T(x-y)d\mu(y)=\int_{\Rn \setminus \{x\}}T(x-y)d\mu(y)$, which is well defined. Thus Proposition \ref{prop: Hessian_restrict} holds for measures that are absolutely continuous on subspaces; since a hypersurface of $\Rn$ is closed and since absolutely continuous measures are dense in  $\mathcal{W}_2(\Rn, d_q)$, we can analogously to Proposition \ref{prop: Hessian_restrict} state the following Proposition:
\begin{proposition}
\label{prop: restrict_lq}
Consider $q >2$, and $d_q$ the metric induced by the $l_q$ norm. For any $i \leq n$, consider the hypersurface $L_i$ given by $L_i = \{x \in \Rn | x_i=0\}$. If $\mu$ is a measure supported on $x_0 + L_i$, then for any isometry $\Phi$ of the Wasserstein space $\mathcal{W}_2(\Rn, d_q)$ fixing Dirac masses, $\Phi(\mu)$ will also be supported on $x_0 +L_i$.  
\end{proposition}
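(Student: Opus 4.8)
The plan is to re-run the mechanism behind Lemmata~\ref{lem: support}--\ref{lem: support_2}, but using the single second partial derivative $T(x):=\partial_i^2 N_q^2(x)$ instead of the quadratic form coming from Proposition~\ref{prop: non-constant derivative}, and to handle the fact that $T$ is undefined at the origin by first treating only measures that are absolutely continuous on the hyperplane. Write $H:=x_0+L_i=\{y\in\Rn:y_i=(x_0)_i\}$; note $x_0\in H$ since $L_i$ is a linear subspace. The restricted metric space $(H,d_q)$ is isometric to $(\R^{n-1},N_q)$, so probability measures on $H$ that are absolutely continuous with respect to $(n-1)$-dimensional Lebesgue measure (and have finite second moment) are $\mathcal{W}_2$-dense among all measures supported on $H$, and $H$ is closed in $\Rn$. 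Hence, once the proposition is proved for all such $\mu$, a general $\mu$ supported on $H$ is dealt with by choosing absolutely continuous approximants $\mu_k\to\mu$ in $\mathcal{W}_2$: then $\Phi(\mu_k)\to\Phi(\mu)$ in $\mathcal{W}_2$ with every $\Phi(\mu_k)$ supported on the closed set $H$, so $\Phi(\mu)$ is supported on $H$ as well. From now on $\mu$ is absolutely continuous on $H$; in particular $\mu(\{x_0\})=0$.

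Next I record the properties of $T$ and differentiate the potential. For $q>2$, $N_q^2$ is $C^2$ on $\Rn\setminus\{0\}$, and the second-derivative formula quoted above can be rewritten as
$$
\partial_i^2 N_q^2(x)=2\,|x_i|^{q-2}\,N_q(x)^{2-2q}\Big[(2-q)|x_i|^{q}+(q-1)N_q(x)^{q}\Big].
$$
Since $N_q(x)^q=\sum_j|x_j|^q\ge|x_i|^q$ and $q-1>0$, the bracket is $\ge|x_i|^q$, so $T\ge0$ on $\Rn\setminus\{0\}$ and $T(x)=0$ exactly when $x_i=0$. Because $N_q^2$ is $2$-homogeneous, $(\text{Hess}N_q^2)$ is $0$-homogeneous and hence bounded in operator norm on $\Rn\setminus\{0\}$ by a constant $C$. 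Now $\mathcal{T}^{(2)}_{\mu}(x)=\int_{\Rn}N_q^2(x-y)\,\dd\mu(y)$, so with $g_y(t):=N_q^2(x_0-y+t e_i)$,
$$
\frac{\mathcal{T}^{(2)}_{\mu}(x_0+h e_i)-2\mathcal{T}^{(2)}_{\mu}(x_0)+\mathcal{T}^{(2)}_{\mu}(x_0-h e_i)}{h^2}=\int_{\Rn}\frac{g_y(h)-2g_y(0)+g_y(-h)}{h^2}\,\dd\mu(y),
$$
and the integrand is non-negative by convexity of $N_q^2$. For $\mu$-a.e.\ $y$ (i.e.\ $y\in H\setminus\{x_0\}$) the vector $x_0-y$ lies in $L_i\setminus\{0\}$, hence is not a multiple of $e_i$, so the line $x_0-y+\R e_i$ misses $0$; thus $g_y\in C^2(\R)$ with $g_y''\le C$, the integrand is $\le C$, and it tends to $g_y''(0)=T(x_0-y)=0$ as $h\to0$. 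Dominated convergence gives that the left-hand side tends to $\int_{\Rn}T(x_0-y)\,\dd\mu(y)=0$.

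Finally I transfer this to $\nu:=\Phi(\mu)$. Since $\Phi$ fixes Dirac masses, $\mathcal{T}^{(2)}_{\nu}(x)=d^2_{\mathcal{W}_2}(\Phi(\mu),\delta_x)=d^2_{\mathcal{W}_2}(\mu,\delta_x)=\mathcal{T}^{(2)}_{\mu}(x)$ for all $x$, so the same second difference quotient of $\mathcal{T}^{(2)}_{\nu}$ at $x_0$ in direction $e_i$ also tends to $0$. Writing $Q_h(y):=\big(N_q^2(x_0+h e_i-y)-2N_q^2(x_0-y)+N_q^2(x_0-h e_i-y)\big)/h^2\ge0$ and applying Fatou's lemma along any sequence $h_k\to0$,
$$
0=\lim_{k\to\infty}\int_{\Rn}Q_{h_k}\,\dd\nu\ \ge\ \int_{\Rn}\Big(\liminf_{k\to\infty}Q_{h_k}(y)\Big)\,\dd\nu(y)=\int_{\Rn\setminus\{x_0\}}T(x_0-y)\,\dd\nu(y)+2\,\nu(\{x_0\}),
$$
because $Q_{h_k}(y)\to T(x_0-y)$ for $y\ne x_0$ (the map $t\mapsto N_q^2(x_0-y+t e_i)$ being $C^2$ near $0$) while $Q_{h_k}(x_0)=2N_q^2(e_i)=2$. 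Both terms on the right are non-negative, so $\nu(\{x_0\})=0$ and $T(x_0-y)=0$ for $\nu$-a.e.\ $y$; as $T$ vanishes precisely on $\{z:z_i=0\}$, this forces $y_i=(x_0)_i$ for $\nu$-a.e.\ $y$, i.e.\ $\nu(H)=1$, so $\supp\Phi(\mu)\subseteq H$. Together with the density reduction this proves the proposition.

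The main obstacle is genuinely the origin: $T$ cannot be assigned a value there, so Lemmata~\ref{lem: support}--\ref{lem: support_2} do not apply verbatim to $\nu=\Phi(\mu)$. It is circumvented by restricting to measures absolutely continuous on $H$ (so that $\mu$ carries no atom at $x_0$, legitimising the dominated-convergence step) and by phrasing the estimate for $\Phi(\mu)$ through Fatou's lemma, which yields simultaneously that $\Phi(\mu)$ has no atom at $x_0$ and that $T(x_0-\cdot)=0$ $\Phi(\mu)$-almost everywhere. The only computation needed is the uniform bound $g_y''\le C$ together with the observation that for $\mu$-almost every $y$ the line $x_0-y+\R e_i$ avoids $0$.
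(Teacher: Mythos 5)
Your proposal is correct and follows essentially the same route as the paper: reduce to measures absolutely continuous on the hyperplane by density and closedness, take $T=\partial_i^2 N_q^2$ (nonnegative, vanishing exactly on $\{x_i=0\}$, undefined only at the origin), and detect the support of $\Phi(\mu)$ through second difference quotients of the shared potential function $\mathcal{T}^{(2)}_{\mu}=\mathcal{T}^{(2)}_{\Phi(\mu)}$. Your Fatou-lemma step, which simultaneously rules out an atom of $\Phi(\mu)$ at $x_0$ and forces $T(x_0-\cdot)=0$ almost everywhere, is a careful implementation of a detail the paper only asserts when it says the proofs of Lemmata \ref{lem: support} and \ref{lem: support_2} ``still hold'' for absolutely continuous measures.
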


To finish the proof of Theorem \ref{thm2}, we want to again use an induction argument and Proposition \ref{prop: Hyper implies hyper plus}; however, we first need to show that, if an isometry $\Phi$ of the Wasserstein space fixes Dirac masses and globally fixes measures supported on a line, then it acts as the identity on measures supported on the line. 

To see this, consider in $\Rn$ the line $L_1=\{t e_1 | t \in \R \}$ (the other cases can be handled similarly). If an isometry $\Phi: \mathcal{W}_2(\Rn, d_q) \to \mathcal{W}_2(\Rn, d_q)$ globally preserves measures supported on $L_1$, since $(L_1, d_{q,L_1})$ is isometric to $(\R, | \cdot |)$, we can use the result from Kloeckner (Lemma 5.2 \cite{K}) to characterize $\Phi$. Indeed, we recall Kloeckner's notation  
$$
\mu=\mu(x, \sigma, p)=\frac{e^{-p}}{e^{-p} +e^p}\delta_{(x-\sigma e^p)e_1} \frac{e^{p}}{e^{-p} +e^p}\delta_{(x+\sigma e^{-p})e_1}
$$ to represent measures supported on two points on $L_1$, then Lemma 5.2 of \cite{K} gives us that an isometry which fixes Dirac masses and globally fixes $L_1$ acts on measures supported on two points of $L_1$ in the following way: 
$$
\Phi(\mu(x, \sigma, p))=\mu(x, \sigma, \varphi (p)),
$$ 
where $\varphi$  is an isometry of $(\R, |\cdot |)$. It is well known that the isometries of $(\R, |\cdot |)$ are given by $\varphi(x)= s x + t$ for some $t \in \R$ and $s \in \{-1, 1\}$. We will show that if an map $\Phi$ does not leave measures supported on 2 points of $L_1$ invariant (i.e. if $s =-1$ or $t \neq 0$), then $\Phi$ cannot be an isometry of the Wasserstein space $\mathcal{W}_2(\Rn, d_q)$. 

To start, assume by contradiction that the isometry $\Phi^t$ is given by $$\Phi^t(\mu(x, \sigma, p))=\mu(x, \sigma, p + t)$$ for some $t \neq 0$.

If we consider $\mu_0=\mu(0, 1, 0)=\frac{1}{2} \delta_{-e_1} + \frac{1}{2}\delta_{e_1}$, then 
$$
\Phi^t(\mu_0)=\frac{e^{-t}}{e^t + e^{-t}} \delta_{-e^te_1} +\frac{e^{t}}{e^t + e^{-t}} \delta_{e^{-t}e_1}. 
$$ 

We consider the Dirac mass $\nu=\delta_{e_2}$. Since $\Phi^t$ fixes Dirac masses, $\Phi^t(\nu)=\nu$, and 
$$
d^2_{\mathcal{W}_2}(\mu_1, \nu)=d^2_{\mathcal{W}_2}(\Phi^t(\mu_1), \Phi^t(\nu))=d^2_{\mathcal{W}_2}(\Phi^t(\mu_1), \nu). 
$$
We can explicitly calculate both distances to get the following equality 
\begin{align*}
 2^{\frac{2}{q}} =d^2_{\mathcal{W}_2}(\mu_1, \nu)=d^2_{\mathcal{W}_2}(\Phi^t(\mu_1), \nu)= \frac{e^{-t}}{e^t + e^{-t}}(e^{tq}+1)^{\frac{2}{q}} + \frac{e^{t}}{e^t + e^{-t}}(e^{-tq}+1)^{\frac{2}{q}}.
\end{align*}
We can rewrite this equality as 
\begin{align*}
2^{\frac{2}{q}}(e^t + e^{-t}) = (e^{tq/2}+e^{-tq/2})^{\frac{2}{q}} + (e^{-tq/2}+e^{tq/2})^{\frac{2}{q}}=2(e^{-tq/2}+e^{tq/2})^{\frac{2}{q}}
\end{align*} which is equivalent to 
\begin{align*}
\frac{1}{2}(e^{qt/2})^{\frac{2}{q}} + \frac{1}{2}(e^{-qt/2})^{\frac{q}{2}} =(\frac{1}{2}e^{-tq/2}+\frac{1}{2}e^{tq/2})^{\frac{2}{q}}.
\end{align*} 
Setting $A:=e^{\frac{tq}{2}}1$, the equation is written 
$$
\frac{1}{2}A^{\frac{2}{q}} + \left(\frac{1}{A} \right)^{\frac{2}{q}}=\left( \frac{1}{2}A + \frac{1}{2}\frac{1}{A} \right)^{\frac{2}{q}}. 
$$
But, since the function $s \to s^{\frac{2}{q}}$ is strictly concave when $q>2$, the equality can only hold if $A=\frac{1}{A}$, i.e. $e^{\frac{qt}{2}}=1$, which only holds for $t=0$, giving the contradiction.

Now assume by contradiction that the isometry $\Phi^*$ is given by $$\Phi^*: \mu(x,\sigma,p) \to \mu(x,\sigma,-p),$$ and consider $$\mu_1=\mu(0, \sqrt{2}, -\ln(2)/2)=\frac{2}{3}\delta_{-e_1}+\frac{1}{3}\delta_{2e_1}.$$ Then the image of $\mu_1$ is given by
$$
\Phi^*\ler{\mu_1}=\Phi^*\ler{\mu(0,\sqrt{2}, -\ln(2)/2)}=\mu(0,\sqrt{2}, \ln(2)/2)=\frac{1}{3}\delta_{-2e_1}+\frac{2}{3}\delta_{e_1}.
$$
We consider the Dirac mass $\nu=\delta_{e_1 + e_2}$ and, since we still have $\Phi^*(\nu)=\nu$, we have 
$$
3d^2_{\mathcal{W}_2}(\mu_1, \nu)=2\ler{2^q+1}^{2/q}+ 2^{2/q} = 2+ \ler{3^q+1}^{2/q}=  3d_{W_2}^2(\Phi^*(\mu_1),\nu)
$$
We can rewrite the left side as
\begin{align} \label{eq:mu-nu-dist}
    \frac{3}{4} d_{W_2}^2(\mu_1,\nu)=\ler{2^{-q/2}}^{2/q}\ler{2^q+2^0}^{2/q}+2^{2/q-2}
    =\ler{2^{q/2}+2^{-q/2}}^{2/q}+2^{2/q-2}
\end{align}
and  the right side as 
\begin{align} \label{eq:Phi-mu-nu-dist}
    \frac{3}{4} d_{W_2}^2(\Phi^*(\mu_1),\nu)
    =2^{-1}+\frac{1}{4}\ler{3^q+3^0}^{2/q}
    =2^{-1}+\ler{2^{-q}}^{2/q}\ler{3^q+3^0}^{2/q} \nonumber \\
    =2^{-1}+\ler{\ler{\frac{3}{2}}^q+\ler{\frac{1}{2}}^q}^{2/q}
    =2^{-1}+\ler{\ler{\frac{9}{4}}^{q/2}+\ler{\frac{1}{4}}^{q/2}}^{2/q}.
\end{align}
Let us compare the right-hand sides of \eqref{eq:mu-nu-dist} and \eqref{eq:Phi-mu-nu-dist}. Clearly, $2^{2/q-2}<2^{-1}$ if $q>2,$ and the key observation is that there exists a $\lambda \in (0,1)$ such that
\begin{align}
    2^{-1}=\lambda \cdot \frac{1}{4}+(1-\lambda)\cdot \frac{9}{4} \text{ and } 2=(1-\lambda) \cdot \frac{1}{4}+\lambda\cdot \frac{9}{4}.  
\end{align}
(The exact value of $\lambda$ is $\lambda=7/8,$ but this is not important.)
Therefore, 
\begin{align} \label{eq:convexity}
    2^{q/2}+2^{-q/2}
    =\ler{(1-\lambda) \cdot \frac{1}{4}+\lambda\cdot\frac{9}{4}}^{q/2}
    +\ler{\lambda \cdot \frac{1}{4}+(1-\lambda)\cdot\frac{9}{4}}^{q/2} \leq \nonumber \\
    \leq (1-\lambda) \cdot \ler{\frac{1}{4}}^{q/2} + \lambda\cdot\ler{\frac{9}{4}}^{q/2}
    +\lambda \cdot \ler{\frac{1}{4}}^{q/2}+(1-\lambda)\cdot\ler{\frac{9}{4}}^{q/2}
    =\ler{\frac{1}{4}}^{q/2}+\ler{\frac{9}{4}}^{q/2}
\end{align}
where we used the convexity of the function $s \mapsto s^{q/2}$ on $[0,\infty).$ The map $t \mapsto t^{2/q}$ is monotone increasing for any $q>0$ on $[0,\infty),$ and hence \eqref{eq:convexity} implies that the the right-hand side of \eqref{eq:mu-nu-dist} is smaller than that of \eqref{eq:Phi-mu-nu-dist}. That is, $d_{W_2}(\mu_1,\nu)<d_{W_2}(\Phi^*(\mu_1),\nu)=d_{W_2}(\Phi^*(\mu_1),\Phi^*(\nu)),$ which shows that $\Phi^*$ is not an isometry. 
\par
Note that if $q<2,$ than the inequality \eqref{eq:convexity} is reversed by the concavity of $s \mapsto s^{q/2},$ and $2^{2/q-2}>2^{-1},$ so in this case one gets $d_{W_2}(\mu_1,\nu)>d_{W_2}(\Phi^*(\mu_1),\nu)=d_{W_2}(\Phi^*(\mu_1),\Phi^*(\nu))$ which rules out $\Phi^*$ in the case $q<2$ as well.

These proofs show that, if $\Phi$ is an isometry that fixes Dirac masses  and sends measures supported on $L_1$ on measures supported on $L_1$, then $\Phi$ acts as the identity on the space $\mathcal{W}_2(L_1, |\cdot|)$, i.e. $\Phi(\mu)=\mu$ for any measure $\mu$ supported on $L_1$. Using this result, we now prove Theorem \ref{thm2}:

\begin{proof}[Proof of Thm \ref{thm2}]
We again start with the case $n=2$. By Proposition \ref{prop: restrict_lq}, we know that if a measure is supported on a translated canonical axis and $\Phi$ is an isometry of $\mathcal{W}_2(\R^2, d_q)$, then $\Phi(\nu)$ is supported on the same translated canonical axis. By the argument presented above, we can also assume that $\Phi(\nu)=\nu$ for any measure supported on $x_0 + L_i$, with $x_0 \in \R^2$, $i\in \{1, 2\}$. Since the norm $l_q$ is strictly convex, as $q>2$, it projects uniquely onto $L_1$ and $L_2$; furthermore, it is easy to verify that the projection of $L_2$ onto $L_1$ is simply $P_{L_1}(L_2)=\{0\}$. Thus we can apply Proposition \ref{prop: Hyper implies hyper plus} to prove the rigidity of the Wasserstein space $\mathcal{W}_2(\R^2, d_q)$. 

We now make the induction assumption on $n$, that for any $n \leq n_0$, the space $\mathcal{W}_2(\Rn, d_q)$ is rigid, and we consider the Wasserstein space $\mathcal{W}_p(\R^{n_0+1}, d_q)$. 

By Proposition \ref{prop: restrict_lq}, if a measure $\nu$ is supported on $x_0 + L_i$ for $x_0 \in \R^{n_0+1}, i \in \{1, \dots, n_0+1\}$ (where $L_i$ is the hyperplane given by $x_i=0$), then $\Phi(\nu)$ will also be supported on $x_0 + L_i$. Since the space $(x_0 + L_i, d_{q, x_0 + L_i})$ is isometric to $(\R^{n_0}, d_q)$, we can use the induction assumption to show that $\Phi(\nu)=\nu$ for any measures $\nu$ supported on $x_0 + L_i$.

 Since the line $H_i=\{te_i\}$ is the intersection of all hyperplanes $L_j$ with $j \neq i$, if a measure $\mu$ is supported on $H_i$, we can use Proposition \ref{prop: restrict_lq} to show that its image $\Phi(\mu)$ is also supported on every hyperplane $L_j$ with $j \neq i$. Thus its image is again supported $H_i$, and by the argument presented above, we can assume that $\Phi(\mu)=\mu$ for any measures $\mu$ supported on $H_i$. Since we still have that the $l_q$ norm projects uniquely onto $L_i$ and $H_i$, and since it is easy to see that $P_{L_i}^{-1}(0)=H_i$, we can apply Proposition \ref{prop: Hyper implies hyper plus} to prove the rigidity of the Wasserstein space $\mathcal{W}_2(\R^{n_0+1}, d_q)$. 
\end{proof}

	\end{document}